\documentclass[leqno]{amsart}
\usepackage[T1]{fontenc}
\usepackage[latin9]{inputenc}
\usepackage{verbatim}
\usepackage{amstext}
\usepackage{amsthm}
\usepackage{amssymb}

\makeatletter
\numberwithin{equation}{section}
\numberwithin{figure}{section}
\theoremstyle{plain}
\newtheorem*{thm*}{\protect\theoremname}
\theoremstyle{plain}
\newtheorem{thm}{\protect\theoremname}
\theoremstyle{plain}
\newtheorem{lem}[thm]{\protect\lemmaname}
\theoremstyle{plain}
\newtheorem{prop}[thm]{\protect\propositionname}

\@ifundefined{date}{}{\date{}}


\providecommand{\lemmaname}{Lemma}

\providecommand{\theoremname}{Theorem}

\makeatother

\providecommand{\lemmaname}{Lemma}
\providecommand{\propositionname}{Proposition}
\providecommand{\theoremname}{Theorem}

\begin{document}
\title{Local and global analyticity for a generalized Camassa-Holm system}
\author{Hideshi Yamane}
\curraddr{{\small{}{}Department of Mathematical Sciences, Kwansei Gakuin University
}\\
{\small{}{}Gakuen 2-1 Sanda, Hyogo 669-1337, Japan}}
\email{{\small{}{}yamane@kwansei.ac.jp}}
\begin{abstract}
We solve the analytic Cauchy problem for the generalized two-component
Camassa-Holm system introduced by R. M. Chen and Y. Liu. We show the
existence of a unique local/global-in-time analytic solution under
certain conditions. This is the first result about global analyticity
for a Camassa-Holm-like system. The method of proof is basically that
developed by Barostichi, Himonas and Petronilho. The main differences
of their proof and ours are twofold: \\
(i) the system of Chen and Liu is not symmetic in the two unknowns
and our estimates are not trivial generalization of those in their
articles,  \\
(ii) we have simplified their argument by using fewer function spaces
and the main result is stated in a simple and natural way.\\
AMS subject classification: 35A01, 35A10, 35G55, 35Q35
\end{abstract}

\keywords{Ovsyannikov theorem, Camassa-Holm equation, analytic Cauchy problem,
global solvability}

\maketitle
\markboth{Hideshi Yamane}{Camassa-Holm equations}\tableofcontents

\section*{Introduction}

We consider the generalized two-component CH system
\begin{equation}
\begin{cases}
 & m_{t}-\alpha u_{x}+\beta(2mu_{x}+um_{x})+3(1-\beta)uu_{x}+\rho\rho_{x}=0,\,m=u-u_{xx},\\
 & \rho_{t}+(\rho u)_{x}=0.
\end{cases}\label{eq:2CHrho}
\end{equation}

It was introduced in \cite{ChenLiu} as a model of shallow water waves
and is equivalent to 
\begin{equation}
\begin{cases}
 & u_{t}-u_{txx}-\alpha u_{x}+3uu_{x}-\beta(2u_{x}u_{xx}+uu_{xxx})+\rho\rho_{x}=0,\\
 & \rho_{t}+(\rho u)_{x}=0.
\end{cases}\label{eq:2CHrho-1}
\end{equation}
Here it is assumed that $u\to0$ and $\rho\to1$ hold as $|x|\to\infty$.
It is natural to introduce $v=\rho-1$, which tends to 0 as $|x|\to0$.
The system \eqref{eq:2CHrho} with $u\to0,\rho\to1$ is equivalent
to
\begin{equation}
\begin{cases}
 & (1-\partial_{x}^{2})u_{t}-\alpha u_{x}+3uu_{x}-\beta(2u_{x}u_{xx}+uu_{xxx})+(1+v)v_{x}=0,\\
 & v_{t}+u_{x}+(uv)_{x}=0,
\end{cases}\label{eq:2CHr}
\end{equation}
with  $u\to0$ , $v\to0$ as $|x|\to\infty$. Applying $(1-\partial_{x}^{2})^{-1}$
to the first equation, we get, by using $\partial_{x}^{2}(uu_{x})=uu_{xxx}+3u_{x}u_{xx}$,
\begin{equation}
\begin{cases}
 & u_{t}+\beta uu_{x}+(1-\partial_{x}^{2})^{-1}\partial_{x}\left[-\alpha u+\dfrac{3-\beta}{2}u^{2}+\dfrac{\beta}{2}u_{x}^{2}+v+\dfrac{1}{2}v^{2}\right]=0,\\
 & v_{t}+u_{x}+(uv)_{x}=0.
\end{cases}\label{eq:2CHr-1}
\end{equation}
In the sections below, we shall mainly consider \eqref{eq:2CHr-1}
rather than \eqref{eq:2CHrho-1} and \eqref{eq:2CHr}.

The main result of \cite{ChenLiu} is the following global unique
solvability theorem in the Sobolev spaces. 
\begin{thm*}
(\cite[Theorems 3.1, 5.1]{ChenLiu}) Assume $0<\beta<2$, $s>3/2$.
If $(u_{0},v_{0})\in H^{s}(\mathbb{\mathbb{R}})\times H^{s-1}(\mathbb{\mathbb{R}})$
and $\inf_{x\in\mathbb{R}}v_{0}(x)>-1$, then the Cauchy problem for
the system \eqref{eq:2CHr-1} with $u(0,x)=u_{0}$, $v(0,x)=v_{0}$
has a unique solution $(u,v)$ in the space $\mathcal{C}([0,\infty),H^{s}(\mathbb{\mathbb{R}})\times H^{s-1}(\mathbb{\mathbb{R}}))\cap\mathcal{C}^{1}([0,\infty),H^{s-1}(\mathbb{\mathbb{R}})\times H^{s-2}(\mathbb{\mathbb{R}}))$. 
\end{thm*}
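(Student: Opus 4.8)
My plan follows the standard route for Camassa--Holm-type systems: establish local well-posedness in $H^{s}\times H^{s-1}$, $s>3/2$, by a regularization/compactness argument, then upgrade it to a global solution using a conserved energy, sign-preservation of $\rho=1+v$, and a blow-up criterion. For the local theory I would write $U=(u,v)^{\top}$ and regard \eqref{eq:2CHr-1} as a quasilinear system $U_{t}+A(U)\,\partial_{x}U=F(U)$, with transport coefficients $\beta u$ in the first slot and $u$ in the second; since the Fourier multiplier $(1-\partial_{x}^{2})^{-1}\partial_{x}$ has order $-1$ and $H^{s-1}$ is a Banach algebra for $s>3/2$, $F$ is a locally Lipschitz map $H^{s}\times H^{s-1}\to H^{s-1}\times H^{s-2}$. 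Then introduce a parabolic (or Friedrichs-mollifier) regularization, solve it in $H^{\infty}$ by ODE theory, and derive $\varepsilon$-uniform bounds: apply $\Lambda^{s}$ and $\Lambda^{s-1}$, with $\Lambda:=(1-\partial_{x}^{2})^{1/2}$, to the two equations, pair with $\Lambda^{s}u$ and $\Lambda^{s-1}v$ in $L^{2}$, and control the commutators $[\Lambda^{s},u]\partial_{x}u$, $[\Lambda^{s-1},u]\partial_{x}v$ and the nonlinear products by the Kato--Ponce and Moser inequalities. This yields $\frac{d}{dt}(\|u\|_{H^{s}}^{2}+\|v\|_{H^{s-1}}^{2})\le C\Phi(\|u\|_{H^{s}},\|v\|_{H^{s-1}})$ with $\Phi$ polynomial, hence a uniform existence time and uniform bounds; Aubin--Lions (or the Bona--Smith technique, for strong continuity) passes to the limit, uniqueness comes from an $L^{2}\times H^{-1}$ estimate for the difference of two solutions, the $\mathcal{C}^{1}$-in-time regularity is immediate from the equations, and a continuation argument gives a maximal interval $[0,T_{\max})$.

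Next I would record two facts. First, a short computation --- pair the first equation of \eqref{eq:2CHr} with $u$ in $L^{2}$ (all its dispersive and transport terms are exact $x$-derivatives and integrate to zero) and the second with $v$, so that the coupling terms cancel --- shows that $\mathcal{E}(t):=\|u(t)\|_{H^{1}}^{2}+\|v(t)\|_{L^{2}}^{2}$ is conserved. Second, letting $q(t,\cdot)$ be the flow $q_{t}=u(t,q)$, $q(0,x)=x$, which (since $u(t,\cdot)\in H^{s}\hookrightarrow C^{1}$) is an increasing diffeomorphism with $q_{x}(t,x)=\exp\!\left(\int_{0}^{t}u_{x}(\tau,q(\tau,x))\,d\tau\right)>0$, the equation $\rho_{t}+(\rho u)_{x}=0$ gives $\frac{d}{dt}\bigl[(\rho\circ q)\,q_{x}\bigr]=0$, hence $(\rho\circ q)\,q_{x}=\rho_{0}=1+v_{0}$; since $\inf v_{0}>-1$ and $q_{x}>0$, this forces $\rho(t,\cdot)>0$, i.e. $\inf_{x}v(t,x)>-1$, throughout $[0,T_{\max})$ --- the hypothesis propagates --- and moreover $v\circ q=\rho_{0}/q_{x}-1$, so $\|v(t)\|_{L^{\infty}}$ is controlled by $\int_{0}^{t}\|u_{x}\|_{L^{\infty}}\,d\tau$.

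To reach $T_{\max}=\infty$ I would first establish a blow-up criterion: organizing the $H^{s}\times H^{s-1}$ estimate so that the only uncontrolled factor is $\int_{0}^{t}\|u_{x}(\tau)\|_{L^{\infty}}\,d\tau$ (the nonlocal term being harmless by the order $-1$ smoothing), one shows that if $T_{\max}<\infty$ then $\int_{0}^{T_{\max}}\|u_{x}\|_{L^{\infty}}\,d\tau=\infty$. Differentiating the first equation of \eqref{eq:2CHr-1} in $x$ and using $\partial_{x}^{2}(1-\partial_{x}^{2})^{-1}=(1-\partial_{x}^{2})^{-1}-1$ gives $u_{xt}+\beta u\,u_{xx}=-\tfrac{\beta}{2}u_{x}^{2}+\mathcal{G}$, where each term of $\mathcal{G}$ is bounded in $L^{\infty}$ by a function of $\|u\|_{H^{1}}$, $\|v\|_{L^{2}}$, $\|v\|_{L^{\infty}}$ --- in particular the quadratic nonlocal contribution is $\le\tfrac{1}{2}\|u_{x}\|_{L^{2}}^{2}$, controlled by $\mathcal{E}$. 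Hence, along the flow $\widetilde{q}$ of $\beta u$, the quantity $N=u_{x}\circ\widetilde{q}$ obeys $|N'+\tfrac{\beta}{2}N^{2}|\le C$ with $C$ depending only on $\mathcal{E}$ and $\|v\|_{L^{\infty}}$, while $\|v(t)\|_{L^{\infty}}$ is in turn bounded, via $v\circ q=\rho_{0}/q_{x}-1$ and $\rho_{0}>0$, in terms of $\int_{0}^{t}\|u_{x}\|_{L^{\infty}}\,d\tau$. For $0<\beta<2$, the sign of the $-\tfrac{\beta}{2}N^{2}$ term together with the positivity of $\rho$ --- which keeps the characteristics from focusing --- closes this coupled system of differential inequalities, giving $\int_{0}^{T}\|u_{x}\|_{L^{\infty}}\,d\tau<\infty$ for every finite $T$ and hence $T_{\max}=\infty$.

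The genuinely delicate step is the last one. The one-sided Riccati bound $N'\le-\tfrac{\beta}{2}N^{2}+C$ controls $u_{x}$ from above, but by itself it permits wave breaking, $N\to-\infty$; converting it into a two-sided bound truly requires the coupling with $\rho$. Because the two equations of \eqref{eq:2CHr-1} propagate along different characteristic families, of speeds $u$ and $\beta u$, the cancellation that makes this transparent for the classical two-component Camassa--Holm system is not directly available, and arranging the bookkeeping so that the conserved energy $\mathcal{E}$ and the constraint $0<\beta<2$ are exactly what is needed to close the estimate is where the real work lies.
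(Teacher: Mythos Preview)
The paper does not prove this statement. It is quoted twice---once in the Introduction and once as Theorem~\ref{thm:Chen Liu}---as a result of Chen and Liu \cite[Theorems~3.1, 5.1]{ChenLiu}, and is used as a black box throughout the rest of the paper. So there is no ``paper's own proof'' to compare your proposal against.

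As for the proposal itself: the outline for local well-posedness (Kato-type quasilinear theory, commutator estimates, Bona--Smith) and for the blow-up criterion in terms of $\int_{0}^{T}\|u_{x}\|_{L^{\infty}}\,d\tau$ is standard and correct, as is the observation that $\rho\circ q\cdot q_{x}=\rho_{0}$ propagates positivity of $\rho$. But your last two paragraphs are an honest confession rather than a proof: you write down the one-sided Riccati inequality $N'\le -\tfrac{\beta}{2}N^{2}+C$, note that it does not by itself prevent $N\to-\infty$, and then simply assert that ``the positivity of $\rho$\ldots closes this coupled system of differential inequalities'' for $0<\beta<2$ without saying how. That is precisely the content of \cite[Theorem~5.1]{ChenLiu}, and the mechanism there is not the soft bookkeeping you suggest. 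In that paper the key is to track the Lagrangian quantities $M=u_{x}\circ q$ and $\gamma=\rho\circ q$ along the \emph{same} flow $q$ (speed $u$, not $\beta u$), derive the coupled system
\[
M'=-\dfrac{\beta}{2}M^{2}+\dfrac{1}{2}\gamma^{2}+f,\qquad \gamma'=-M\gamma,
\]
with $f$ bounded in terms of the conserved energy, and then exploit the structural cancellation: differentiating $M^{2}+\gamma^{2}$ (or a suitable $\beta$-weighted version) makes the dangerous cubic $-\beta M^{3}$ interact with $-2M\gamma^{2}$ in a way that yields a Gronwall bound precisely when $0<\beta<2$. Your sketch puts $M$ on the characteristics of $\beta u$ and $\gamma$ on those of $u$, which destroys this cancellation; that is why you could not close the argument.
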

In the present paper, we consider this Cauchy problem in the analytic
category. For $r>0$, set $S(r)=\left\{ x+iy\in\mathbb{C};\,|y|<r\right\} $
and 
\begin{align*}
A(r)= & \left\{ f\colon\mathbb{R}\to\mathbb{R};\,f(z)\text{ can be analytically continued to }S(r)\right\} \\
 & \cap\left\{ f\in L_{x,y}^{2}(S(r'))\text{ for all }0<r'<r\right\} .
\end{align*}
Then our main result is the following. 
\begin{thm*}
Assume $0<\beta<2$. If $u_{0},v_{0}\in A(r_{0})$ for some $r_{0}>0$
and $\inf_{x\in\mathbb{R}}v_{0}(x)>-1$, then the solution $(u,v)$
in the theorem above belongs to $\oplus^{2}\mathcal{C}^{\omega}([0,\infty)_{t}\times\mathbb{R}_{x})$.
Moreover, there exists a continuous function $\sigma(t)$ such that
we have $u(\cdot,t)$ and $v(\cdot,t)$ belong to $A(e^{\sigma(t)})$
for $t\in[0,T]$. 
\end{thm*}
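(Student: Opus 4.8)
The plan is to rewrite \eqref{eq:2CHr-1} as an abstract evolution equation $\partial_t w=F(w)$, with $w=(u,v)$, in a one-parameter scale of Banach spaces of real-analytic functions, to apply the autonomous Ovsyannikov-type Cauchy--Kovalevskaya theorem of Barostichi, Himonas and Petronilho to produce a local-in-time analytic solution, to identify that solution with the global $H^s\times H^{s-1}$ solution of the theorem above, and finally to continue it to all times. For the scale I would take $E_\delta=A(\delta)\times A(\delta)$, $0<\delta\le r_0$ --- notably a \emph{single} scale for both unknowns: in the analytic category the loss of one derivative that pushes $\rho$ into $H^{s-1}$ in the Sobolev setting costs nothing, since a bound on a slightly wider strip controls every derivative. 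I would first record the structural facts: $\{E_\delta\}$ is a decreasing scale, each $E_\delta$ is a Banach algebra, the Fourier multipliers $(1-\partial_x^2)^{-1}$ and $(1-\partial_x^2)^{-1}\partial_x$ are bounded on $E_\delta$ uniformly in $\delta$ (their symbols are bounded and membership in $A(\delta)$ is characterized by an $L^2$ weight on the Fourier side), and $E_\delta\hookrightarrow H^s(\mathbb R)\times H^{s-1}(\mathbb R)$ by Cauchy estimates. Reading $F=(F_1,F_2)$ off \eqref{eq:2CHr-1},
\[
F_1(u,v)=-\beta uu_x-(1-\partial_x^2)^{-1}\partial_x\Bigl[-\alpha u+\tfrac{3-\beta}{2}u^2+\tfrac{\beta}{2}u_x^2+v+\tfrac12 v^2\Bigr],\qquad F_2(u,v)=-u_x-u_xv-uv_x.
\]

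The local step is the estimate that, for $0<\delta<\delta'\le r_0$ and $w,\tilde w$ in a ball $B(0,R)\subset E_{\delta'}$,
\[
\|F(w)-F(\tilde w)\|_{E_\delta}\le\frac{C_R}{\delta'-\delta}\,\|w-\tilde w\|_{E_{\delta'}},
\]
together with $\|F(0)\|_{E_\delta}\le C(\delta'-\delta)^{-1}$. Every term of $F$ loses at most one $x$-derivative, which the factor $(\delta'-\delta)^{-1}$ absorbs via $\|\partial_x f\|_{E_\delta}\le C(\delta'-\delta)^{-1}\|f\|_{E_{\delta'}}$; the quadratic terms use the algebra property, and $u_x^2$ costs only one derivative because $(1-\partial_x^2)^{-1}\partial_x$ is bounded. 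Here the asymmetry of the system is visible at the level of the estimates themselves: the nonlocal smoothing assists $F_1$ but is absent in the transport-type $F_2$, so the two bounds must be produced separately rather than copied from a symmetric template. With this in hand the autonomous Ovsyannikov theorem gives a $T_0>0$ and a unique $w\in\bigcap_{0<\delta<r_0}C^1\bigl([0,T_0(1-\delta/r_0)),E_\delta\bigr)$ which is moreover analytic in $t$; since analyticity in $t$ into $A(\delta)$ yields, by a standard Hartogs-type argument, joint real-analyticity in $(t,x)$, we obtain $(u,v)\in\oplus^2\mathcal C^\omega([0,T_1)\times\mathbb R)$ for some $T_1>0$, with a continuous strictly positive radius of analyticity $\rho(t)$. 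By the embedding $E_\delta\hookrightarrow H^s\times H^{s-1}$ this local solution is an $H^s\times H^{s-1}$ solution, hence by the uniqueness asserted in \cite{ChenLiu} it coincides on $[0,T_1)$ with the global solution there.

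For the global part, fix $T>0$ and let $M=M(T)$ collect the quantities that \cite{ChenLiu} controls on $[0,T]$ --- in particular $\sup_{[0,T]}\bigl(\|u(\cdot,t)\|_{H^s}+\|v(\cdot,t)\|_{H^{s-1}}\bigr)$ and $\sup_{[0,T]}\bigl(1+\inf_x v(\cdot,t)\bigr)^{-1}$, all finite. This continuation is the step I expect to be the main obstacle. A naive repetition of the local step fails to exhaust $[0,T]$: at each restart the Ovsyannikov theorem only controls the solution in a narrower strip and in a ball of about twice the radius, so the admissible time-increments shrink geometrically and sum to a finite time; the difficulty is genuine, because the Lipschitz constant of $F$ on a ball is governed by the \emph{analytic} norm there, not by $M$. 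The remedy --- and the technical heart of the argument --- is to prove a global-in-time bound for the analytic norm $N(t)=\|w(\cdot,t)\|_{E_{\rho(t)}}$ by an energy estimate carried out directly in the analytic scale, in the spirit of Kato--Masuda: order by order in $x$, the nonlinear contributions enjoy the same commutator cancellations that close the $H^s\times H^{s-1}$ well-posedness estimate --- including the cancellation between the $v$-term of the $u$-equation and the $u$-term of the $v$-equation --- so that only the low-order quantities bounded by $M$ enter the coefficients; the negative effect of letting the strip shrink then absorbs the single derivative lost by $F$, leaving $\dot N(t)\le C(M)N(t)$ provided $\rho$ is taken to decay at a rate $\kappa(M)$. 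This yields $N(t)\le N(0)e^{C(M)t}$ and $\rho(t)\ge r_0 e^{-\kappa(M)t}>0$ on $[0,T]$. Thus the analytic solution extends to $[0,T]$, continues to agree with the Chen--Liu solution by uniqueness, and patching over $T\uparrow\infty$ gives $(u,v)\in\oplus^2\mathcal C^\omega([0,\infty)_t\times\mathbb R_x)$; taking $\sigma(t)=\log\rho(t)$, which is continuous, we get $u(\cdot,t),v(\cdot,t)\in A(e^{\sigma(t)})$ for $t\in[0,T]$, as required.
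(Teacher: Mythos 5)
Your overall architecture matches the paper's: an autonomous Ovsyannikov step for local analyticity, identification with the Chen--Liu solution by Sobolev uniqueness, a Kato--Masuda-type analytic energy estimate to propagate a lower bound on the radius of analyticity over $[0,T]$, a restart/continuation argument, and a final passage from analyticity in $x$ and in $t$ separately to joint analyticity. You also correctly locate the heart of the matter in the global energy estimate. But that is exactly where your proposal has a genuine gap. You work throughout with the \emph{symmetric} scale $A(\delta)\times A(\delta)$ and justify this by saying that the derivative loss which puts $\rho$ in $H^{s-1}$ ``costs nothing'' in the analytic category. That is true for the local Ovsyannikov step, but false for the global one. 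If the analytic energy is a symmetric sum such as $\sum_j \frac{e^{2j\sigma}}{j!^2}\bigl(\|u^{(j)}\|_2^2+\|v^{(j)}\|_2^2\bigr)$, then the \emph{linear} term $u_x$ in the $v$-equation already breaks the estimate: the pairing $\frac{e^{2j\sigma}}{j!^2}\langle v^{(j)},u^{(j+1)}\rangle_2$ can only be dominated by the weight attached to $\|u^{(j+1)}\|_2^2$, namely $\frac{e^{2(j+1)\sigma}}{(j+1)!^2}$, at the cost of a factor $e^{-2\sigma}$, which is unbounded as the strip shrinks ($\sigma\to-\infty$); the Kato--Masuda constants $K,L,M$ must be uniform in $\sigma$ over the whole range, so the differential inequality $\dot N\le C(M)N$ does not close. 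The cancellation you invoke does not rescue this: with the \emph{same} Sobolev order for both components, $\langle u^{(j)},\Lambda^{-2}\partial_x v^{(j)}\rangle_2+\langle v^{(j)},\partial_x u^{(j)}\rangle_2$ does not vanish but leaves a remainder $\langle(\partial_x^4-\partial_x^2)\partial_x u^{(j)},v^{(j)}\rangle_0$, which is \emph{worse} than either term alone. The exact cancellation you remember from the $H^s\times H^{s-1}$ theory requires the $v$-component to sit one order below the $u$-component, i.e.\ an asymmetric functional --- and building that asymmetry into the analytic energy is precisely the paper's main technical device (it uses the index-shifted weights $\frac12\sum_{j=1}^{m+1}\frac{e^{2(j-1)\sigma}}{j!^2}\|u^{(j)}\|_2^2+\frac12\sum_{j=0}^{m}\frac{e^{2j\sigma}}{j!^2}\|v^{(j)}\|_2^2$, for which the cross terms are estimated directly with $\sigma$-independent constants, no cancellation needed). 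Your proposal as written omits this ingredient, and the step you yourself flag as the technical heart does not go through without it.

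Two smaller points. First, your predicted radius $\rho(t)\ge r_0e^{-\kappa(M)t}$ is more optimistic than what the method delivers: the term $L\Phi^{1/2}\partial_\sigma\Phi$ in the Liapunov inequality forces $\sigma(t)$ to decay like $-\mathrm{const}\cdot(e^{Kt/2}-1)-Mt$, not linearly; this does not affect the truth of the theorem, only the explicit $\sigma(t)$. Second, ``a standard Hartogs-type argument'' is not the right tool for joint real-analyticity in $(t,x)$ on $[0,\infty)\times\mathbb{R}$; one needs an actual quantitative step (the paper uses Cauchy estimates in $t$ for the $A(\delta_T)$-valued holomorphic extension together with the Komatsu/Kotake--Narasimhan $L^2$ characterization of real-analyticity). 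This second point is fixable by a standard argument, but it should be supplied rather than named.
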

In the course of the proof, we derive a local unique solvability result
of the Cauchy-Kowalevsky type, which is interesting in its own right. 

We recall some background. The original Camassa-Holm equation
\begin{equation}
u_{t}-u_{xxt}+3uu_{x}=2u_{x}u_{xx}+uu_{xxx}\label{eq:CH}
\end{equation}
was first proposed in \cite{FokasFuchssteiner} in the context of
integrability and was later studied in \cite{CamassaHolm}. It has
been found that \eqref{eq:CH} describes shallow water waves and other
physical phenomena. See \cite{ChenLiu}. The single equation \eqref{eq:CH}
admits many multi-component generalizations. A well-known one is 
\begin{equation}
\begin{cases}
 & m_{t}-Au_{x}+2mu_{x}+um_{x}+\rho\rho_{x}=0,\,m=u-u_{xx},\\
 & \rho_{t}+(\rho u)_{x}=0,
\end{cases}\label{eq:2CHrho-2}
\end{equation}
in \cite{Shabat-Alonso} and it corresponds to the case of $\beta=1$
in \eqref{eq:2CHrho-1}. 

The Camassa-Holm equation and its variants, including the ones above,
are studied from several points of view. Some authors employ the inverse
scattering technique (e.g. \cite{Boutet,Constantin}), while others
apply Hirota's bilinear method (e.g. \cite{Parker}). In a significant
number of articles, including \cite{ChenLiu}, solutions have been
obtained in the Sobolev or Besov spaces by using PDE techniques. Relatively
fewer number of articles deal with real- or complex-analytic solutions.
The analyticity of solutions is relevant since the equations model
water waves. See \cite{Constantin-Escher} for some discussion about
analyticity of water waves. In \cite{BHP,BHPpower,BHPglobal}, the
authors solved the Camassa-Holm and other equations in some spaces
of analytic solutions locally or globally. Their methods were employed
in \cite{yamane muCH} by the present author to solve the $\mu$-Camassa-Holm
and similar equations. In the present paper, we use similar methods
in the study of the two-component system \eqref{eq:2CHr-1}. The difficulty
lies in the fact that \eqref{eq:2CHr-1} is not symmetric in $u$
and $v$. It is of higher order in $u$ than in $v$. In \cite{BHPglobal},
the authors introduced the quantity $2^{-1}\sum_{j=0}^{m}j!^{-2}e^{2j\sigma}\|u^{(j)}\|_{2}^{2}$
in order to prove global analyticity of the unknown $u$. The system
\eqref{eq:2CHr-1} has two unknowns $u$ and $v$ and we are tempted
to introduce a sum of two quantities of this form. As it turns out,
the asymmetry of \eqref{eq:2CHr-1} messes up estimates involving
such a sum. We can overcome this difficulty by introducing the asymmetric
quantity $2^{-1}\sum_{j=1}^{m+1}j!^{-2}e^{2(j-1)\sigma}\|u^{(j)}\|_{2}^{2}+2^{-1}\sum_{j=0}^{m}j!^{-2}e^{2j\sigma}\|v^{(j)}\|_{2}^{2}$
and other relavant sums.

Although our argument is more complicated than that in \cite{BHPglobal}
because of the asymmetry, the former is simpler than the latter in
another respect. We have streamlined the argument by using fewer function
spaces. In \cite{BHPglobal}, the authors employed the spaces $G^{\delta,\theta}(\mathbb{R})$
and $E_{\delta,m}(\mathbb{R})$ in addition to $A(r)$. The two spaces
$G^{\delta,\theta}(\mathbb{R})$ and $E_{\delta,m}(\mathbb{R})$ have
similar properties, but the latter is better. In the present paper,
we make extensive use of a generalization of $E_{\delta,m}(\mathbb{R})$
and do without $G^{\delta,\theta}(\mathbb{R})$. As is stated above,
we formulate our main result in terms of $A(r)$: if the initial value
is in $\oplus^{2}A(r_{0})$ for some $r_{0}$, then the solution is
$A(r)$ for some other $r$. Such a formulation is also possible for
the main result in \cite{BHPglobal}, in which the the initial value
is assumed to be in $\oplus{}^{2}G^{1,\theta+2}(\mathbb{R})$ but
the solution is found in the larger space $A(r)$. 

To the best of our knowledge, the present work is the first result
about global analyticity for a Camassa-Holm-like system. Notice that
a global Gevrey regularity result involving a higher-order inertia
operator $(1-\partial_{x}^{2})^{s},s>1$ can be found in \cite{He-Yin},
in which the authors studied the system 
\begin{equation}
\begin{cases}
 & m_{t}-\alpha u_{x}+\beta(2mu_{x}+um_{x})+3(1-\beta)uu_{x}+\rho\rho_{x}=0,\\
 & \rho_{t}+(\rho u)_{x}=0,\\
 & m=(1-\partial_{x}^{2})^{s}u,s>1
\end{cases}\label{eq:HeYin}
\end{equation}
and proved Gevrey regularity in $x$ for any fixed $t$. Notice that
 $(1-\partial_{x}^{2})^{-s}$ has a stronger smoothing effect than
$(1-\partial_{x}^{2})^{-1}$. 

The rest of the paper is organized as follows. In Section \ref{sec:Function-spaces},
we introduce some function spaces and prove their properties. Sections
\ref{sec:Local analyticity} and \ref{sec:Global-analyticity} are
devoted to the local and global theories respectively. In the latter
we need a lot of inequalities and their proofs are given in Sections
\ref{sec:estimates1} and \ref{sec:estimates2}. 

\section{Function spaces\label{sec:Function-spaces}}

In the present paper $L^{2}=L^{2}(\mathbb{R})$, $\mathcal{C}^{\infty}(\mathbb{R})$
and their subspaces consist of \emph{real-valued} functions on $\mathbb{R}$.
We shall make frequent use of $H^{s}=H^{s}(\mathbb{R})\subset L^{2}$
with the norm $\|f\|_{s}=\|(1+\xi^{2})^{s/2}\hat{f}(\xi)\|_{L^{2}}$
and the inner product $\langle\cdot,\cdot\rangle_{s}$. We recall
some known facts about $H^{s}$ (\cite{BHPglobal,Kato-Ponce}).
\begin{lem}
\label{lem:known facts about H^s}Set $\Lambda=(1-\partial_{x}^{2})^{1/2}$.\\
(i) $\|f\|_{2}^{2}=\|\Lambda^{2}f\|_{0}^{2}=\|f\|_{0}^{2}+2\|f'\|_{0}^{2}+\|f''\|_{0}^{2}$.
\\
(ii) For $f\in L^{2}$ and $g\in H^{s}\,(s>1/2)$, we have 
\[
\|fg\|_{0}\le d(s)\|f\|_{0}\|g\|_{s},
\]
where $d(s)=\left[\int_{\mathbb{R}}(1+\xi^{2})^{-s}\,ds\right]^{1/2}$.
In particular, we have
\begin{align*}
 & \|fg\|_{0}\le\sqrt{\pi}\|f\|_{0}\|g\|_{1}\le2\|f\|_{0}\|g\|_{1},\\
 & |\langle f,gh\rangle_{0}|\le2\|f\|_{0}\|g\|_{0}\|h\|_{1}.
\end{align*}
(iii) For $f\in H^{s}\,(s\ge0)$, we have $\|\Lambda^{-2}f\|_{s+2}=\|f\|_{s}$.\\
(iv) For $f\in H^{s+1}\,(s\ge0)$, we have $\|\partial_{x}f\|_{s}\le\|f\|_{s+1}$.\\
(v) For $f,g\in H^{s}\,(s\ge1)$, we have 
\[
\|fg\|_{s}\le c_{s}(\|f\|_{s}\|g\|_{1}+\|f\|_{1}\|g\|_{s})
\]
for some constant $c_{s}>0$. In particular, for $f,g\in H^{s}\,(s\ge3)$,
we have
\[
\|fg\|_{2}\le8(\|f\|_{2}\|g\|_{1}+\|f\|_{1}\|g\|_{2}).
\]
(vi) For $f,g\in H^{s}\,(s>1/2)$, we have
\[
\|fg\|_{s}\le c(s)\|f\|_{s}\|g\|_{s},
\]
where $c(s)=\left[(1+2^{2s})\int_{\mathbb{R}}(1+\xi^{2})^{-s}\,ds\right]^{1/2}$.
In particular, we have
\begin{align*}
 & \|fg\|_{1}\le4\|f\|_{1}\|g\|_{1},\;\|fg\|_{2}\le8\|f\|_{2}\|g\|_{2},
\end{align*}
\end{lem}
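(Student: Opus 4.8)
The plan is to reduce every part to the Fourier-multiplier description $\|f\|_s=\|(1+\xi^2)^{s/2}\hat f\|_{L^2}$, equivalently $\|f\|_s=\|\Lambda^s f\|_0$, together with the Plancherel identity $\|f\|_0=\|\hat f\|_{L^2}$ and the transcriptions $\widehat{\partial_x f}(\xi)=i\xi\hat f(\xi)$ and $\widehat{\Lambda^\sigma f}(\xi)=(1+\xi^2)^{\sigma/2}\hat f(\xi)$. With these in hand, parts (i), (iii), (iv) are direct computations: for (i) I would expand $(1+\xi^2)^2=1+2\xi^2+\xi^4$ and read off the three Plancherel terms; for (iii) the multiplier $(1+\xi^2)^{-1}$ coming from $\Lambda^{-2}$ cancels exactly against two of the $s+2$ powers, leaving $(1+\xi^2)^s$; and for (iv) I would use the pointwise inequality $\xi^2\le 1+\xi^2$ under the integral. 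None of these requires more than Plancherel.

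Part (ii) rests on the single estimate $\|\hat g\|_{L^1}\le d(s)\|g\|_s$, obtained by writing $|\hat g|=(1+\xi^2)^{-s/2}\cdot(1+\xi^2)^{s/2}|\hat g|$ and applying Cauchy--Schwarz, the first factor contributing exactly $d(s)$. From here there are two equivalent routes: bound $\|fg\|_0\le\|f\|_0\|g\|_{L^\infty}$ and control $\|g\|_{L^\infty}$ by $\|\hat g\|_{L^1}$ through Fourier inversion, or work directly with $\widehat{fg}=(2\pi)^{-1/2}\hat f\ast\hat g$ and invoke Young's inequality $\|u\ast w\|_{L^2}\le\|u\|_{L^2}\|w\|_{L^1}$. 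Either yields $\|fg\|_0\le d(s)\|f\|_0\|g\|_s$, and the numerical value $d(1)=\sqrt\pi\le2$ gives the displayed special cases. The bilinear-form bound then follows from Cauchy--Schwarz $|\langle f,gh\rangle_0|\le\|f\|_0\|gh\|_0$ composed with the product estimate applied to $gh$.

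For the algebra estimate (vi) the plan is the standard Fourier/Young argument: starting from $\|fg\|_s^2=\int(1+\xi^2)^s|\widehat{fg}(\xi)|^2\,d\xi$ with $\widehat{fg}=(2\pi)^{-1/2}\hat f\ast\hat g$, I would bound the weight $(1+\xi^2)^{s/2}$ inside the convolution by a case analysis on the frequencies: when $|\eta|\le|\xi|/2$ one has $(1+\xi^2)^{s/2}\le2^s(1+(\xi-\eta)^2)^{s/2}$, and otherwise $(1+\xi^2)^{s/2}\le2^s(1+\eta^2)^{s/2}$, so that $\widehat{fg}$ is dominated by the sum of the two convolutions $|\widehat{\Lambda^s f}|\ast|\hat g|$ and $|\hat f|\ast|\widehat{\Lambda^s g}|$. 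Young's inequality and the $L^1$ bound above then produce $\|fg\|_s\le c(s)\|f\|_s\|g\|_s$, the constant $c(s)=[(1+2^{2s})\int(1+\xi^2)^{-s}\,d\xi]^{1/2}$ emerging once the two regions are recombined; the specializations $\|fg\|_1\le4\|f\|_1\|g\|_1$ and $\|fg\|_2\le8\|f\|_2\|g\|_2$ are then pure arithmetic using $\int(1+\xi^2)^{-1}\,d\xi=\pi$ and $\int(1+\xi^2)^{-2}\,d\xi=\pi/2$. Part (v) is the Kato--Ponce (Moser-type) product estimate, for which I would quote \cite{Kato-Ponce}, $\|\Lambda^s(fg)\|_0\lesssim\|\Lambda^s f\|_0\|g\|_{L^\infty}+\|f\|_{L^\infty}\|\Lambda^s g\|_0$, and absorb the $L^\infty$ norms into $\|\cdot\|_1$ by the Sobolev embedding (the case $s=1>1/2$ of (ii)); for the explicit constant in the $s=2$ case I would instead argue self-containedly, expanding $\|fg\|_2^2$ by part (i), applying the Leibniz rule to $(fg)'$ and $(fg)''$, and estimating each resulting product by (ii) together with the monotonicities $\|f'\|_0\le\|f\|_1$, $\|f''\|_0\le\|f\|_2$, $\|f'\|_1\le\|f\|_2$, which upon collecting terms gives the factor $8$.

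The estimates themselves are classical, so the main work is bookkeeping rather than a conceptual obstacle. The one place that needs genuine care is the frequency case-analysis in (vi): obtaining the clean factor $(1+2^{2s})$ rather than a cruder $2^{2s}$ or $2^{2s+1}$ requires keeping the two regions $|\eta|\le|\xi|/2$ and $|\eta|>|\xi|/2$ separate and recombining them optimally, and similarly the self-contained derivation of the constant $8$ in (v) depends on grouping the Leibniz terms so that every factor is pushed into the form $\|f\|_2\|g\|_1+\|f\|_1\|g\|_2$. These are the fiddly steps; everything else is a direct consequence of Plancherel and Young's inequality.
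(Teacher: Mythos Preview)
The paper does not prove this lemma at all: it is stated as a collection of known facts with citations to \cite{BHPglobal} and \cite{Kato-Ponce}, and no argument is given. Your outline therefore supplies strictly more than the paper does, and the route you describe---Plancherel for (i), (iii), (iv); Cauchy--Schwarz on $\hat g=(1+\xi^2)^{-s/2}\cdot(1+\xi^2)^{s/2}\hat g$ plus Young/Sobolev for (ii); the Kato--Ponce product estimate from \cite{Kato-Ponce} for (v); and the frequency-splitting/Young argument for (vi)---is the standard one and is correct in substance.

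One minor caveat on (vi): the particular split you propose, at $|\eta|=|\xi|/2$, gives in each region the bound $(1+\xi^2)^{s/2}\le 2^{s}(1+(\xi-\eta)^2)^{s/2}$ (respectively $2^{s}(1+\eta^2)^{s/2}$), and after Young and the $L^1$ bound this produces a constant of the form $2^{s+1}(2\pi)^{-1/2}d(s)$ rather than exactly $c(s)=[(1+2^{2s})d(s)^2]^{1/2}$. Recovering the precise form $(1+2^{2s})$ requires organizing the weight inequality slightly differently. This is purely cosmetic, however: for $s=1$ and $s=2$ either constant is comfortably below $4$ and $8$, so the displayed specializations follow regardless, and the paper in any case takes the whole lemma as given.
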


For $r>0$, set 
\begin{align*}
S(r)= & \left\{ x+iy\in\mathbb{C};\,|y|<r\right\} ,\\
A(r)= & \left\{ f\colon\mathbb{R}\to\mathbb{R};\,f(z)\text{ can be analytically continued to }S(r)\right\} \\
 & \cap\left\{ f\in L_{x,y}^{2}(S(r'))\text{ for all }0<r'<r\right\} .
\end{align*}
Notice that $A(r)$ is a subspace of $L^{2}$, the space of \textit{real-valued}
square-integrable functions on $\mathbb{R}$. 

Following \cite{KatoMasuda}, we set 
\[
\|f\|_{\sigma,s}^{2}=\sum_{j=0}^{\infty}\frac{e^{2j\sigma}}{j!^{2}}\|f^{(j)}\|_{s}^{2},\;s\ge0,
\]
for $f\in H^{\infty}=\cap_{s\ge0}H^{s},$ where $f^{(j)}=\partial_{x}^{j}f(x),\,\partial_{x}=d/dx$. 
\begin{lem}
\label{lem:frechet}(\cite{KatoMasuda}) The norms $\|\cdot\|_{\sigma,s}$
have the following properties.

(i) Assume $s,s'\ge0$ and $\sigma'<\sigma$. Then there exists a
positive constant $c>0$ such that 
\[
\|f\|_{\sigma',s'}\le c\|f\|_{\sigma,s}
\]
for any $f$. 

(ii) If $f\in A(r)$ and $\sigma<\log r,s\ge0$, then $\|f\|_{\sigma,s}<\infty$.

(iii) Let $s\ge0$ be fixed. If $f\in H^{\infty}$ satisfies $\|f\|_{\sigma,s}<\infty$
for some $s\ge0$ and any $\sigma$ with $\sigma<\log r$, then $f\in A(r)$. 
\end{lem}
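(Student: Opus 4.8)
The plan is to handle the three parts separately; (i) is a bookkeeping estimate on the defining series, while (ii) and (iii) are the two halves of an equivalence between membership in $A(r)$ and finiteness of the weighted norms, proved respectively by Cauchy estimates and by Taylor reconstruction.

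For (i), I would first reduce to the case $s'>s$: if $s'\le s$ then $\|g\|_{s'}\le\|g\|_{s}$ pointwise in the Fourier variable and, since $\sigma'<\sigma$ forces $e^{2j\sigma'}\le e^{2j\sigma}$, we get $\|f\|_{\sigma',s'}\le\|f\|_{\sigma,s}$ at once. For $s'>s$, pick an integer $k\ge s'-s$; from $(1+\xi^{2})^{s'}\le 2^{k}(1+\xi^{2})^{s}(1+\xi^{2k})$ one obtains $\|g\|_{s'}^{2}\le 2^{k}(\|g\|_{s}^{2}+\|g^{(k)}\|_{s}^{2})$, hence $\|f^{(j)}\|_{s'}^{2}\le 2^{k}(\|f^{(j)}\|_{s}^{2}+\|f^{(j+k)}\|_{s}^{2})$. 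Insert this into $\|f\|_{\sigma',s'}^{2}=\sum_{j}e^{2j\sigma'}j!^{-2}\|f^{(j)}\|_{s'}^{2}$ and split the sum; in the shifted piece substitute $l=j+k$ and note that the ratio of $e^{2(l-k)\sigma'}/(l-k)!^{2}$ to $e^{2l\sigma}/l!^{2}$ equals $e^{-2k\sigma'}e^{2l(\sigma'-\sigma)}\,l!^{2}/(l-k)!^{2}$, which is $\le e^{-2k\sigma'}l^{2k}e^{2l(\sigma'-\sigma)}$ and thus bounded uniformly in $l\ge k$ because $\sigma'-\sigma<0$. Both pieces are then dominated by a constant times $\|f\|_{\sigma,s}^{2}$.

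For (ii), fix $\rho$ with $e^{\sigma}<\rho<r$. First, subharmonicity of $|f|^{2}$ (a mean-value estimate over small disks, integrated in the horizontal variable) promotes the hypothesis $f\in L^{2}_{x,y}(S(r''))$ for all $r''<r$ to a uniform horizontal-line bound $N_{\rho}:=\sup_{|h|\le\rho}\|f(\cdot+ih)\|_{L^{2}_{x}}<\infty$. Next, since the circle $\{|z-x_{0}|=\rho\}$ lies in $S(r)$ for every $x_{0}\in\mathbb{R}$, Cauchy's formula gives $f^{(j)}(x_{0})=\frac{j!}{2\pi\rho^{j}}\int_{0}^{2\pi}f(x_{0}+\rho e^{i\theta})e^{-ij\theta}\,d\theta$; taking $L^{2}_{x}$-norms via Minkowski's integral inequality yields $\|f^{(j)}\|_{0}\le j!\,\rho^{-j}N_{\rho}$. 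Because $(1+\xi^{2})^{s}\le C_{s}(1+\xi^{2\lceil s\rceil})$, this also controls $\|f^{(j)}\|_{s}\le C_{s}'(\|f^{(j)}\|_{0}+\|f^{(j+\lceil s\rceil)}\|_{0})\le C_{s}''\,(j+\lceil s\rceil)!\,\rho^{-j}$. Summing, $\|f\|_{\sigma,s}^{2}$ is bounded by a constant times $\sum_{j}\bigl((j+\lceil s\rceil)!/j!\bigr)^{2}(e^{\sigma}/\rho)^{2j}<\infty$, since the geometric factor $e^{\sigma}/\rho<1$ beats the polynomial growth of the factorial ratio.

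For (iii), from $\|f\|_{\sigma,s}<\infty$ for every $\sigma<\log r$ we read off $\|f^{(j)}\|_{s}\le C_{\sigma}\,j!\,e^{-j\sigma}$; using $\|g\|_{0}\le\|g\|_{s}$ (valid since $s\ge0$) together with the one-dimensional embedding $H^{1}\hookrightarrow L^{\infty}$ gives $\|f^{(j)}\|_{L^{\infty}}\le C(\|f^{(j)}\|_{s}+\|f^{(j+1)}\|_{s})\le C_{\rho}\,j!\,\rho^{-j}$ for any $\rho<r$. Hence the Taylor series $\sum_{j}\frac{f^{(j)}(x_{0})}{j!}(z-x_{0})^{j}$ converges on the disk of radius $\rho$ about each $x_{0}\in\mathbb{R}$, uniformly in $x_{0}$; the real Taylor remainder bound $|h|^{N}\|f^{(N)}\|_{L^{\infty}}/N!\to0$ for $|h|<\rho$ shows this series represents $f$ on $\mathbb{R}$, and the local series centered at nearby real points agree on overlaps by the identity theorem, so they glue to a function $F$ holomorphic on $S(\rho)$ and hence, $\rho<r$ being arbitrary, on $S(r)$. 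Finally $\|F(\cdot+ih)\|_{L^{2}_{x}}\le\sum_{j}\frac{|h|^{j}}{j!}\|f^{(j)}\|_{0}\le C_{\sigma}\sum_{j}(|h|/\rho)^{j}<\infty$ for $|h|<\rho$, so $F\in L^{2}_{x,y}(S(r'))$ for every $r'<r$, and therefore $f=F|_{\mathbb{R}}\in A(r)$.

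The step I expect to be the main obstacle is the passage between the complex-analytic side and the sequence of Sobolev norms, namely the Cauchy bound $\|f^{(j)}\|_{0}\le j!\,\rho^{-j}N_{\rho}$ in (ii) and the Taylor reconstruction in (iii); within (ii) the genuinely delicate point is the subharmonicity argument upgrading ``$f\in L^{2}_{x,y}$ on every substrip'' to the \emph{uniform} bound $\sup_{|h|\le\rho}\|f(\cdot+ih)\|_{L^{2}_{x}}<\infty$. Everything else reduces to careful but routine summation of factorial--exponential series.
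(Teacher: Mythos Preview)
The paper does not prove this lemma at all: it is stated with a citation to Kato--Masuda \cite{KatoMasuda} and used as a black box. Your argument supplies a complete and correct proof. Part (i) is handled exactly as one would expect, by a derivative-shift and the decay of $l^{2k}e^{2l(\sigma'-\sigma)}$. Parts (ii) and (iii) correctly set up the equivalence between analyticity in a strip and the scale of Kato--Masuda norms: the subharmonicity step in (ii) upgrading $L^{2}_{x,y}$-integrability on substrips to a uniform horizontal-slice bound is the right idea (and indeed the delicate point you identified), and the Cauchy/Minkowski estimate then gives the factorial bound on $\|f^{(j)}\|_{0}$. The Taylor reconstruction in (iii), including the real remainder bound and the identity-theorem gluing, is also correct; the final $L^{2}$ check on slices is fine once you note that the relevant radius is $e^{\sigma}$ (your ``$\rho$'' there should be read as $e^{\sigma}$, but this is cosmetic). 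In short, you have filled in what the paper outsources to the literature.
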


\begin{prop}
\label{prop:families of norms}(\cite{KatoMasuda}) When $r>0$ is
fixed, the following four families of norms determine the same topology
of $A(r)$ as a Fr\'echet space. With this topology, $A(r)$ is continuously
embedded in $H^{\infty}$.

(i) the $L_{x,y}^{2}(S(r'))$ norms $(0<r'<r)$\hspace{2em}(ii) $\|\cdot\|_{\sigma,s}$
$(s\ge0,\sigma<\log r)$

(iii) $\|\cdot\|_{\sigma,2}$ $(\sigma<\log r)$ \hspace{2em}(iv)
$\|\cdot\|_{\sigma,0}$ $(\sigma<\log r)$
\end{prop}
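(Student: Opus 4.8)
\emph{Proof plan.} Since family (i) already makes sense on $A(r)$ while families (ii)--(iv) a priori presuppose membership in $H^{\infty}$, the first task is to show $A(r)\subseteq H^{\infty}$; after that, proving the four families equivalent reduces to a short cycle of domination inequalities, and the Fr\'echet property together with the continuity of $A(r)\hookrightarrow H^{\infty}$ will follow quickly. The geometric fact used throughout is that for $f\in A(r)$, $x\in\mathbb{R}$ and $0<r'<r$ the closed disc $\{\,|w-x|\le r'\,\}$ is contained in the strip $S(r)$ (and hence in $S(r')$), so Cauchy's formula and Parseval on circles apply to $f$ there.

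\textbf{Step 1: a Cauchy estimate, $A(r)\subseteq H^{\infty}$, and domination of family (iv) by family (i).} Parseval applied to the Taylor series of $f$ on the circle $|w-x|=\rho$, $\rho<r'$, gives $|f^{(j)}(x)|^{2}\rho^{2j}/j!^{2}\le(2\pi)^{-1}\int_{0}^{2\pi}|f(x+\rho e^{i\theta})|^{2}\,d\theta$. Multiplying by $\rho$ and integrating over $\rho\in(0,r')$ turns the right-hand side into an area integral over the disc $\{|w-x|<r'\}$; integrating then over $x\in\mathbb{R}$ and using Fubini yields
\[
\|f^{(j)}\|_{0}^{2}\le\frac{2(j+1)\,j!^{2}}{\pi\,(r')^{2j+1}}\,\|f\|_{L^{2}_{x,y}(S(r'))}^{2}.
\]
In particular each $f^{(j)}$ lies in $L^{2}$, so $f\in H^{\infty}$ and $\|f\|_{\sigma,s}$ is meaningful for every $f\in A(r)$. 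Summing the displayed bound against $e^{2j\sigma}/j!^{2}$ and using $\sum_{j\ge0}(j+1)(e^{\sigma}/r')^{2j}<\infty$ whenever $e^{\sigma}<r'$, I conclude that for each $\sigma<\log r$, upon choosing $r'$ with $e^{\sigma}<r'<r$, one has $\|f\|_{\sigma,0}\le C(\sigma,r')\,\|f\|_{L^{2}_{x,y}(S(r'))}$.

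\textbf{Step 2: domination of family (i) by family (iv), and equivalence of (ii), (iii), (iv).} For $|y|<r'<r$ one has $f(x+iy)=\sum_{j\ge0}(iy)^{j}f^{(j)}(x)/j!$, and Cauchy--Schwarz gives $\sum_{j}(|y|^{j}/j!)\|f^{(j)}\|_{0}\le\bigl(\sum_{j}(|y|e^{-\sigma})^{2j}\bigr)^{1/2}\|f\|_{\sigma,0}$, finite as soon as $|y|<e^{\sigma}$, so the Taylor series converges absolutely in $L^{2}_{x}$. Taking $L^{2}_{x}$-norms by Minkowski, bounding the resulting factor uniformly for $|y|<r'<e^{\sigma}$, and integrating in $y$ over $(-r',r')$ gives $\|f\|_{L^{2}_{x,y}(S(r'))}\le C(\sigma,r')\,\|f\|_{\sigma,0}$, where given $r'<r$ any $\sigma$ with $\log r'<\sigma<\log r$ works. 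Together with Step 1, families (i) and (iv) are equivalent. The equivalence of (ii), (iii), (iv) is then formal: $\|f\|_{\sigma,0}\le\|f\|_{\sigma,2}\le\|f\|_{\sigma,s}$ for $s\ge2$ trivially, while Lemma~\ref{lem:frechet}(i) gives $\|f\|_{\sigma,s}\le c\,\|f\|_{\sigma',0}$ whenever $\sigma<\sigma'<\log r$, so each seminorm from (ii) is controlled by one from (iv).

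\textbf{Step 3: the embedding and the Fr\'echet property.} Continuity of $A(r)\hookrightarrow H^{\infty}$ is immediate, since $\|f\|_{s}\le\|f\|_{\sigma,s}$ (the $j=0$ term) and $\|\cdot\|_{\sigma,s}$ has just been shown continuous on $A(r)$. For the Fr\'echet claim: the topology is metrizable because the seminorms $\|\cdot\|_{L^{2}_{x,y}(S(r_{n}))}$ with $r_{n}\uparrow r$ are cofinal in family (i); it is Hausdorff because a holomorphic function vanishing a.e.\ on the connected open set $S(r)$ vanishes identically; and it is complete because $L^{2}_{\mathrm{loc}}$-convergence of holomorphic functions is locally uniform (mean value inequality plus Cauchy--Schwarz), so a Cauchy sequence in $A(r)$ converges locally uniformly to a holomorphic limit which is real on $\mathbb{R}$ and lies in $A(r)$. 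I expect no real obstacle here: the only points needing care are the justification of the term-by-term Taylor manipulations in Step~2 (provided by the absolute $L^{2}_{x}$-convergence there) and the routine bookkeeping with the series $\sum_{j}t^{2j}/j!^{2}$ and $\sum_{j}(j+1)t^{2j}$. This is in substance the argument of \cite{KatoMasuda}.
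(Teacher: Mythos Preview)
The paper does not supply its own proof of this proposition; it is simply quoted from \cite{KatoMasuda}. Your argument is a correct and self-contained reconstruction of that result, organized around the natural cycle of domination inequalities: the Cauchy--Parseval estimate in Step~1 gives (i)$\Rightarrow$(iv), the Taylor expansion in $L^{2}_{x}$ in Step~2 gives (iv)$\Rightarrow$(i), and Lemma~\ref{lem:frechet}(i) closes the loop among (ii), (iii), (iv). The completeness and embedding claims in Step~3 are handled correctly. One cosmetic slip: the \emph{closed} disc $\{|w-x|\le r'\}$ is not contained in the \emph{open} strip $S(r')$ (points with $|\mathrm{Im}\,w|=r'$ lie on its boundary), but it is contained in $S(r)$, and your actual computations use only radii $\rho<r'$, so nothing in the argument is affected.
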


\begin{lem}
\label{lem:KatoMasudaLem2.4}(\cite{KatoMasuda}) Let $f_{n}\in A(r)$
be a sequence with $\|f_{n}\|_{\sigma,2}$ bounded, where $\sigma<\log r$.
If $f_{n}\to0$ in $H^{\infty}$ as $n\to\infty$, then $\|f_{n}\|_{\sigma',2}\to0$
for each $\sigma'<\sigma$. 
\end{lem}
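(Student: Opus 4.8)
The plan is to control $\|f_{n}\|_{\sigma',2}^{2}=\sum_{j=0}^{\infty}\frac{e^{2j\sigma'}}{j!^{2}}\|f_{n}^{(j)}\|_{2}^{2}$ by the familiar device of splitting the series at a large index $N$ and estimating the tail and the head in different ways. Let $M$ be a bound for $\|f_{n}\|_{\sigma,2}^{2}$ valid for all $n$, which exists by hypothesis.

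For the tail, the key observation is that, since $\sigma'<\sigma$, for every $j\ge N$ we have $e^{2j\sigma'}=e^{2j(\sigma'-\sigma)}e^{2j\sigma}\le e^{2N(\sigma'-\sigma)}e^{2j\sigma}$. Hence $\sum_{j\ge N}\frac{e^{2j\sigma'}}{j!^{2}}\|f_{n}^{(j)}\|_{2}^{2}\le e^{2N(\sigma'-\sigma)}\|f_{n}\|_{\sigma,2}^{2}\le e^{2N(\sigma'-\sigma)}M$. Because $\sigma'-\sigma<0$, given $\varepsilon>0$ one can first choose $N$ \emph{independently of} $n$ so that this tail is smaller than $\varepsilon/2$ for all $n$ simultaneously.

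For the now finite head $\sum_{j=0}^{N-1}\frac{e^{2j\sigma'}}{j!^{2}}\|f_{n}^{(j)}\|_{2}^{2}$, I would use that convergence of $f_{n}$ to $0$ in $H^{\infty}$ means $\|f_{n}\|_{s}\to0$ as $n\to\infty$ for every $s\ge0$. Applying Lemma~\ref{lem:known facts about H^s}(iv) $j$ times gives $\|f_{n}^{(j)}\|_{2}\le\|f_{n}\|_{j+2}\to0$, so each of the finitely many summands tends to $0$ and there is $n_{0}$ with the head below $\varepsilon/2$ for $n\ge n_{0}$. Adding the two estimates yields $\|f_{n}\|_{\sigma',2}^{2}<\varepsilon$ for $n\ge n_{0}$, i.e.\ $\|f_{n}\|_{\sigma',2}\to0$. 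The argument is routine; the only point that needs care is that the tail bound be uniform in $n$, and this is precisely where the strict inequality $\sigma'<\sigma$ together with the uniform bound on $\|f_{n}\|_{\sigma,2}$ enter.
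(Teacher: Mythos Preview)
Your argument is correct: splitting the series at $N$, using $e^{2j\sigma'}\le e^{2N(\sigma'-\sigma)}e^{2j\sigma}$ for $j\ge N$ to make the tail uniformly small, and then using $H^{\infty}$-convergence on the finite head, is exactly the standard proof. The paper does not give its own proof of this lemma---it simply cites \cite{KatoMasuda}---so there is nothing to compare against beyond noting that your approach is the expected one.
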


Following \cite{BHPglobal} (with some generalization and a modified
notation), we introduce 
\[
\|f\|_{(\delta,s)}=\sup_{k\ge0}\frac{\delta^{k}(k+1)^{2}\|f^{(k)}\|_{s}}{k!}\;(0<\delta\le1,s\ge2).
\]
Do not confuse $\|\cdot\|_{(\delta,s)}$ with $\|\cdot\|_{\sigma,s}.$
Moreover, notice that a different system of notation was employed
in \cite{yamane muCH}. We introduce the Banach space $E_{\delta,s}$
by 
\[
E_{\delta,s}=\left\{ f\in\mathcal{C}^{\infty}(\mathbb{R});\,\|f\|_{(\delta,s)}<\infty\right\} .
\]
The property of $E_{\delta,s}$ is similar to that of $G^{\delta,s}$
used in \cite{BHP,BHPpower,BHPglobal} . We do without $G^{\delta,s}$
in the present paper, since $E_{\delta,s}$ is better because of the
following proposition. 
\begin{prop}
\label{prop:embed}(\cite[Lemma 5.1]{BHPglobal}) $E_{\delta,s}$
is continuously embedded in $A(\delta)$. Conversely, if $\delta<r/e$
then $A(r)$ is continuously embedded in $E_{\delta,s}.$
\end{prop}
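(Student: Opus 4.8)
The plan is to prove the two embeddings of Proposition~\ref{prop:embed} separately, using the family of norms from Proposition~\ref{prop:families of norms} to identify $A(r)$ and the Taylor-coefficient bound defining $E_{\delta,s}$.

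\textbf{From $E_{\delta,s}$ into $A(\delta)$.} Suppose $f\in E_{\delta,s}$, so $\|f^{(k)}\|_s\le (k!/((k+1)^2\delta^k))\,\|f\|_{(\delta,s)}$ for all $k$. Since $s\ge 2$ controls $\|\cdot\|_0$ (indeed $\|g\|_0\le\|g\|_s$), I would first estimate, for any $\sigma<\log\delta$,
\[
\|f\|_{\sigma,0}^2=\sum_{k\ge0}\frac{e^{2k\sigma}}{k!^2}\|f^{(k)}\|_0^2
\le \|f\|_{(\delta,s)}^2\sum_{k\ge0}\frac{e^{2k\sigma}}{(k+1)^4\delta^{2k}}
=\|f\|_{(\delta,s)}^2\sum_{k\ge0}\frac{(e^\sigma/\delta)^{2k}}{(k+1)^4}.
\]
Because $e^\sigma/\delta<1$, the series converges (it is dominated by $\sum (k+1)^{-4}$), so $\|f\|_{\sigma,0}<\infty$ for every $\sigma<\log\delta$. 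By Lemma~\ref{lem:frechet}(iii) (with $s=0$) this gives $f\in A(\delta)$, and the displayed inequality shows $\|f\|_{\sigma,0}\le C_\sigma\|f\|_{(\delta,s)}$, which by Proposition~\ref{prop:families of norms}(iv) is continuity of the inclusion $E_{\delta,s}\hookrightarrow A(\delta)$.

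\textbf{From $A(r)$ into $E_{\delta,s}$ when $\delta<r/e$.} Here I would exploit the Cauchy estimates hidden in the $\|\cdot\|_{\sigma,s}$ norms. Fix $f\in A(r)$. Pick $\sigma$ with $\log\delta+1<\sigma<\log r$; this is possible exactly because $\delta<r/e$ means $\log\delta+1<\log r$. By Lemma~\ref{lem:frechet}(ii), $M:=\|f\|_{\sigma,s}<\infty$. For each individual $k$, the definition of $\|f\|_{\sigma,s}$ gives $\dfrac{e^{k\sigma}}{k!}\|f^{(k)}\|_s\le M$, i.e. $\|f^{(k)}\|_s\le M\,k!\,e^{-k\sigma}$. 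Therefore
\[
\frac{\delta^k(k+1)^2\|f^{(k)}\|_s}{k!}\le M\,(k+1)^2\,(\delta e^{-\sigma})^k = M\,(k+1)^2\,(\delta/e^\sigma)^k,
\]
and since $\delta/e^\sigma<\delta/(\delta e)=1/e<1$, the quantity $(k+1)^2(\delta/e^\sigma)^k$ is bounded in $k$ by some constant $C$ depending only on $\delta$ and $\sigma$ (a geometric factor beats the polynomial $(k+1)^2$). Hence $\|f\|_{(\delta,s)}\le CM=C\|f\|_{\sigma,s}$, so $f\in E_{\delta,s}$ and the inclusion is continuous. One small point to handle: the norm $\|\cdot\|_{\sigma,s}$ of Proposition~\ref{prop:families of norms}(ii) is equivalent to $\|\cdot\|_{\sigma,0}$ or $\|\cdot\|_{\sigma,2}$ and this is what genuinely controls $A(r)$, so for $s\ge3$ one may instead start from $\|f\|_{\sigma',2}$ and invoke Lemma~\ref{lem:frechet}(i) to pass to $\|f\|_{\sigma,s}$; this is routine.

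The main obstacle is essentially bookkeeping rather than a genuine difficulty: one must choose the intermediate exponent $\sigma$ in the correct open interval $(\log\delta+1,\log r)$, which is where the hypothesis $\delta<r/e$ is used in an essential way, and one must recognize that the weight $(k+1)^2$ in the definition of $\|\cdot\|_{(\delta,s)}$ is harmless in both directions—it only improves convergence of the series in the first embedding and is swallowed by the geometric decay $(1/e)^k$ in the second. Everything else reduces to comparing a supremum of Taylor coefficients with an $\ell^2$-type sum of the same coefficients, both weighted by powers of $1/k!$.
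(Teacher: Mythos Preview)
Your proof is correct and is precisely the natural argument one would expect. The paper itself does not give an independent proof: it simply remarks that the result is proved in \cite{BHPglobal} for $s=2m$, $m\in\mathbb{Z}_+$, and that the same proof works for general $s\ge 2$; your write-up spells out that argument in full. One very minor remark: your worry at the end about $\|\cdot\|_{\sigma,s}$ versus $\|\cdot\|_{\sigma,2}$ is unnecessary, since Proposition~\ref{prop:families of norms}(ii) already lists every $\|\cdot\|_{\sigma,s}$ with $s\ge 0$ and $\sigma<\log r$ as a defining seminorm on $A(r)$, so $\|f\|_{(\delta,s)}\le C\|f\|_{\sigma,s}$ is directly a continuity estimate.
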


\begin{proof}
In \cite{BHPglobal}, this proposition is stated and proved only in
the case $s=2m,\,m\in\mathbb{Z}_{+}$. The same proof is valid in
the general case.
\end{proof}
\begin{prop}
\label{prop:E estimates}

(i) If $0<\delta'<\delta\le1$ and $2\le s'<s$, then
\[
\|u\|_{(\delta',s)}\le\|u\|_{(\delta,s)},\;\|u\|_{(\delta,s')}\le\|u\|_{(\delta,s)}.
\]

(ii) If $0<\delta'<\delta\le1$, $s\ge2$, then 
\[
\|uv\|_{(\delta,s)}\le C_{s}\|u\|_{(\delta,s)}\|v\|_{(\delta,s)},\;C_{s}=18c_{s}.
\]

(iii) If $0<\delta'<\delta\le1$, we have 
\begin{align*}
 & \|\partial_{x}u\|_{(\delta',s)}\le\frac{1}{\delta-\delta'}\|u\|_{(\delta,s)},\\
 & \|\partial_{x}u\|_{(\delta,s)}\le\|u\|_{(\delta,s+1)},\\
 & \|\Lambda^{-2}\partial_{x}^{p}u\|_{(\delta,s)}\le\|u\|_{(\delta,s)}\;(p=0,1,2),\\
 & \|\Lambda^{-2}\partial_{x}u\|_{(\delta',s)}\le\frac{\|u\|_{(\delta,s)}}{\delta-\delta'}.
\end{align*}

(iv) $\|\Lambda^{-2}u\|_{(\delta,s+2)}=\|u\|_{(\delta,s)}\;(p=0,1,2)$.

(v) $\|\Lambda^{-2}\partial_{x}u\|_{(\delta',s+1)}\le\|\Lambda^{-2}\partial_{x}u\|_{(\delta',s+2)}\le\dfrac{1}{\delta-\delta'}\|u\|_{(\delta,s)}$.
\end{prop}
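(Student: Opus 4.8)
The plan is to reduce each of the five assertions to a pointwise-in-$k$ bound on the Sobolev norms $\|u^{(k)}\|_{s}$, using the defining inequality $\|u^{(k)}\|_{s}\le k!\,\delta^{-k}(k+1)^{-2}\|u\|_{(\delta,s)}$, and then to invoke either an elementary scalar inequality or one of the facts collected in Lemma~\ref{lem:known facts about H^s}. Item (i) needs no work: for each fixed $k$ one has $(\delta')^{k}\le\delta^{k}$ because $0<\delta'<\delta$, and $\|f\|_{s'}\le\|f\|_{s}$ because $(1+\xi^{2})^{s'/2}\le(1+\xi^{2})^{s/2}$ when $s'\le s$, so every factor inside the supremum defining $\|\cdot\|_{(\delta,s)}$ only shrinks.

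Item (ii) is the substantive one and is where the only genuine estimate occurs. Differentiating $uv$ by the Leibniz rule and applying Lemma~\ref{lem:known facts about H^s}(v) to each term $u^{(j)}v^{(k-j)}$ gives
\[
\|(uv)^{(k)}\|_{s}\le c_{s}\sum_{j=0}^{k}\binom{k}{j}\bigl(\|u^{(j)}\|_{s}\|v^{(k-j)}\|_{1}+\|u^{(j)}\|_{1}\|v^{(k-j)}\|_{s}\bigr).
\]
Since $s\ge2>1$ we bound $\|\cdot\|_{1}$ by $\|\cdot\|_{s}$, then replace $\|u^{(j)}\|_{s}$ by $j!\,\delta^{-j}(j+1)^{-2}\|u\|_{(\delta,s)}$ and similarly for the derivatives of $v$; the binomial coefficients absorb the factorials, leaving
\[
\|(uv)^{(k)}\|_{s}\le 2c_{s}\,\frac{k!}{\delta^{k}}\,\|u\|_{(\delta,s)}\|v\|_{(\delta,s)}\sum_{j=0}^{k}\frac{1}{(j+1)^{2}(k-j+1)^{2}}.
\]
The remaining sum is the self-convolution of $\{(j+1)^{-2}\}$; via the partial-fraction identity $\tfrac1{(j+1)(k+1-j)}=\tfrac1{k+2}\bigl(\tfrac1{j+1}+\tfrac1{k+1-j}\bigr)$ it reduces to $\sum_{i}i^{-2}$ and $\tfrac1{k+2}\sum_{i}i^{-1}$, hence is bounded by $C(k+1)^{-2}$ with, comfortably, $C=9$. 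Multiplying by $\delta^{k}(k+1)^{2}/k!$ and taking the supremum over $k$ gives $\|uv\|_{(\delta,s)}\le18c_{s}\|u\|_{(\delta,s)}\|v\|_{(\delta,s)}$. This combinatorial sum is the main obstacle, though it is entirely routine.

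The remaining items follow the same template. For item (iv), $\Lambda^{-2}$ commutes with $\partial_{x}^{k}$, so $(\Lambda^{-2}u)^{(k)}=\Lambda^{-2}u^{(k)}$ and Lemma~\ref{lem:known facts about H^s}(iii) gives $\|\Lambda^{-2}u^{(k)}\|_{s+2}=\|u^{(k)}\|_{s}$, whence the supremum passes through as an equality. The third line of item (iii) is the same argument with the trivial Fourier-multiplier bound $|\xi|^{p}(1+\xi^{2})^{-1}\le1$ ($p=0,1,2$) in place of Lemma~\ref{lem:known facts about H^s}(iii), and the second line of item (iii), $\|\partial_{x}u\|_{(\delta,s)}\le\|u\|_{(\delta,s+1)}$, is Lemma~\ref{lem:known facts about H^s}(iv) applied to $u^{(k)}$. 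The two bounds in (iii) that genuinely lose a bit of radius are the first and fourth lines: writing $(\partial_{x}u)^{(k)}=u^{(k+1)}$, inserting $\|u^{(k+1)}\|_{s}\le(k+1)!\,\delta^{-(k+1)}(k+2)^{-2}\|u\|_{(\delta,s)}$ and simplifying the resulting rational factor in $k$ to $k+1$, one is left with the bound $\delta^{-1}(k+1)(\delta'/\delta)^{k}\|u\|_{(\delta,s)}$, which by the elementary inequality $(k+1)t^{k}(1-t)\le1$ for $0\le t<1$ (maximize $t\mapsto(k+1)(t^{k}-t^{k+1})$) with $t=\delta'/\delta$ is at most $(\delta-\delta')^{-1}\|u\|_{(\delta,s)}$; this yields the first line, and the fourth follows from it together with $\|\Lambda^{-2}g\|_{s}\le\|g\|_{s}$ (or, more cheaply, from the third line of (iii), item (i), and $\delta-\delta'\le\delta\le1$). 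Finally item (v) is item (i) for the first inequality, and for the second it is item (iv) applied to $\partial_{x}u$ followed by the first line of item (iii).
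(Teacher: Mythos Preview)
Your proof is correct and follows the same overall architecture as the paper's: reduce each assertion to a termwise bound on $\|u^{(k)}\|_{s}$, invoke Lemma~\ref{lem:known facts about H^s}, and absorb the leftover combinatorial factor. The only noteworthy differences are in the bookkeeping for (ii) and the first line of (iii). For (ii), the paper shifts a derivative via $\|v^{(\ell)}\|_{1}\le\|v^{(\ell-1)}\|_{s}$, splits off the $\ell=0$ term, and then quotes the inequality $\sum_{\ell=1}^{k}(k-\ell+1)^{-2}\ell^{-3}(k+1)^{2}\le8$ from \cite{HM}; you instead keep the symmetric sum $\sum_{j=0}^{k}(j+1)^{-2}(k-j+1)^{-2}$ and bound it by $9(k+1)^{-2}$ with a partial-fraction argument, which is slightly more self-contained. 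For the first line of (iii), the paper appeals to $\delta'^{k}\delta^{-(k+1)}(k+1)^{3}(k+2)^{-2}\le(\delta-\delta')^{-1}$ from \cite{HM}, whereas you first weaken $(k+1)^{3}(k+2)^{-2}\le k+1$ and then use the elementary bound $(k+1)t^{k}(1-t)\le1$; both routes land on the same estimate. Everything else (the remaining lines of (iii), and (iv)--(v)) matches the paper essentially verbatim.
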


\begin{proof}
The estimates in (i) are trivial. We prove (ii) and (iii) following
\cite{HM}. By Lemma \ref{lem:known facts about H^s} (v), we have
\begin{align*}
\|\partial_{x}^{k}(uv)\|_{s} & \le\sum_{\ell=0}^{k}\binom{k}{\ell}\|u^{(k-\ell)}v^{(\ell)}\|_{s}\\
 & \le c_{s}\sum_{\ell=0}^{k}\binom{k}{\ell}\left(\|u^{(k-\ell)}\|_{s}\|v^{(\ell)}\|_{1}+\|u^{(k-\ell)}\|_{1}\|v^{(\ell)}\|_{s}\right)\\
 & =c_{s}\sum_{\ell=0}^{k}\binom{k}{\ell}\left(\|u^{(k-\ell)}\|_{s}\|v^{(\ell)}\|_{1}+\|u^{(\ell)}\|_{1}\|v\|_{s}^{(k-\ell)}\right).
\end{align*}
Since $s\ge2$, we have $\|v\|_{1}\le\|v\|_{s}\le\|v\|_{(\delta,s)}$
and $\|v^{(\ell)}\|_{1}\le\|v^{(\ell-1)}\|_{s}(\ell\ge1)$. Hence
\begin{align}
\|\partial_{x}^{k}(uv)\|_{s}\le & c_{s}\left(\|u^{(k)}\|_{s}\|v\|_{(\delta,s)}+\sum_{\ell=1}^{k}\binom{k}{\ell}\|u^{(k-\ell)}\|_{s}\|v^{(\ell-1)}\|_{s}\right)\label{eq:10061711}\\
 & +c_{s}\left(\|u\|_{(\delta,s)}\|v^{(k)}\|_{s}+\sum_{\ell=1}^{k}\binom{k}{\ell}\|u^{(\ell-1)}\|_{s}\|v^{(k-\ell)}\|_{s}\right).\nonumber 
\end{align}
What we want to prove is that $\delta^{k}(k+1)^{2}\|\partial_{x}^{k}(uv)\|_{s}/k!$
is bounded by $18c_{s}$ times $\|u\|_{(\delta,s)}\|v\|_{(\delta,s)}$.
In view of the symmetry in the right-hand side of \eqref{eq:10061711},
it is enough to prove \\
(a) $(k!)^{-1}\delta^{k}(k+1)^{2}\|u^{(k)}\|_{s}\|v\|_{(\delta,s)}$
is bounded by $\|u\|_{(\delta,s)}\|v\|_{(\delta,s)}$,\\
(b) $(k!)^{-1}\delta^{k}(k+1)^{2}\sum_{\ell=1}^{k}\binom{k}{\ell}\|u^{(k-\ell)}\|_{s}\|v^{(\ell-1)}\|_{s}$
is bounded by $8\|u\|_{(\delta,s)}\|v\|_{(\delta,s)}$.

The estimate (a) is trivial. Now we show (b). We have
\begin{align*}
 & \frac{\delta^{k}(k+1)^{2}}{k!}\sum_{\ell=1}^{k}\binom{k}{\ell}\|u^{(k-\ell)}\|_{s}\|v^{(\ell-1)}\|_{s}\\
 & =\frac{\delta(k+1)^{2}}{k!}\sum_{\ell=1}^{k}\binom{k}{\ell}\frac{(k-\ell)!(\ell-1)!}{(k-\ell+1)^{2}\ell^{2}}\frac{\delta^{k-\ell}(k-\ell+1)^{2}\|u^{(k-\ell)}\|_{s}}{(k-\ell)!}\frac{\delta^{\ell-1}\ell^{2}\|v^{(\ell-1)}\|_{s}}{(\ell-1)!}\\
 & \le\frac{(k+1)^{2}}{k!}\sum_{\ell=1}^{k}\binom{k}{\ell}\frac{(k-\ell)!(\ell-1)!}{(k-\ell+1)^{2}\ell^{2}}\|u\|_{(\delta,s)}\|v\|_{(\delta,s)}\\
 & \le8\|u\|_{(\delta,s)}\|v\|_{(\delta,s)}.
\end{align*}
Here we have used the fact $\sum_{\ell=1}^{k}(k-\ell+1)^{-2}\ell^{-3}(k+1)^{2}\le8$
(\cite[Lem 2.2]{HM}). The proof of (ii) is complete.

Next we prove (iii). The first inequality follows from
\begin{align*}
\|\partial_{x}u\|_{(\delta',s)} & \le\sup_{k\ge0}\frac{\delta'^{k}(k+1)^{2}\|u^{(k+1)}\|_{s}}{k!}\\
 & =\sup_{k\ge0}\frac{\delta^{k+1}(k+2)^{2}\|u^{(k+1)}\|_{s}}{(k+1)!}\frac{\delta'^{k}}{\delta^{k+1}}\frac{(k+1)^{3}}{(k+2)^{2}}\\
 & \le\|u\|_{(\delta,s)}\sup_{k\ge0}\frac{\delta'^{k}}{\delta^{k+1}}\frac{(k+1)^{3}}{(k+2)^{2}}\le\frac{\|u\|_{(\delta,s)}}{\delta-\delta'}.
\end{align*}
Here we employed $\delta'^{k}\delta^{-(k+1)}(k+1)^{3}(k+2)^{-2}\le1/(\delta-\delta')$
(\cite[(2.7)]{HM}).

The second and third inequalities of (iii) follow from $\|u^{(k+1)}\|_{s}\le\|u^{(k)}\|_{s+1}$
and $\|\Lambda^{-2}\partial_{x}^{p}u\|_{s}\le\|u\|_{s}$ respectively.
The fourth one follows from the first and the third.

The equality (iv) follows from $\|\Lambda^{-2}u\|_{s+2}=\|u\|_{s}$
and (v) follows from (iii) and (iv).
\end{proof}

\section{Local analyticity \label{sec:Local analyticity}}

\subsection{General theory}

We recall some basic facts about the autonomous Ovsyannikov theorem
following \cite{BHP,BHPpower}. Let $\left\{ X_{\delta},\|\cdot\|_{\delta}\right\} _{0<\delta\le1}$
be a (decreasing) scale of Banach spaces, i.e. each $X_{\delta}$
is a Banach space and $X_{\delta}\subset X_{\delta'},\|\cdot\|_{\delta'}\le\|\cdot\|_{\delta}$
for any $0<\delta'<\delta\le1$. For example, $\left\{ E_{\delta,s},\|\cdot\|_{\delta,s}\right\} _{0<\delta\le1}$
is a scale of Banach spaces. Assume that $F\colon X_{\delta}\to X_{\delta'}$
is a mapping satisfying the following conditions.

(a) For any $U_{0}\in X_{1}$ and $R>0$, there exist $L=L(U_{0},R)>0,M=M(U_{0},R)>0$
such that we have 
\begin{align}
\|F(U_{0})\|_{\delta} & \le\frac{M}{1-\delta}\label{eq:Ov Fu0}
\end{align}
if $0<\delta<1$ and 
\begin{equation}
\|F(U)-F(V)\|_{\delta'}\le\frac{L}{\delta-\delta'}\|U-V\|_{\delta}\label{eq:Ov Lipschitz}
\end{equation}
if $0<\delta'<\delta\le1$ and $U,V\in X_{\delta}$ satisfies $\|U-U_{0}\|_{\delta}<R,\|V-U_{0}\|_{\delta}<R$.

(b) If $U(t)$ is a holomorphic function with values in $X_{\delta}$
on the disk $D(0,a(1-\delta))=\left\{ t\in\mathbb{C}\colon|t|<a(1-\delta)\right\} $
for $a>0,0<\delta<1$ satisfying $\sup_{|t|<a(1-\delta)}\|U(t)-U_{0}\|_{\delta}<R$,
then the composite function $F(U(t))$ is a holomorphic function on
$D(0,a(1-\delta))$ with values in $X_{\delta'}$ for any $0<\delta'<\delta$.
The following autonomous Ovsyannikov theorem will be used in the next
section.
\begin{thm}
(\cite{BHP}) \label{thm:Ov}Assume that the mapping $F$ satisfies
the conditions (a) and (b). For any $U_{0}\in X_{1}$ and $R>0$,
set 
\begin{equation}
T=\frac{R}{16LR+8M}.\label{eq:T}
\end{equation}
Then, for any $\delta\in]0,1[$, the Cauchy problem 
\begin{equation}
\frac{dU}{dt}=F(U),\;U(0)=U_{0}\label{eq:Ov CP}
\end{equation}
has a unique holomorphic solution $U(t)$ in the disk $D(0,T(1-\delta))$
with values in $X_{\delta}$ satisfying
\[
\sup_{|t|<T(1-\delta)}\|U(t)-U_{0}\|_{\delta}<R.
\]
\end{thm}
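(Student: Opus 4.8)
The plan is the classical Picard iteration for an abstract Cauchy--Kovalevskaya problem, with the scale structure used to trade each loss of radius for a power of the time variable. Set $U_{0}(t)\equiv U_{0}$ and, for $n\ge0$,
\[
U_{n+1}(t)=U_{0}+\int_{0}^{t}F(U_{n}(s))\,ds ,
\]
the integral taken along the segment $[0,t]$. First I would prove, by induction on $n$, that each $U_{n}$ is holomorphic on $D(0,T(1-\delta))$ with values in $X_{\delta}$ for every $\delta\in(0,1)$ and that $\|U_{n}(t)-U_{0}\|_{\delta}<R$ there. The point for well-definedness is that, given $U_{n}$ with this property, for a fixed $t$ with $|t|<T(1-\delta)$ one picks $\delta_{1}\in(\delta,1)$ with $|t|<T(1-\delta_{1})$; then $U_{n}$ is holomorphic into $X_{\delta_{1}}$ on a disk containing $[0,t]$ and stays in the $R$-ball around $U_{0}$, so condition (b) makes $F(U_{n}(\cdot))$ holomorphic into $X_{\delta}$ there, and the integral defines $U_{n+1}$ on $D(0,T(1-\delta_{1}))$; letting $\delta_{1}\downarrow\delta$ exhausts $D(0,T(1-\delta))$. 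This exhaustion is what prevents the $t$-domains from shrinking to $0$ along the iteration.

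The analytic heart is the estimate on the successive differences $D_{n}(t):=U_{n}(t)-U_{n-1}(t)$. One has $D_{1}(t)=\int_{0}^{t}F(U_{0})\,ds$, so $\|D_{1}(t)\|_{\delta}\le M|t|/(1-\delta)$ by \eqref{eq:Ov Fu0}; and from \eqref{eq:Ov Lipschitz},
\[
\|D_{n+1}(t)\|_{\delta}\le\int_{0}^{|t|}\frac{L}{\delta'-\delta}\,\|D_{n}(s)\|_{\delta'}\,d|s|
\]
for any admissible intermediate radius $\delta'\in(\delta,1)$. Unwinding this recursion down to $D_{1}$ and distributing the radius increments so that the gaps partition the interval $(\delta,1)$ into pieces of size comparable to $(1-\delta)/(n+1)$, the $n$ factors $L/(\delta'-\delta)$ produce roughly $\bigl(L(n+1)/(1-\delta)\bigr)^{n}$, the iterated time integral over the simplex produces $|t|^{n+1}/(n+1)!$, and Stirling's bound $(n+1)^{n+1}\le e^{n+1}(n+1)!$ collapses the product to a geometric estimate of the form
\[
\|D_{n+1}(t)\|_{\delta}\le\frac{M}{L}\Bigl(\frac{cL|t|}{1-\delta}\Bigr)^{n+1}
\]
with $c$ an absolute constant. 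The role of the choice \eqref{eq:T} is precisely to guarantee that for $|t|<T(1-\delta)$ the ratio $cL|t|/(1-\delta)$ is $<1/2$ and the partial sums $\sum_{k\le n}\|D_{k}(t)\|_{\delta}$ stay $<R$; this last fact keeps the hypothesis of \eqref{eq:Ov Lipschitz} (that the iterates lie in the $R$-ball around $U_{0}$) valid at every stage, so the induction closes.

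Granting these bounds, $U_{n}(t)\to U(t)$ uniformly on each $D(0,T(1-\delta))$ in the $X_{\delta}$-norm; the limit is holomorphic into $X_{\delta}$ as a uniform limit of holomorphic functions, satisfies $\sup_{|t|<T(1-\delta)}\|U(t)-U_{0}\|_{\delta}<R$, and, passing to the limit in the integral equation and using \eqref{eq:Ov Lipschitz} once more to control $F(U_{n})-F(U)$, solves \eqref{eq:Ov CP}. Uniqueness follows from the same Lipschitz estimate applied to $W=U-V$ for two solutions: one gets $\|W(t)\|_{\delta}\le\int_{0}^{|t|}\tfrac{L}{\delta'-\delta}\|W(s)\|_{\delta'}\,d|s|$, and iterating (splitting the radius as above) forces $W\equiv0$ on a slightly smaller disk, hence on all of $D(0,T(1-\delta))$ by analyticity. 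The main obstacle, and the only genuinely delicate point, is the bookkeeping in the difference estimate: one must split the radius budget and the nested time integrations simultaneously and track constants precisely enough that the resulting series both converges and sums to less than $R$ on the whole disk $D(0,T(1-\delta))$; everything else is routine.
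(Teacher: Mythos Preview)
The paper does not prove this theorem; it is quoted from \cite{BHP} and used as a black box, so there is no ``paper's own proof'' to compare against. Your sketch is the standard Ovsyannikov--Nirenberg--Nishida Picard iteration, which is exactly the method used in \cite{BHP}: one defines the successive approximations, proves by induction the a priori bound $\|U_{n}(t)-U_{n-1}(t)\|_{\delta}\le (M/L)\bigl(C L|t|/(1-\delta)\bigr)^{n}$ for an explicit absolute constant $C$ by trading radius for time at each step, and checks that the choice \eqref{eq:T} makes the series converge with sum below $R$. Your outline is correct in spirit; the only point you have left implicit is the precise choice of intermediate radii (for instance $\delta_{k}=\delta+\tfrac{k}{n+1}(1-\delta)$, or the Safonov scheme with $\delta'$ depending on $|s|$) and the verification that the resulting constant matches the $16$ and $8$ in \eqref{eq:T}, but you have identified this as the delicate step yourself and the argument goes through.
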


\subsection{Local analyticity for the Camassa-Holm system}

We consider the analytic Cauchy problem for the generalized CH system
\eqref{eq:2CHr-1}, namely,
\begin{equation}
\begin{cases}
 & u_{t}+\beta uu_{x}+\Lambda^{-2}\partial_{x}\left[-\alpha u+\dfrac{3-\beta}{2}u^{2}+\dfrac{\beta}{2}u_{x}^{2}+v+\dfrac{1}{2}v^{2}\right]=0,\\
 & v_{t}+u_{x}+(uv)_{x}=0,\\
 & u(0,x)=u_{0}(x),\\
 & v(0,x)=v_{0}(x).
\end{cases}\label{eq:2CH CP}
\end{equation}

\begin{thm}
\label{thm:CPlocal}Let $s\ge2$. If $u_{0},v_{0}\in E_{1,s+1}$,
then there exists a positive time $T=T(u_{0},v_{0};s)$ such that
for every $\delta\in]0,1[$, the Cauchy problem \eqref{eq:2CH CP}
has a unique solution which is a holomorphic function valued in $\oplus^{2}E_{\delta,s+1}$
in the disk $D(0,T(1-\delta))$. Furthermore, the analytic lifespan
$T$ satisfies 
\[
T\approx\frac{\mathrm{const.}}{\|(u_{0},v_{0})\|_{(1,s+1)}}
\]
 for large initial values and 
\[
T\approx\mathrm{const.}
\]
for small initial values. 
\end{thm}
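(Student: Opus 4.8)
The plan is to cast the Cauchy problem \eqref{eq:2CH CP} in the abstract form \eqref{eq:Ov CP} by setting $U=(u,v)$, working in the scale of Banach spaces $X_\delta=\oplus^2 E_{\delta,s+1}$ with $\|U\|_\delta=\|u\|_{(\delta,s+1)}+\|v\|_{(\delta,s+1)}$, and defining
\[
F(U)=\bigl(-\beta uu_x-\Lambda^{-2}\partial_x[-\alpha u+\tfrac{3-\beta}{2}u^2+\tfrac{\beta}{2}u_x^2+v+\tfrac12 v^2],\; -u_x-(uv)_x\bigr).
\]
Then Theorem \ref{thm:CPlocal} follows from Theorem \ref{thm:Ov} once we verify hypotheses (a) and (b). The bulk of the work is (a). For the bound \eqref{eq:Ov Fu0} on $\|F(U_0)\|_\delta$, I would expand $F(U_0)$ term by term and apply Proposition \ref{prop:E estimates}: the algebraic (Moser-type) estimate (ii) handles $uu_x$, $u^2$, $u_x^2$, $v^2$, $uv$; the smoothing estimates (iii)--(v) handle the $\Lambda^{-2}\partial_x$ factors and the bare $\partial_x$; note that the loss-of-derivative terms $\beta uu_x$, $u_x$, $(uv)_x$ cost a factor $1/(\delta-\delta')$, which at $\delta'\to$ something fixed or via the first inequality in (iii) becomes the $1/(1-\delta)$ in \eqref{eq:Ov Fu0} after bounding $\|u_0\|_{(1,s+1)}$ from the hypothesis $u_0,v_0\in E_{1,s+1}$. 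The constant $M$ is then a polynomial (quadratic) in $\|(u_0,v_0)\|_{(1,s+1)}$.

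For the Lipschitz estimate \eqref{eq:Ov Lipschitz}, I would write $F(U)-F(V)$ and exploit that every nonlinearity is at most quadratic, so differences factor as e.g. $uu_x-\tilde u\tilde u_x=(u-\tilde u)u_x+\tilde u(u-\tilde u)_x$ and similarly for $u^2-\tilde u^2$, $u_x^2-\tilde u_x^2$, $v^2-\tilde v^2$, $uv-\tilde u\tilde v$. Applying Proposition \ref{prop:E estimates} (ii)--(v) to each factor, and using the ball conditions $\|U-U_0\|_\delta<R$, $\|V-U_0\|_\delta<R$ to bound the ``frozen'' factors by $R+\|(u_0,v_0)\|_{(1,s+1)}$, one gets
\[
\|F(U)-F(V)\|_{\delta'}\le\frac{L}{\delta-\delta'}\|U-V\|_\delta
\]
with $L$ affine in $R$ and in $\|(u_0,v_0)\|_{(1,s+1)}$; here the $1/(\delta-\delta')$ comes precisely from the derivative-losing terms $\beta uu_x$, $u_x$, $(uv)_x$ via the first inequality in (iii) and from the fourth inequality in (iii) / the inequality in (v) for the $\Lambda^{-2}\partial_x$ terms. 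Condition (b), holomorphy of $t\mapsto F(U(t))$, is routine: $F$ is a (finite) sum of continuous bilinear maps $E_{\delta,s+1}\times E_{\delta,s+1}\to E_{\delta',s+1}$ composed with the bounded linear operators $\partial_x$ and $\Lambda^{-2}\partial_x^p$, so it sends holomorphic $X_\delta$-valued functions to holomorphic $X_{\delta'}$-valued functions; this is the same argument as in \cite{BHP}.

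With (a) and (b) in hand, Theorem \ref{thm:Ov} gives, for each $\delta\in]0,1[$, a unique holomorphic solution on $D(0,T(1-\delta))$ valued in $\oplus^2 E_{\delta,s+1}$, with $T=R/(16LR+8M)$. To get the stated asymptotics of the lifespan I would optimize in $R$: since $L\approx a_1+b_1(R+N)$ and $M\approx a_2+b_2 N+b_3 N^2$ with $N=\|(u_0,v_0)\|_{(1,s+1)}$, choosing $R\sim N$ for large $N$ makes $16LR+8M\sim\mathrm{const}\cdot N^2$ while $R\sim N$, hence $T\sim\mathrm{const}/N$; for small $N$, choosing $R$ a fixed constant makes both $16LR$ and $8M$ bounded and $T$ bounded below by a constant. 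The main obstacle is the careful bookkeeping in (a): the system is not symmetric in $u$ and $v$ (it is genuinely second order in $u$ through the $u_x^2$, $uu_{xxx}$ coming via $\Lambda^{-2}$ and first order in $v$), so the cancellation of derivative losses must be tracked term by term, and one must be sure that working in a single space $E_{\delta,s+1}$ for both components (rather than different Sobolev indices for $u$ and $v$ as in the Sobolev theory) really closes — this works because $\Lambda^{-2}\partial_x$ is smoothing of order one, exactly compensating the extra derivative on $u$, and this is where Proposition \ref{prop:E estimates} (iv)--(v) does the essential job.
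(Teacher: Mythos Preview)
Your proposal is correct and follows essentially the same route as the paper: the same scale $X_\delta=\oplus^2 E_{\delta,s+1}$, the same $F$, term-by-term verification of the Ovsyannikov hypotheses via Proposition~\ref{prop:E estimates}, with the $u_x^2$ term handled through (v) so that $\Lambda^{-2}\partial_x$ absorbs the extra derivative. The only cosmetic difference is the lifespan step: the paper makes the single choice $R=\|(u_0,v_0)\|_{(1,s+1)}$, which yields $T=1/(\gamma_1 R+\gamma_2)$ and covers both regimes at once, whereas you describe a two-case optimization in $R$; both arguments are valid and give the same asymptotics.
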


\begin{proof}
For $(u,v)\in\oplus^{2}E_{\delta,s+1}$, set $\|(u,v)\|_{(\delta,s+1)}=\|u\|_{(\delta,s+1)}+\|v\|_{(\delta,s+1)}$.
We employ the same notation for the norms on $E_{\delta,s+1}$ and
$\oplus^{2}E_{\delta,s+1}$. Notice that $\left\{ \oplus^{2}E_{\delta,s}\right\} _{0<\delta\le1}$
is another scale of Banach spaces. Set
\begin{align}
F_{1}(u,v) & =-\beta uu_{x}-\Lambda^{-2}\partial_{x}\left[-\alpha u+\frac{3-\beta}{2}u^{2}+\frac{\beta}{2}u_{x}^{2}+v+\frac{1}{2}v^{2}\right],\label{eq:F1}\\
F_{2}(u,v) & =-u_{x}-(uv)_{x}.\label{eq:F2}
\end{align}
We want to estimate $F_{j}(u,v)-F_{j}(u',v')$ and $F_{j}(u_{0},v_{0})$
by using Proposition \ref{prop:E estimates} when $u_{0},v_{0}\in E_{1,s+1}$
and 
\begin{equation}
\|(u,v)-(u_{0},v_{0})\|_{(\delta,s+1)}<R,\;\|(u',v')-(u_{0},v_{0})\|_{(\delta,s+1)}<R.\label{eq:930-1}
\end{equation}
First we consider $F_{2}$. By using
\begin{equation}
\|u\|_{(\delta,s+1)}\le\|(u,v)\|_{(\delta,s+1)}\le\|(u_{0},v_{0})\|_{(1,s+1)}+R\label{eq:triangle-R}
\end{equation}
and similar estimates on $u',v$ and $v'$, we get
\begin{align*}
\|uv-u'v'\|_{(\delta,s+1)} & \le\|(u-u')v\|_{(\delta,s+1)}+\|u'(v-v')\|_{(\delta,s+1)}\\
 & \le C_{s+1}\|u-u'\|_{(\delta,s+1)}\|v\|_{(\delta,s+1)}+C_{s+1}\|u'\|_{(\delta,s+1)}\|v-v'\|_{(\delta,s+1)}\\
 & \le C_{s+1}\left[\|(u_{0},v_{0})\|_{(1,s+1)}+R\right]\|(u',v')-(u_{0},v_{0})\|_{(\delta,s+1)}.
\end{align*}
Therefore
\begin{align}
 & \|(uv)_{x}-(u'v')_{x}\|_{(\delta',s+1)}\label{eq:930-2}\\
 & \le\frac{C_{s+1}\left[\|(u_{0},v_{0})\|_{(1,s+1)}+R\right]}{\delta-\delta'}\|(u,v)-(u',v')\|_{(\delta,s+1)}.\nonumber 
\end{align}
 the other hand, we have
\begin{equation}
\|u_{x}-u'_{x}\|_{(\delta',s+1)}\le\frac{\|u-u'\|_{(\delta,s+1)}}{\delta-\delta'}.\label{eq:930-3}
\end{equation}
By using \eqref{eq:930-2} and \eqref{eq:930-3}, we obtain
\begin{align}
 & \|F_{2}(u,v)-F_{2}(u',v')\|_{(\delta',s+1)}\label{eq:F2estimate}\\
 & \le\frac{C_{s+1}\left[\|(u_{0},v_{0})\|_{(1,s+1)}+R\right]+1}{\delta-\delta'}\|(u,v)-(u',v')\|_{(\delta,s+1)}.\nonumber 
\end{align}
Next, we consider $F_{1}$. By \eqref{eq:triangle-R}, we get 
\begin{align}
\|u+u'\|_{(\delta,s+1)} & \le2\left[\|(u_{0},v_{0})\|_{(1,s+1)}+R\right],\label{eq:triangle-2R}\\
\|u^{2}-u'^{2}\|_{(\delta,s+1)} & \le C_{s+1}\|u+u'\|_{(\delta,s+1)}\|u-u'\|_{(\delta,s+1)}\label{eq:930-4}\\
 & \le2C_{s+1}\left[\|u_{0}\|_{(1,s+1)}+R\right]\|u-u'\|_{(\delta,s+1)}.\nonumber 
\end{align}
Since $2uu_{x}-2u'u'_{x}=(u^{2}-u'^{2})_{x}$, we get 
\begin{equation}
\|uu_{x}-u'u'_{x}\|_{(\delta',s+1)}\le\frac{C_{s+1}\left[\|u_{0}\|_{(1,s+1)}+R\right]}{\delta-\delta'}\|u-u'\|_{(\delta,s+1)}.\label{eq:930-5}
\end{equation}
On the other hand, Proposition \ref{prop:E estimates} (iii) implies
\begin{align}
\|\Lambda^{-2}\partial_{x}(u-u')\|_{(\delta',s+1)} & \le\frac{1}{\delta-\delta'}\|u-u'\|_{(\delta,s+1)},\label{eq:930-6}\\
\|\Lambda^{-2}\partial_{x}(v-v')\|_{(\delta',s+1)} & \le\frac{1}{\delta-\delta'}\|v-v'\|_{(\delta,s+1)}.\label{eq:1016-2}
\end{align}
Similarly, the estimate \eqref{eq:930-4} implies
\begin{align}
\|\Lambda^{-2}\partial_{x}(u^{2}-u'^{2})\|_{(\delta',s+1)} & \le\frac{1}{\delta-\delta'}\|u^{2}-u'^{2}\|_{(\delta,s+1)}\label{eq:930-7}\\
 & \le\frac{2C_{s+1}\left[\|u_{0}\|_{(1,s+1)}+R\right]}{\delta-\delta'}\|u-u'\|_{(\delta,s+1)}\nonumber 
\end{align}
and 
\begin{align}
\|\Lambda^{-2}\partial_{x}(v^{2}-v'^{2})\|_{(\delta',s+1)} & \le\frac{2C_{s+1}\left[\|v_{0}\|_{(1,s+1)}+R\right]}{\delta-\delta'}\|v-v'\|_{(\delta,s+1)}.\label{eq:1016-3}
\end{align}
What remains is the estimate on $\Lambda^{-2}\partial_{x}(u_{x}^{2}-u_{x}'^{2})$.

By using \eqref{eq:triangle-2R}, Proposition \ref{prop:E estimates}
(v) and 
\begin{align*}
\|u_{x}^{2}-u_{x}'^{2}\|_{(\delta,s)} & \le C_{s}\|(u+u')_{x}\|_{(\delta,s)}\|(u-u')_{x}\|_{(\delta,s)}\\
 & \le C_{s}\|u+u'\|_{(\delta,s+1)}\|u-u'\|_{(\delta,s+1)},
\end{align*}
 we obtain
\begin{equation}
\|\Lambda^{-2}\partial_{x}(u_{x}^{2}-u_{x}'^{2})\|_{(\delta',s+1)}\le\frac{2C_{s}\left[\|u_{0}\|_{(1,s+1)}+R\right]}{\delta-\delta'}\|u-u'\|_{(\delta,s+1)}.\label{eq:1016-1}
\end{equation}
Combining \eqref{eq:930-6}, \eqref{eq:1016-2}, \eqref{eq:930-7},
\eqref{eq:1016-3} and \eqref{eq:1016-1}, we obtain
\begin{align}
 & \|F_{1}(u,v)-F_{1}(u',v')\|_{(\delta',s+1)}\label{eq:F1 estimate}\\
 & \le\frac{C'_{s}(2|\beta|+|3-\beta|+1)\left[\|(u_{0},v_{0})\|_{(1,s+1)}+R\right]+|\alpha|+1}{\delta-\delta'}\nonumber \\
 & \quad\times\|(u,v)-(u',v')\|_{(\delta,s+1)},\nonumber 
\end{align}
where $C_{s}'=\max[C_{s},C_{s+1}]$. The two estimates \eqref{eq:F1 estimate}
and \eqref{eq:F2estimate} mean that we have 
\begin{equation}
\|(F_{1},F_{2})(u,v)-(F_{1},F_{2})(u',v')\|_{(\delta',s)}\le\frac{L}{\delta-\delta'}\|(u,v)-(u',v')\|_{(\delta,s)},\label{eq:L ineq}
\end{equation}
where
\begin{equation}
L=C_{s}'\left(2|\beta|+|3-\beta|+2\right)\left[\|(u_{0},v_{0})\|_{(1,s+1)}+R\right]+|\alpha|+2,\label{eq:L def}
\end{equation}
if $\|(u,v)-(u_{0},v_{0})\|_{(\delta,s+1)}<R,\;\|(u',v')-(u_{0},v_{0})\|_{(\delta,s+1)}<R$.

Next, we evaluate what corresponds to the constant $M$ in the general
theory. We have
\begin{align*}
\|u_{0}(u_{0})_{x}\|_{(\delta,s+1)} & \le\frac{(C_{s+1}/2)\|u_{0}\|_{(1,s+1)}^{2}}{1-\delta},\allowdisplaybreaks\\
\|\Lambda^{-2}\partial_{x}u_{0}\|_{(\delta,s+1)} & \le\frac{\|u_{0}\|_{(1,s+1)}}{1-\delta},\allowdisplaybreaks\\
\|\Lambda^{-2}\partial_{x}u_{0}^{2}\|_{(\delta,s+1)} & \le\frac{\|u_{0}^{2}\|_{(1,s+1)}}{1-\delta}\le\frac{C_{s+1}\|u_{0}\|_{(1,s+1)}^{2}}{1-\delta},\allowdisplaybreaks\\
\|\Lambda^{-2}\partial_{x}(\partial_{x}u_{0})^{2}\|_{(\delta,s+1)} & \le\frac{\|(\partial_{x}u_{0})^{2}\|_{(1,s)}}{1-\delta}\le\frac{C_{s+1}\|u_{0}\|_{(1,s+1)}^{2}}{1-\delta},\allowdisplaybreaks\\
\|\Lambda^{-2}\partial_{x}v_{0}\|_{(\delta,s+1)} & \le\frac{\|\Lambda^{-2}v_{0}\|_{(1,s+1)}}{1-\delta}\le\frac{\|v_{0}\|_{(1,s+1)}}{1-\delta},\allowdisplaybreaks\\
\|\Lambda^{-2}\partial_{x}v_{0}^{2}\|_{(\delta,s+1)} & \le\frac{\|v_{0}^{2}\|_{(1,s+1)}}{1-\delta}\le\frac{C_{s+1}\|v_{0}\|_{(1,s+1)}^{2}}{1-\delta}.\allowdisplaybreaks
\end{align*}
These inequalities lead to 
\begin{align}
\|F_{1}(u_{0},v_{0})\|_{(\delta,s+1)}\le & \frac{M_{1}}{1-\delta},\label{eq:0113-1}\\
M_{1}= & \frac{C_{s+1}}{2}(2|\beta|+|3-\beta|)\|u_{0}\|_{(1,s+1)}^{2}+|\alpha|\|u_{0}\|_{(1,s+1)}\nonumber \\
 & +\|v_{0}\|_{(1,s+1)}+\frac{C_{s+1}}{2}\|v_{0}\|_{(1,s+1)}^{2}.\nonumber 
\end{align}
On the other hand, we have 
\begin{align*}
\|(u_{0})_{x}\|_{(\delta,s+1)} & \le\frac{\|u_{0}\|_{(1,s+1)}}{1-\delta},\allowdisplaybreaks\\
\|(u_{0}v_{0})_{x}\|_{(\delta,s+1)} & \le\frac{\|u_{0}v_{0}\|_{(1,s+1)}}{1-\delta}\le\frac{C_{s+1}\|u_{0}\|_{(1,s+1)}\|v_{0}\|_{(1,s+1)}}{1-\delta}\allowdisplaybreaks\\
 & \le\frac{(C_{s+1}/2)(\|u_{0}\|_{(1,s+1)}+\|v_{0}\|_{(1,s+1)})^{2}}{1-\delta}\allowdisplaybreaks\\
 & =\frac{(C_{s+1}/2)\|(u_{0},v_{0})\|_{(1,s+1)}^{2}}{1-\delta}.
\end{align*}
Hence 
\begin{align}
 & \|F_{2}(u_{0},v_{0})\|_{(\delta,s+1)}\le\frac{M_{2}}{1-\delta},\label{eq:0113-2}\\
 & M_{2}=\|u_{0}\|_{1,s+1}+\frac{C_{s+1}}{2}\|(u_{0},v_{0})\|_{1,s+1}^{2}.\nonumber 
\end{align}
Finally by combining \eqref{eq:0113-1} and \eqref{eq:0113-2} , we
obtain
\begin{align}
 & \|(F_{1},F_{2})(u_{0},v_{0})\|_{(\delta,s+1)}\le\frac{M}{1-\delta},\label{eq:M ineq}\\
 & M=\frac{C_{s+1}}{2}(2|\beta|+|3-\beta|+2)\|(u_{0},v_{0})\|_{1,s+1}^{2}+(|\alpha|+2)\|(u_{0},v_{0})\|_{1,s+1}.\label{eq:M def}
\end{align}
We can apply the general theory with $L,M$ as in \eqref{eq:L def}
and \eqref{eq:M def}. We set $R=\|(u_{0},v_{0})\|_{1,s+1}$. Then
\[
T=\frac{R}{16LR+8M}=\frac{R}{(\gamma_{1}R+\gamma_{2})R}=\frac{1}{\gamma_{1}R+\gamma_{2}}
\]
for some positive constants $\gamma_{1},\gamma_{2}$ depending only
on $\alpha,\beta,C_{s},C_{s+1}$. We have $T\to1/\gamma_{2}\ne0$
as $\|(u_{0},v_{0})\|_{1,s+1}=R\to0$ and $T\approx1/(\gamma_{1}R)$
as $\|(u_{0},v_{0})\|_{1,s+1}=R\to\infty$.
\end{proof}
In Theorem \ref{thm:CPlocal}, we assumed the initial values $u_{0}$
and $v_{0}$ were in $E_{1,s+1}$. We can relax this assumption as
in the following theorem. 
\begin{thm}
\label{thm:CP local2}Let $0<\Delta\le1,s\ge2$. If $(u_{0},v_{0})\in\oplus^{2}E_{\Delta,s+1}$,
then there exists $T_{\Delta}>0$ such that the Cauchy problem \eqref{eq:2CH CP}
has a unique solution which is a holomorphic function valued in $\oplus^{2}E_{\Delta d,s+1}$
in the disk $D(0,T_{\Delta}(1-d))$ for every $d\in]0,1[$.
\end{thm}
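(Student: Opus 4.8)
The plan is to deduce Theorem \ref{thm:CP local2} from the Ovsyannikov machinery of Theorem \ref{thm:Ov} by \emph{rescaling the scale of Banach spaces}, rather than rescaling the equation itself (the presence of $\Lambda^{-2}=(1-\partial_x^2)^{-1}$ makes a direct dilation of the $x$-variable unattractive). Concretely, I would fix $\Delta$ with $0<\Delta\le1$ and replace the scale $\{\oplus^2 E_{\delta,s+1}\}_{0<\delta\le1}$ used in the proof of Theorem \ref{thm:CPlocal} by
\[
X_d:=\oplus^2 E_{\Delta d,\,s+1},\qquad 0<d\le 1 .
\]
Since $0<\Delta d\le 1$ this is well defined, and Proposition \ref{prop:E estimates} (i) shows that $X_d\subset X_{d'}$ with $\|\cdot\|_{X_{d'}}\le\|\cdot\|_{X_d}$ for $0<d'<d\le 1$; hence $\{X_d\}_{0<d\le1}$ is again a decreasing scale of Banach spaces, now with $X_1=\oplus^2 E_{\Delta,s+1}$, which is exactly the space the datum $(u_0,v_0)$ is assumed to lie in. The whole point is that the abstract conditions (a), (b) preceding Theorem \ref{thm:Ov} are insensitive to this affine relabelling of the scale parameter.

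Next I would re-run the estimates from the proof of Theorem \ref{thm:CPlocal}, with every occurrence of $\delta,\delta'$ replaced by $\Delta d,\Delta d'$. Two observations make this essentially automatic. First, all the bilinear bounds there rest on Proposition \ref{prop:E estimates} (ii)--(iv), whose constants ($C_s$, and the constant $1$ in $\|\Lambda^{-2}\partial_x^p u\|_{(\delta,s)}\le\|u\|_{(\delta,s)}$) do not depend on the scale parameter, so these are unchanged. Second, the only estimates producing a loss factor are those of Proposition \ref{prop:E estimates} (iii), (v), and under the substitution that factor becomes $(\Delta d-\Delta d')^{-1}=\Delta^{-1}(d-d')^{-1}$. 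Carrying the harmless constant $\Delta^{-1}$ through the computations leading to \eqref{eq:L ineq} and \eqref{eq:M ineq} yields, for $(u,v),(u',v')\in X_d$ with $\|(u,v)-(u_0,v_0)\|_{X_d}<R$ and $\|(u',v')-(u_0,v_0)\|_{X_d}<R$, a Lipschitz bound $\|F(u,v)-F(u',v')\|_{X_{d'}}\le L_\Delta(d-d')^{-1}\|(u,v)-(u',v')\|_{X_d}$ and a bound $\|F(u_0,v_0)\|_{X_d}\le M_\Delta(1-d)^{-1}$, where $L_\Delta$, $M_\Delta$ are the constants \eqref{eq:L def}, \eqref{eq:M def} with $\|(u_0,v_0)\|_{(1,s+1)}$ replaced by $\|(u_0,v_0)\|_{(\Delta,s+1)}$ and then multiplied by $\Delta^{-1}$ (the $M$-type bound now uses $\|\Lambda^{-2}\partial_x u_0\|_{(\Delta d,s+1)}\le\|u_0\|_{(\Delta,s+1)}/(\Delta(1-d))$ in place of the $\delta=1$ version, and likewise for the $u_0^2$, $(\partial_x u_0)^2$, $v_0$, $v_0^2$, $u_0(u_0)_x$ and $(u_0v_0)_x$ terms). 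Condition (b), the holomorphy of $t\mapsto F(U(t))$ into $X_{d'}$ whenever $t\mapsto U(t)$ is holomorphic into $X_d$, holds for the same reason as in Theorem \ref{thm:CPlocal}, since it reflects only the polynomial-plus-$\Lambda^{-2}\partial_x$ structure of $F=(F_1,F_2)$ and not the particular range of the scale index.

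With (a), (b) established for $\{X_d\}$, I would apply Theorem \ref{thm:Ov} with $R=\|(u_0,v_0)\|_{(\Delta,s+1)}$ and
\[
T_\Delta=\frac{R}{16L_\Delta R+8M_\Delta}>0,
\]
which produces, for each $d\in\,]0,1[$, a unique holomorphic solution of \eqref{eq:2CH CP} on $D(0,T_\Delta(1-d))$ with values in $X_d=\oplus^2 E_{\Delta d,s+1}$ and $\sup_{|t|<T_\Delta(1-d)}\|(u,v)(t)-(u_0,v_0)\|_{X_d}<R$; this is precisely the assertion. I do not anticipate a genuine obstacle: the substance is simply that the Ovsyannikov scheme is invariant under the reparametrisation $\delta=\Delta d$ of the scale, and the only bookkeeping item is to track the benign factor $\Delta^{-1}$ coming from the rescaled loss $(\Delta(d-d'))^{-1}$; no estimate beyond those already proved for Theorem \ref{thm:CPlocal} is needed.
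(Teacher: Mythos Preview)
Your proposal is correct and follows essentially the same approach as the paper: define the rescaled scale $X_d=\oplus^2 E_{\Delta d,s+1}$, substitute $\delta=\Delta d$, $\delta'=\Delta d'$ in the estimates from the proof of Theorem~\ref{thm:CPlocal}, absorb the resulting factor $\Delta^{-1}$ into the constants $L_\Delta$ and $M_\Delta$, and invoke Theorem~\ref{thm:Ov}. Your write-up is in fact more detailed than the paper's own proof, which records only the formulas for $L_\Delta$ and $M_\Delta$ and leaves the rest implicit.
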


\begin{proof}
Set $X_{d}=\oplus^{2}E_{\Delta d,s+1},\|\cdot\|_{(d,s+1)}^{(\Delta)}=\|\cdot\|_{(\Delta d,s+1)}$.
Then $\left\{ X_{d},\|\cdot\|_{(d,s+1)}^{(\Delta)}\right\} _{0<d\le1}$
is a (decreasing) scale of of Banach spaces and $(u_{0},v_{0})\in X_{1}$.
Assume $\|u-u_{0}\|_{(d,s+1)}^{(\Delta)}<R,\|v-u_{0}\|_{(d,s+1)}^{(\Delta)}<R$
and $0<d'<d\le1$. Then by setting $\delta=\Delta d,\delta'=\Delta d'$,
we obtain the following counterpart to \eqref{eq:L def}: 
\begin{equation}
\|(F_{1},F_{2})(u,v)-(F_{1},F_{2})(u',v')\|_{(d',s)}^{(\Delta)}\le\frac{L_{\Delta}}{d-d'}\|(u,v)-(u',v')\|_{(d,s)}^{(\Delta)}.\label{eq:L ineq-1}
\end{equation}
Here 
\begin{equation}
L_{\Delta}=C'_{s}\Delta^{-1}\left(2|\beta|+|3-\beta|+2\right)\left[\|(u_{0},v_{0})\|_{(1,s+1)}^{(\Delta)}+R\right]+|\alpha|+1,\label{eq:L def-1}
\end{equation}
and $\|(u,v)-(u_{0},v_{0})\|_{(d,s+1)}^{(\Delta)}<R,\;\|(u',v')-(u_{0},v_{0})\|_{(d,s+1)}^{(\Delta)}<R$.
On the other hand, we have 
\begin{align*}
 & \|(F_{1},F_{2})(u_{0},v_{0})\|_{d,s+1}^{(\Delta)}\le\frac{M_{\Delta}}{1-d},\\
 & M_{\Delta}=\frac{C_{s+1}}{2\Delta}(2|\beta|+|3-\beta|+2)\left(\|(u_{0},v_{0})\|_{1,s+1}^{(\Delta)}\right)^{2}+\frac{|\alpha|+2}{\Delta}\|(u_{0},v_{0})\|_{1,s+1}^{(\Delta)}.\qedhere
\end{align*}
\end{proof}

\section{Global analyticity\label{sec:Global-analyticity}}

\subsection{Main result}

We recall a known result about global-in-time solutions to \eqref{eq:2CH CP}
in Sobolev spaces. We have
\begin{thm}
(\cite[Theorems 3.1, 5.1]{ChenLiu})\label{thm:Chen Liu} Assume $0<\beta<2$
and let $s>3/2$. If $(u_{0},v_{0})\in H^{s}(\mathbb{\mathbb{R}})\times H^{s-1}(\mathbb{\mathbb{R}})$
and $\inf_{x\in\mathbb{R}}v_{0}(x)>-1$, then \eqref{eq:2CH CP} has
a unique solution $(u,v)$ in the space $\mathcal{C}([0,\infty),H^{s}(\mathbb{\mathbb{R}})\times H^{s-1}(\mathbb{\mathbb{R}}))\cap\mathcal{C}^{1}([0,\infty),H^{s-1}(\mathbb{\mathbb{R}})\times H^{s-2}(\mathbb{\mathbb{R}}))$. 
\end{thm}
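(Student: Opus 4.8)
This is \cite[Theorems 3.1, 5.1]{ChenLiu}. The plan is the classical three-step programme for Camassa--Holm-type equations: (1) short-time well-posedness in $H^{s}\times H^{s-1}$; (2) a continuation criterion reducing globality to an $L^{1}_{t}L^{\infty}_{x}$ bound on $u_{x}$; (3) \emph{a priori} estimates --- one conserved energy together with the transported positivity of the density $\rho=1+v$ --- that supply that bound. I would work with \eqref{eq:2CHr-1}, put $U=(u,v)$, and view it as the quasilinear transport system $\partial_{t}U+A(U)\,\partial_{x}U=\Phi(U)$ with
\[
A(U)=\begin{pmatrix}\beta u & 0\\ 1+v & u\end{pmatrix},
\]
$\Phi(U)$ gathering the zeroth-order terms. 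The only dangerous term of $\Phi$, namely $\Lambda^{-2}\partial_{x}\bigl[-\alpha u+\tfrac{3-\beta}{2}u^{2}+\tfrac{\beta}{2}u_{x}^{2}+v+\tfrac12v^{2}\bigr]$, \emph{gains} one derivative by Lemma \ref{lem:known facts about H^s}(iii)--(iv); this is what renders the order mismatch between the $u$- and $v$-equations harmless and makes $H^{s}\times H^{s-1}$ the natural space.

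For Step 1 I would run the Friedrichs-mollifier / Bona--Smith scheme (equivalently Kato's quasilinear semigroup theorem, since $A(U)$ is lower triangular with real diagonal). Applying $\Lambda^{s}$ to the $u$-equation and $\Lambda^{s-1}$ to the $v$-equation, pairing in $L^{2}$, and estimating the transport commutators $[\Lambda^{s},u]\partial_{x}u$, $[\Lambda^{s-1},u]\partial_{x}v$ and the $(1+v)u_{x}$ coupling by the Kato--Ponce commutator and product inequalities (\cite{Kato-Ponce}; cf.\ Lemma \ref{lem:known facts about H^s}(v)--(vi)), one obtains for the mollified problems, uniformly in the mollification parameter,
\[
\frac{d}{dt}\bigl(\|u\|_{H^{s}}^{2}+\|v\|_{H^{s-1}}^{2}\bigr)\le C\bigl(1+\|u_{x}\|_{L^{\infty}}+\|v\|_{L^{\infty}}\bigr)\bigl(\|u\|_{H^{s}}^{2}+\|v\|_{H^{s-1}}^{2}\bigr).
\]
This yields a common existence time $T_{0}\gtrsim 1/\|(u_{0},v_{0})\|_{H^{s}\times H^{s-1}}$; passing to the limit, and running a lower-order estimate on the difference of two solutions for uniqueness and continuous dependence, produces the unique local solution.

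For Step 2 I would re-run that estimate on the exact solution and apply Gronwall: the maximal time $T^{\ast}$ is finite only if $\int_{0}^{T^{\ast}}\bigl(\|u_{x}(\tau)\|_{L^{\infty}}+\|v(\tau)\|_{L^{\infty}}\bigr)d\tau=\infty$. Two \emph{a priori} facts then reduce this to a bound on $u_{x}$ alone. First, let $q(t,\cdot)$ be the flow of $u$ ($q_{t}=u(t,q)$, $q(0,x)=x$); then $q_{x}(t,x)=\exp\!\int_{0}^{t}(u_{x}\!\circ q)(\tau,x)\,d\tau>0$, and the continuity equation integrates to $\rho(t,q(t,x))\,q_{x}(t,x)=\rho_{0}(x)$, so $\inf v_{0}>-1$ forces $\rho(t,\cdot)>0$ for all $t$ and $\|v(t)\|_{L^{\infty}}\le 1+\|\rho_{0}\|_{L^{\infty}}\exp\!\int_{0}^{t}\|u_{x}(\tau)\|_{L^{\infty}}d\tau$; thus $\|v\|_{L^{\infty}}$ is controlled once $u_{x}\in L^{1}_{t}L^{\infty}_{x}$. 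Second, the functional $E(t)=\int_{\mathbb R}(u^{2}+u_{x}^{2}+v^{2})\,dx$ is conserved: testing the first equation of \eqref{eq:2CHr} against $u$ makes the $\alpha$-, cubic- and $\beta(2u_{x}u_{xx}+uu_{xxx})$-terms integrate to zero, and the leftover $\int u(1+v)v_{x}$ cancels against the second equation tested against $v$. Hence $\|u(t)\|_{H^{1}}$, $\|v(t)\|_{L^{2}}$, and in particular $\|u(t)\|_{L^{\infty}}$, stay bounded for all $t$.

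Everything thus hinges on bounding $\int_{0}^{T}\|u_{x}(\tau)\|_{L^{\infty}}d\tau$. Differentiating the first equation of \eqref{eq:2CHr-1} in $x$ and using $\Lambda^{-2}\partial_{x}^{2}=-\mathrm{Id}+\Lambda^{-2}$, one finds along the characteristics a Riccati-type equation $\dot w=-\tfrac{\beta}{2}\,w^{2}+g$ for $w=u_{x}\!\circ q$, where $g$ is built from $u$, $v$ and $\Lambda^{-2}$ of the nonlinearity and is, modulo the conserved energy (which already bounds the $L^{\infty}$ norm of those $\Lambda^{-2}$ terms through their $L^{1}$/$L^{2}$ norms), governed by $\rho\!\circ q=\rho_{0}\,e^{-\int_{0}^{t}w\,ds}$. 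Coupling this with the evolution of $q_{x}$ (equivalently of $\rho\!\circ q$), the positivity $\inf\rho_{0}>0$ keeps $q_{x}$ from reaching $0$, which bounds $\rho$ from above and in turn prevents $w$ from escaping to $-\infty$ in finite time; the restriction $0<\beta<2$ is used precisely in closing this coupled Gronwall inequality. This gives $\int_{0}^{T}\|u_{x}\|_{L^{\infty}}<\infty$ on every interval, hence $T^{\ast}=\infty$ by Step 2, and the Step 1 bootstrap keeps $(u,v)$ in $H^{s}\times H^{s-1}$ on each $[0,T]$, the $\mathcal C^{1}$-regularity following by reading $u_{t},v_{t}$ off \eqref{eq:2CH CP}. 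I expect this last point --- the wave-breaking analysis showing that a non-vanishing density rules out the Riccati blow-up, despite the asymmetric $\beta$-weighted nonlinearity absent from the classical two-component system --- to be by far the most delicate step; Steps 1 and 2 are a by-now-routine adaptation of the Camassa--Holm local theory.
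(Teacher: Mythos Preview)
The paper does not prove this theorem at all: it is quoted verbatim from \cite[Theorems 3.1, 5.1]{ChenLiu} and used as a black box, so there is no ``paper's own proof'' to compare against. Your proposal is therefore not competing with anything in the present paper; it is a sketch of the Chen--Liu argument itself.

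As such a sketch it is broadly faithful. The three-step programme (Kato-type local theory, a blow-up criterion in terms of $\|u_{x}\|_{L^{\infty}}$, and \emph{a priori} control via the conserved $H^{1}\times L^{2}$ energy together with the transported identity $\rho(t,q)\,q_{x}=\rho_{0}$) is exactly the architecture of \cite{ChenLiu}, and your identification of the conserved quantity $E=\int(u^{2}+u_{x}^{2}+v^{2})\,dx$ and of the Lagrangian positivity of $\rho=1+v$ is correct. The one place where your outline is genuinely hand-wavy is Step~3: the sentence ``the restriction $0<\beta<2$ is used precisely in closing this coupled Gronwall inequality'' hides the entire content of \cite[Theorem 5.1]{ChenLiu}. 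There one does not simply Gronwall; one studies a coupled ODE system along characteristics and shows, using $0<\beta<2$ in an essential algebraic way, that a certain combination stays bounded, which is what rules out the Riccati blow-up of $u_{x}$. Your paragraph does not actually carry this out, so as written your Step~3 is a plausibility argument rather than a proof. For the purposes of the present paper this is immaterial, since the result is only being cited.
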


Our main result is the following theorem, the proof of which shall
be given in Propositions \ref{prop:u analytic in x} and \ref{prop:global analyticity}
below.
\begin{thm}
\label{thm:main} Assume $0<\beta<2$. If $u_{0},v_{0}\in A(r_{0})$
for some $r_{0}>0$ and $\inf_{x\in\mathbb{R}}v_{0}(x)>-1$, then
the unique solution $(u,v)$ in Theorem \ref{thm:Chen Liu} to the
Cauchy problem \eqref{eq:2CH CP} belongs to $\oplus^{2}\mathcal{C}^{\omega}([0,\infty)_{t}\times\mathbb{R}_{x})$.
Moreover, there exists a continuous function $\sigma(t)$ such that
we have $u(\cdot,t),v(\cdot,t)\in A(e^{\sigma(t)})$ for $t\in[0,T]$.
(The function $\sigma(t)$ shall be defined in Proposition \ref{prop:u analytic in x}.)
\end{thm}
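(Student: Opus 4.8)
The strategy is the standard two-step scheme of \cite{BHPglobal}, adapted to the asymmetric system. The first step is \emph{spatial analyticity at every fixed time}: by Theorem \ref{thm:Chen Liu} we already have a global Sobolev solution $(u,v)\in\mathcal C([0,\infty),H^s\times H^{s-1})$, so what must be shown is that for each $t$ the functions $u(\cdot,t),v(\cdot,t)$ extend holomorphically to a strip $S(e^{\sigma(t)})$ with $\sigma$ continuous. The natural device is a Gr\"onwall-type differential inequality for the asymmetric Kato--Masuda-style quantity announced in the introduction,
\[
\Phi_m(t)=\frac12\sum_{j=1}^{m+1}\frac{e^{2(j-1)\sigma}}{j!^{2}}\|u^{(j)}(\cdot,t)\|_{0}^{2}+\frac12\sum_{j=0}^{m}\frac{e^{2j\sigma}}{j!^{2}}\|v^{(j)}(\cdot,t)\|_{0}^{2},
\]
where $\sigma=\sigma(t)$ is to be chosen decreasing. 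Differentiating $\Phi_m$ in $t$ using the evolution equations \eqref{eq:2CH CP}, integrating by parts, and invoking the Sobolev product and commutator estimates collected in Lemma \ref{lem:known facts about H^s}, one bounds $\tfrac{d}{dt}\Phi_m$ by $\big(\dot\sigma+C(\|u\|_{s}+\|v\|_{s-1}+1)\big)\Phi_m$ plus terms controlled by the lower-order truncations; the point of the shifted index on the $u$-sum (starting at $j=1$ with weight $e^{2(j-1)\sigma}$) is precisely to make the cross terms coming from $\Lambda^{-2}\partial_x$ acting on $v$ and from $(uv)_x$ match up, which a symmetric sum would fail to do. Choosing $\sigma(t)$ to solve $\dot\sigma=-C(\|u(\cdot,t)\|_{s}+\|v(\cdot,t)\|_{s-1}+1)$ with $e^{\sigma(0)}<r_0$ kills the bad factor, so $\Phi_m(t)\le\Phi_m(0)$ uniformly in $m$; letting $m\to\infty$ and using Lemma \ref{lem:frechet}(iii)/Proposition \ref{prop:families of norms} gives $u(\cdot,t),v(\cdot,t)\in A(e^{\sigma(t)})$. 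Since the Sobolev norms are continuous in $t$ on $[0,\infty)$, $\sigma$ is $C^1$, hence the claimed continuous radius function exists on every $[0,T]$. This is the content I would package as Proposition \ref{prop:u analytic in x}.

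The second step is \emph{joint analyticity in $(t,x)$}, to be stated as Proposition \ref{prop:global analyticity}. Here the mechanism is to combine the spatial-analyticity bound just obtained with the local Cauchy--Kowalevsky result Theorem \ref{thm:CP local2}: fix $t_0\in[0,\infty)$; by Step 1 and Proposition \ref{prop:embed}, $(u(\cdot,t_0),v(\cdot,t_0))\in\oplus^2 E_{\Delta,s+1}$ for some $\Delta>0$ depending on $\sigma(t_0)$, so Theorem \ref{thm:CP local2} furnishes a holomorphic-in-time solution on a disk $D(0,T_\Delta(1-d))$ valued in $\oplus^2 E_{\Delta d,s+1}\subset\oplus^2 A(\Delta d)$; by the uniqueness in Theorem \ref{thm:Chen Liu} this holomorphic solution coincides with $(u,v)$ near $t_0$. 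Thus around each $(t_0,x_0)$ the solution is given by a function holomorphic in $t$ on a complex disk and real-analytic (indeed holomorphic on a strip) in $x$; a Hartogs / separate-analyticity argument, or more elementarily the joint power-series bound that the $E$-norm estimates already encode, yields joint real-analyticity on $[0,\infty)_t\times\mathbb R_x$. Covering $[0,T]$ by finitely many such time-disks and noting $\Delta$ can be taken bounded below on the compact interval (because $\sigma$ is continuous, hence bounded below on $[0,T]$) completes the argument.

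The main obstacle is Step 1, specifically the careful bookkeeping of the energy inequality for the asymmetric functional. One must differentiate $\Phi_m$, and after integration by parts the dangerous contributions are: the transport term $\beta uu_x$ acting on $u^{(j)}$, which after the commutator $[\partial_x^j,u]$ produces top-order pieces that have to be absorbed into $\Phi_m$ itself rather than into the truncation error; the nonlocal term $\Lambda^{-2}\partial_x[\,v+\tfrac12 v^2\,]$ in the $u$-equation, which lowers the effective order and is the reason the $u$-sum carries the weight $e^{2(j-1)\sigma}$ instead of $e^{2j\sigma}$; and the $(uv)_x$ term in the $v$-equation, whose commutator mixes $u$-derivatives into the $v$-sum. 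The estimates needed to control all of these — the combinatorial inequalities of \cite[Lem 2.2]{HM}, the Sobolev bilinear bounds of Lemma \ref{lem:known facts about H^s}(v),(vi), and the smoothing of $\Lambda^{-2}$ in Lemma \ref{lem:known facts about H^s}(iii) — are exactly the inequalities the paper defers to Sections \ref{sec:estimates1} and \ref{sec:estimates2}; assembling them into the single clean bound $\tfrac{d}{dt}\Phi_m\le(\dot\sigma+C_\ast)\Phi_m$ with $C_\ast$ expressed through the (already globally bounded on compacts) Sobolev norms is where all the real work lies. Once that inequality is in hand, Gr\"onwall, the choice of $\sigma$, and the limit $m\to\infty$ are routine.
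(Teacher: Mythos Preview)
Your two-step outline matches the paper, but the differential inequality you claim in Step~1 is not the one that actually emerges. When you differentiate $\Phi_m(\sigma(t),u(t),v(t))$ the chain rule gives $\dot\sigma\,\partial_\sigma\Phi_m+\langle F,D\Phi_m\rangle$, so $\dot\sigma$ multiplies $\partial_\sigma\Phi_m$, not $\Phi_m$; and the commutator sums $\sum_\ell\binom{j}{\ell}u^{(\ell)}u^{(j-\ell+1)}$ (and their analogues for $(uv)_x$) do \emph{not} produce a bound $C_\ast\Phi_m$ but rather terms of the form $e^\sigma\Phi_m^{1/2}\,\partial_\sigma\Phi_m$ and $C\|(u,v)\|_3\,\partial_\sigma\Phi_m$---this is exactly why the discrete Cauchy--Schwarz inequalities in the Appendix yield $A\tilde A^{2}$, $\tilde A B\tilde B$, etc., with $\tilde A^{2}\sim\partial_\sigma\Phi^{(1)}$. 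Consequently your choice $\dot\sigma=-C(\|u\|_s+\|v\|_{s-1}+1)$ does not kill the bad factor: $\dot\sigma$ must also absorb an $L\Phi_m^{1/2}$ piece, which is a~priori unknown. The paper handles this via the abstract Kato--Masuda theorem (Theorem~\ref{thm:KatoMasuda}), which is tailored to estimates of the shape $|\langle F,D\Phi_\sigma\rangle|\le K\Phi_\sigma+L\Phi_\sigma^{1/2}\partial_\sigma\Phi_\sigma+M\partial_\sigma\Phi_\sigma$ and outputs $\sigma(t)=\sigma_0-\tfrac{2L}{K}\Phi_{\sigma_0,\infty}(u_0,v_0)^{1/2}(e^{Kt/2}-1)-Mt$ together with $\Phi_{\sigma(t)}\le\Phi_{\sigma_0}(U_0)e^{Kt}$; your simplified Gr\"onwall route would need to reproduce this machinery. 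A secondary point: the paper builds $\Phi_m$ with $H^2$-norms rather than $L^2$-norms, because the bilinear estimate $\|fg\|_2\le 8(\|f\|_2\|g\|_1+\|f\|_1\|g\|_2)$ is what makes the triple sums close without losing a derivative.

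For Step~2 your plan is essentially right, but the paper does not invoke Hartogs. It first runs a continuation argument (set $T^\ast=\sup\{T:(u,v)\in\mathcal C^\omega([0,T],\oplus^2 A(\delta_T))\}$, use Step~1 plus Theorem~\ref{thm:CP local2} at $T^\ast$ to derive a contradiction), obtaining $t\mapsto(u,v)\in\mathcal C^\omega$ with values in $A(\delta_T)$; then it converts separate analyticity into joint analyticity concretely, via Cauchy's integral formula in $t$ to bound $\|\partial_x^j\partial_t^k u\|_{L^2(\mathbb R\times[0,T])}\le\sqrt{T}C^{j+k+1}(j+k)!$ and the Komatsu/Kotake--Narasimhan criterion $\|\Delta^\ell u\|_{L^2}\le C'^{2\ell+1}(2\ell)!$ for real-analyticity. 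Your ``joint power-series bound that the $E$-norm estimates already encode'' is the same idea, but you should make the Cauchy-estimate step explicit rather than appealing to a Hartogs-type principle that does not directly apply to mixed real/complex variables.
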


\subsection{Kato-Masuda theory}

In \cite{KatoMasuda}, the authors used their theory of Liapnov families
to prove a regularity result about the KdV and other equations. Later,
it was applied to a generalized Camassa-Holm equation in \cite{BHPglobal}.
Here we recall the abstract theorem in \cite{KatoMasuda} in a weaker,
more concrete form. In applications, making a suitable choice of the
subset $\mathcal{O}$ of $Z$ is essential.
\begin{thm}
\label{thm:KatoMasuda}Let $X$ and $Z$ be Banach spaces. Assume
that $Z$ is a dense subspace of $X$. Let $\mathcal{O}$ be an open
subset of $Z$ and $F$ be a continuous mapping from $Z$ to $X$.
Let $\left\{ \Phi_{s};-\infty<s<\bar{s}\le\infty\right\} $ be a family
of non-negative real-valued functions on $Z$ satisfying the conditions
(a), (b) and (c) below.

(a) The Fr\'echet partial derivative of $\Phi_{s}(U)$ in $U\in Z$
exists not only in $\mathcal{L}(Z;\mathbb{R})$ but also in $\mathcal{L}(X;\mathbb{R})$.
It is denoted by $D\Phi_{s}(U)$. This statement makes sense because
$\mathcal{L}(X;\mathbb{R})\subset\mathcal{L}(Z;\mathbb{R})$ by the
canonical identification. {[}(a) follows from (b) below.{]}

(b) The Fr\'echet derivative of $\Phi_{s}(U)$ in $(s,U)$ exists
not only in $\mathcal{L}(\mathbb{R}\times Z;\mathbb{R})$ but also
in $\mathcal{L}(\mathbb{R}\times X;\mathbb{R})$ and is continuous
from $\mathbb{R}\times Z$ to $\mathcal{L}(\mathbb{R}\times X;\mathbb{R})$.
This statement makes sense because $\mathcal{L}(\mathbb{R}\times X;\mathbb{R})\subset\mathcal{L}(\mathbb{R}\times Z;\mathbb{R})$
by the canonical identification.

(c) There exist positive constants $K,L$ and $M$ such that 
\begin{align}
\left|\left\langle F(U),D\Phi_{s}(U)\right\rangle \right| & \le K\Phi_{s}(U)+L\Phi_{s}(U)^{1/2}\partial_{s}\Phi_{s}(U)+M\partial_{s}\Phi_{s}(U)\label{eq:liapnov F}
\end{align}
holds for any $U\in\mathcal{O}.$ Here $\langle\cdot,\cdot\rangle$
(no subscript) is the pairing of $X$ and $\mathcal{L}(X;\mathbb{R})$.

Let $U\in\mathcal{C}([0,T];\,\mathcal{O})\cap\mathcal{C}^{1}([0,T];\,X)$
be the solution to the Cauchy problem
\begin{equation}
\begin{cases}
\dfrac{dU}{dt}=F(U),\\
U(0,x)=U_{0}(x).
\end{cases}\label{eq:KMCP}
\end{equation}
For a fixed constant $s_{0}<\bar{s}$, set 
\begin{align*}
r(t) & =\Phi_{s_{0}}(U_{0})e^{Kt},\\
s(t) & =s_{0}-\int_{0}^{t}\left(Lr(\tau)^{1/2}+M\right)\,d\tau=s_{0}-\frac{2L\Phi_{s_{0}}(U_{0})^{1/2}}{K}(e^{Kt/2}-1)-Mt
\end{align*}
for $t\in[0,T]$. Then we have 
\[
\Phi_{s(t)}\left(U(t)\right)\le r(t),\;t\in[0,T].
\]
\end{thm}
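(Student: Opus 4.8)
The plan is to control the real-valued function $\phi(t):=\Phi_{s(t)}\bigl(U(t)\bigr)$ along the given solution $U\in\mathcal{C}([0,T];\mathcal{O})\cap\mathcal{C}^{1}([0,T];X)$. The first step is to show that $\phi$ is $\mathcal{C}^{1}$ on $[0,T]$ with
\[
\phi'(t)=s'(t)\,\partial_{s}\Phi_{s(t)}\bigl(U(t)\bigr)+\bigl\langle F(U(t)),\,D\Phi_{s(t)}(U(t))\bigr\rangle ,
\]
the second term being the pairing of $X$ with $\mathcal{L}(X;\mathbb{R})$. Here $t\mapsto(s(t),U(t))$ is $\mathcal{C}^{1}$ into $\mathbb{R}\times X$ (the curve $U$ is $\mathcal{C}^{1}$ into $X$ by hypothesis, and $s$ is $\mathcal{C}^{1}$ with derivative the continuous function $-(Lr(t)^{1/2}+M)$, since $r(\tau)=\Phi_{s_{0}}(U_{0})e^{K\tau}$ is smooth and nonnegative), its range lies in $\mathbb{R}\times\mathcal{O}$, and conditions (a) and (b) are exactly what allow $\Phi$ to be differentiated along it even though $U$ moves only $\mathcal{C}^{1}$-ly in the larger space $X$; finally $U'(t)=F(U(t))$ is inserted. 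I regard this as the delicate step and would carry it out following \cite{KatoMasuda}.

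Next I would apply the Liapunov inequality (c), legitimate since $U(t)\in\mathcal{O}$, after bounding the pairing by its absolute value, and substitute $s'(t)=-\bigl(Lr(t)^{1/2}+M\bigr)$. Abbreviating $\pi(t):=\partial_{s}\Phi_{s(t)}(U(t))$ and using $\pi(t)\ge 0$ (the family $\{\Phi_{s}\}$ is non-decreasing in $s$, which is manifest for the functionals used in the rest of the paper), the two $M\pi$ contributions cancel and what survives is
\[
\phi'(t)\le K\phi(t)+L\bigl(\phi(t)^{1/2}-r(t)^{1/2}\bigr)\pi(t).
\]
Consequently, at every $t$ with $\phi(t)\le r(t)$ one has $\phi'(t)\le K\phi(t)$. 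Since also $\phi(0)=\Phi_{s_{0}}(U_{0})=r(0)$, one would like to conclude $\phi\le r$ by Gronwall, but this inequality is only conditional — it holds on the set $\{\phi\le r\}$ — so a perturbation is needed to keep $\phi$ from crossing $r$.

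The third step is a barrier argument on a perturbed pair. Fix $\varepsilon>0$ and set $r_{\varepsilon}(t)=(\Phi_{s_{0}}(U_{0})+\varepsilon)e^{Kt}$, $s_{\varepsilon}(t)=s_{0}-\int_{0}^{t}\bigl(Lr_{\varepsilon}(\tau)^{1/2}+M\bigr)\,d\tau$; then $s_{\varepsilon}$ is $\mathcal{C}^{1}$ with $s_{\varepsilon}(t)\le s_{0}<\bar{s}$, the function $\phi_{\varepsilon}(t):=\Phi_{s_{\varepsilon}(t)}(U(t))$ is $\mathcal{C}^{1}$, and $\phi_{\varepsilon}(0)=\Phi_{s_{0}}(U_{0})<r_{\varepsilon}(0)$. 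The computation of the previous paragraph, run verbatim with $(r_{\varepsilon},s_{\varepsilon})$ in place of $(r,s)$, gives $\phi_{\varepsilon}'(t)\le K\phi_{\varepsilon}(t)$ whenever $\phi_{\varepsilon}(t)\le r_{\varepsilon}(t)$. If $\phi_{\varepsilon}$ ever met $r_{\varepsilon}$, let $\tau=\inf\{t\in[0,T]:\phi_{\varepsilon}(t)\ge r_{\varepsilon}(t)\}$; by continuity and the strict inequality at $0$ we get $\tau>0$, $\phi_{\varepsilon}(\tau)=r_{\varepsilon}(\tau)$, and $\phi_{\varepsilon}\le r_{\varepsilon}$ on $[0,\tau]$, so $e^{-Kt}\phi_{\varepsilon}(t)$ is non-increasing there and $e^{-K\tau}\phi_{\varepsilon}(\tau)\le\phi_{\varepsilon}(0)=\Phi_{s_{0}}(U_{0})<\Phi_{s_{0}}(U_{0})+\varepsilon=e^{-K\tau}r_{\varepsilon}(\tau)$, a contradiction. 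Hence $\phi_{\varepsilon}(t)<r_{\varepsilon}(t)$ on all of $[0,T]$. Letting $\varepsilon\downarrow 0$ we have $s_{\varepsilon}(t)\to s(t)$, hence $\phi_{\varepsilon}(t)\to\Phi_{s(t)}(U(t))=\phi(t)$ by the continuity of $\Phi$ in $(s,U)$ granted by (b), while $r_{\varepsilon}(t)\to r(t)$; therefore $\Phi_{s(t)}(U(t))\le r(t)$ for all $t\in[0,T]$, which is the assertion.

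The main obstacle is the differentiation step: since $U(t)$ lies in $Z$ but is only $\mathcal{C}^{1}$ as a curve in the strictly larger space $X$, the naive chain rule does not apply, and one must exploit the precise form of (a) and (b) — the extension of $D\Phi_{s}(U)$ from $\mathcal{L}(Z;\mathbb{R})$ to $\mathcal{L}(X;\mathbb{R})$ and its continuity in $U\in Z$ — as in \cite{KatoMasuda}. Everything else is elementary: the cancellation producing the conditional inequality, the sign $\partial_{s}\Phi_{s}\ge 0$ that renders the $L(\phi^{1/2}-r^{1/2})\pi$ term harmless on $\{\phi\le r\}$, and the routine $\varepsilon$-barrier argument that upgrades the conditional inequality to the stated bound.
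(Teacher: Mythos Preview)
The paper does not prove this theorem at all: it is quoted from \cite{KatoMasuda} (``Here we recall the abstract theorem in \cite{KatoMasuda} in a weaker, more concrete form''), so there is nothing in the paper to compare against beyond the citation itself.

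Your argument is the standard one from \cite{KatoMasuda} and is correct in outline. A few remarks. First, you rightly flag the chain-rule step as the only genuinely delicate point; conditions (a) and (b) are set up precisely so that the composite $t\mapsto\Phi_{s(t)}(U(t))$ is $\mathcal{C}^{1}$ even though $U$ is only $\mathcal{C}^{1}$ into $X$, and the argument in \cite{KatoMasuda} for this goes through verbatim. Second, the hypothesis $\partial_{s}\Phi_{s}\ge 0$ that you invoke is not listed among (a)--(c) as stated here, but it is part of the notion of a Liapunov family in \cite{KatoMasuda}, and without it the right-hand side of \eqref{eq:liapnov F} would not even be guaranteed nonnegative; you are right that for the concrete $\Phi_{\sigma,m}$ used later in the paper this monotonicity is manifest. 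Third, the $\varepsilon$-barrier step is clean and is exactly how Kato--Masuda close the argument; your observation that the naive Gronwall inequality is only conditional (valid on $\{\phi\le r\}$) and therefore needs the perturbation is the right diagnosis.
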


Later we will use this theorem when $X=\oplus^{2}H^{m+2},\,Z=\oplus^{2}H^{m+4}$
and $\Phi_{s}$ is related to the Sobolev norms. Roughly speaking,
Theorem \ref{thm:KatoMasuda} means that the regularity of $U(t)$
for $t\in[0,T]$ follows from that of $U_{0}$.

\subsection{Pairing and the main estimate}

For $\sigma\in\mathbb{R}$, $(u,v)\in\oplus^{2}H^{\infty}$ and a
positive integer $m$, set

\begin{align*}
\Phi_{\sigma,m}(u,v) & =\Phi_{\sigma,m}^{(1)}(u)+\Phi_{\sigma,m}^{(2)}(v),\\
\Phi_{\sigma,m}^{(1)}(u) & =\frac{1}{2}\sum_{j=1}^{m+1}\frac{e^{2(j-1)\sigma}}{j!^{2}}\|u^{(j)}\|_{2}^{2}\\
 & =\frac{1}{2}\left(\|u^{(1)}\|_{2}^{2}+\frac{e^{2\sigma}}{2!^{2}}\|u^{(2)}\|_{2}^{2}+\cdots+\frac{e^{2m\sigma}}{(m+1)!^{2}}\|u^{(m+1)}\|_{2}^{2}\right),\\
\Phi_{\sigma,m}^{(2)}(v) & =\frac{1}{2}\sum_{j=0}^{m}\frac{e^{2j\sigma}}{j!^{2}}\|v^{(j)}\|_{2}^{2}\\
 & =\frac{1}{2}\left(\|v\|_{2}^{2}+\frac{e^{2\sigma}}{1!^{2}}\|v^{(1)}\|_{2}^{2}+\cdots+\frac{e^{2m\sigma}}{m!^{2}}\|v^{(m)}\|_{2}^{2}\right).
\end{align*}
The asymmetry of the exponents and the ranges of $j$ corresponds
to that of \eqref{eq:2CHr-1}: its second equation involves $u_{x}$,
while the first does not involve $v_{x}.$ We have 
\[
\|u\|_{2}^{2}+2\lim_{m\to\infty}e^{2\sigma}\Phi_{\sigma,m}^{(1)}(u)=\|u\|_{\sigma,2}^{2},\;\lim_{m\to\infty}2\Phi_{\sigma,m}^{(2)}(v)=\|v\|_{\sigma,2}^{2}.
\]
It is trivial that $\lim_{m\to\infty}\Phi_{\sigma,m}(u,v)<\infty$
if and only if $\|u\|_{\sigma,2}<\infty,\|v\|_{\sigma,2}<\infty$.
Later we will use
\[
\partial_{\sigma}\Phi_{\sigma,m}(u,v)=\sum_{j=2}^{m+1}\frac{(j-1)e^{2(j-1)\sigma}}{j!^{2}}\|u^{(j)}\|_{2}^{2}+\sum_{j=1}^{m}\frac{je^{2j\sigma}}{j!^{2}}\|v^{(j)}\|_{2}^{2}.
\]

\begin{prop}
\label{prop:frechet}Set $F(u,v)=(F_{1}(u,v),F_{2}(u,v))$, where
$F_{1}$ and $F_{2}$ are as in \eqref{eq:F1} and \eqref{eq:F2}.
Then $F$ is a continuous mapping from $\oplus^{2}H^{m+4}$ to $\oplus^{2}H^{m+2}$
and we have
\begin{align}
 & \langle F(u,v),D\Phi_{\sigma,m}(u,v)\rangle\label{eq:frechet}\\
 & =\sum_{j=1}^{m+1}\frac{e^{2(j-1)\sigma}}{j!^{2}}\langle u^{(j)},\partial_{x}^{j}F_{1}(u,v)\rangle_{2}+\sum_{j=0}^{m}\frac{e^{2j\sigma}}{j!^{2}}\langle v^{(j)},\partial_{x}^{j}F_{2}(u,v)\rangle_{2},\nonumber 
\end{align}
where $D\Phi_{\sigma,m}$ is the Fr\'echet derivative and the bracket
on the left-hand side is the pairing of $\oplus^{2}H^{m+2}$ and its
dual $(\oplus^{2}H^{m+2})^{*}\simeq\oplus^{2}H^{m+2}$.
\end{prop}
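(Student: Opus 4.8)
The plan is to prove the two assertions in turn: the mapping property of $F$, and then the pairing identity, which reduces to differentiating the quadratic functional $\Phi_{\sigma,m}$ and checking that the resulting linear functional is bounded on the larger space $\oplus^{2}H^{m+2}$.

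First I would check that $F=(F_{1},F_{2})$ maps $\oplus^{2}H^{m+4}$ continuously into $\oplus^{2}H^{m+2}$ by going through the finitely many terms of \eqref{eq:F1} and \eqref{eq:F2}. For $u,v\in H^{m+4}$, the multiplication estimates Lemma \ref{lem:known facts about H^s}(v),(vi) give $uu_{x}$, $u^{2}$, $u_{x}^{2}$, $uv$, $v^{2}\in H^{m+3}$, while Lemma \ref{lem:known facts about H^s}(iii),(iv) show that $\Lambda^{-2}\partial_{x}$ is bounded from $H^{s}$ into $H^{s}$ (in fact into $H^{s+1}$); hence every summand of $F_{1}$ and every summand of $F_{2}$ lies in $H^{m+3}\subset H^{m+2}$. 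Continuity — indeed real-analyticity — is then automatic, since each term is a bounded linear or bilinear expression in $(u,v)$ and its first derivative.

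Next I would compute the Fréchet derivative of $\Phi_{\sigma,m}$. Writing $\Phi_{\sigma,m}^{(1)}(u)=\tfrac12\sum_{j=1}^{m+1}\tfrac{e^{2(j-1)\sigma}}{j!^{2}}\langle u^{(j)},u^{(j)}\rangle_{2}$ and similarly for $\Phi_{\sigma,m}^{(2)}(v)$, the Gâteaux derivative in a direction $(h,k)$ is $\sum_{j=1}^{m+1}\tfrac{e^{2(j-1)\sigma}}{j!^{2}}\langle u^{(j)},h^{(j)}\rangle_{2}+\sum_{j=0}^{m}\tfrac{e^{2j\sigma}}{j!^{2}}\langle v^{(j)},k^{(j)}\rangle_{2}$. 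The crucial point — and the only delicate step — is that, for $(u,v)\in\oplus^{2}H^{m+4}$, this functional extends to a bounded functional on the \emph{larger} space $\oplus^{2}H^{m+2}$. Integrating by parts and using $\langle f,g\rangle_{2}=\langle\Lambda^{2}f,\Lambda^{2}g\rangle_{0}$, one finds $\langle u^{(j)},h^{(j)}\rangle_{2}=(-1)^{j}\langle\Lambda^{4}\partial_{x}^{2j}u,h\rangle_{0}$; since $u\in H^{m+4}$ and the operator $\Lambda^{4}\partial_{x}^{2j}$ has order $4+2j\le 2m+6$, we get $\Lambda^{4}\partial_{x}^{2j}u\in H^{m-2j}\subset H^{-(m+2)}$ for every $j\le m+1$, so the pairing against $h\in H^{m+2}$ makes sense, and for $j=m+1$ this regularity count is exactly tight. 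This is precisely why the domain $Z=\oplus^{2}H^{m+4}$ is the right choice, and why one cannot get away with $H^{m+3}$. Consequently the Gâteaux derivative coincides with the Fréchet derivative $D\Phi_{\sigma,m}(u,v)\in(\oplus^{2}H^{m+2})^{*}$, and inserting $(h,k)=F(u,v)$, which is admissible by the first step, gives exactly \eqref{eq:frechet}.

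The main obstacle I expect is the bookkeeping in this integration-by-parts and duality argument: keeping track of the signs from moving $\partial_{x}^{j}$, of the factors coming from $\Lambda^{2}$, and above all verifying that the Sobolev count $H^{m+4}\to H^{-(m+2)}$ is exactly balanced for the top-order term $j=m+1$, so that $D\Phi_{\sigma,m}(u,v)$ genuinely extends to a bounded functional on $\oplus^{2}H^{m+2}$. No boundary terms appear since all functions lie in $L^{2}$-based Sobolev spaces over $\mathbb{R}$. Everything else — the mapping property of $F$ and the final substitution — is routine.
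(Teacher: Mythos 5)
Your proposal is correct and follows essentially the same route as the paper: both compute the derivative of the quadratic functionals $\Psi_j(u)=\tfrac12\|u^{(j)}\|_2^2$ term by term in the direction $(w_1,w_2)$ and sum with the weights $e^{2(j-1)\sigma}/j!^2$, $e^{2j\sigma}/j!^2$. Your write-up is in fact more complete than the paper's, which simply states ``the proposition follows immediately'' and omits both the verification that $F$ maps $\oplus^2H^{m+4}$ into $\oplus^2H^{m+2}$ and the duality count $\langle u^{(j)},h^{(j)}\rangle_2=(-1)^j\langle\Lambda^4\partial_x^{2j}u,h\rangle_0$ with $\Lambda^4\partial_x^{2j}u\in H^{m-2j}\subset H^{-(m+2)}$ for $j\le m+1$, which you correctly identify as the one delicate point justifying that $D\Phi_{\sigma,m}(u,v)$ lies in $\mathcal{L}(\oplus^2H^{m+2};\mathbb{R})$.
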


\begin{proof}
Set
\begin{align*}
\Psi_{j}^{(1)}(u,v) & =\Psi_{j}(u)=\frac{1}{2}\|u^{(j)}\|_{2}^{2}=\frac{1}{2}\|\Lambda^{2}u^{(j)}\|_{0}^{2},\\
\Psi_{j}^{(2)}(u,v) & =\Psi_{j}(v)=\frac{1}{2}\|v^{(j)}\|_{2}^{2}=\frac{1}{2}\|\Lambda^{2}v^{(j)}\|_{0}^{2}.
\end{align*}
Then 
\[
\Phi_{\sigma,m}(u,v)=\sum_{j=1}^{m+1}\frac{e^{2(j-1)\sigma}}{j!^{2}}\Psi_{j}^{(1)}(u,v)+\sum_{j=0}^{m}\frac{e^{2j\sigma}}{j!^{2}}\Psi_{j}^{(2)}(u,v).
\]
We have
\begin{align*}
\langle(w_{1},w_{2}),D\Psi_{j}^{(1)}(u,v)\rangle & =D\Psi_{j}^{(1)}(u,v)\left((w_{1},w_{2})\right)=\left.\frac{d}{d\tau}\Psi_{j}^{(1)}\left((u,v)+\tau(w_{1},w_{2})\right)\right|_{\tau=0}\\
 & =\left.\frac{d}{d\tau}\Psi_{j}\left(u+\tau w_{1}\right)\right|_{\tau=0}=\langle u^{(j)},w_{1}^{(j)}\rangle_{2}
\end{align*}
and similarly
\[
\langle(w_{1},w_{2}),D\Psi_{j}^{(2)}(u,v)\rangle=\langle v^{(j)},w_{2}^{(j)}\rangle_{2}.
\]
The proposition follows immediately.
\end{proof}
\begin{prop}
\label{prop:main estimate}There exist positive constants $K_{1},K_{2},L_{1},L_{2},M_{1},M_{2},M_{3}$
independent of $u,v$ and $\sigma$ such that we have
\begin{align}
\left|\langle F(u,v),D\Phi_{\sigma,m}(u,v)\rangle\right|\le & \left[K_{1}+K_{2}\|(u,v)\|_{3}\right]\Phi_{\sigma,m}(u,v)\label{eq:liapnov F_mu}\\
 & +\left(L_{1}+L_{2}e^{\sigma}\right)\Phi_{\sigma,m}(u,v)^{1/2}\partial_{\sigma}\Phi_{\sigma,m}(u,v)\nonumber \\
 & +\left[M_{1}+(M_{2}+M_{3}e^{2\sigma})\|(u,v)\|_{3}\right]\partial_{\sigma}\Phi_{\sigma,m}(u,v).\nonumber 
\end{align}
for $(u,v)\in\oplus^{2}H^{m+4}$, where $D\Phi_{\sigma,m}$ is the
Fr\'echet derivative of $\Phi_{\sigma,m}$.
\end{prop}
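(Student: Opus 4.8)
The plan is to start from the identity \eqref{eq:frechet} of Proposition \ref{prop:frechet}, which writes $\langle F(u,v),D\Phi_{\sigma,m}(u,v)\rangle$ as a sum over $j$ of the $H^{2}$-pairings $\langle u^{(j)},\partial_{x}^{j}F_{1}(u,v)\rangle_{2}$, weighted by $e^{2(j-1)\sigma}/j!^{2}$ for $1\le j\le m+1$, and $\langle v^{(j)},\partial_{x}^{j}F_{2}(u,v)\rangle_{2}$, weighted by $e^{2j\sigma}/j!^{2}$ for $0\le j\le m$. Substituting \eqref{eq:F1} and \eqref{eq:F2} and applying the Leibniz rule inside each $\partial_{x}^{j}$, one is reduced to estimating a fixed finite list of elementary weighted sums, which it is convenient to group into three types: (A) the transport pairings coming from $-\beta uu_{x}$ and $-(uv)_{x}$; (B) the smoothed pairings in which $\Lambda^{-2}\partial_{x}^{j+1}$ acts on one of $u,\ v,\ u^{2},\ u_{x}^{2},\ v^{2}$; and (C) the single unsmoothed coupling $\langle v^{(j)},u^{(j+1)}\rangle_{2}$ coming from $-u_{x}$ in $F_{2}$. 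The aim is to bound the contribution of each type by the three quantities $\Phi_{\sigma,m}$, $\Phi_{\sigma,m}^{1/2}\partial_{\sigma}\Phi_{\sigma,m}$ and $\partial_{\sigma}\Phi_{\sigma,m}$ occurring on the right of \eqref{eq:liapnov F_mu}, with the coefficients $\|(u,v)\|_{3}$, $e^{\sigma}$ and $e^{2\sigma}$ as indicated. The (rather lengthy) verification of the individual sums is the content of Sections \ref{sec:estimates1} and \ref{sec:estimates2}; here I only describe how they combine.

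For type (A) I would commute the undifferentiated factor through $\partial_{x}^{j}$ and integrate by parts once in $x$ in each of the three $L^{2}$ slots of the $H^{2}$ inner product, using $\|w\|_{2}^{2}=\|w\|_{0}^{2}+2\|w'\|_{0}^{2}+\|w''\|_{0}^{2}$: the diagonal part then becomes $\lesssim\|u_{x}\|_{1}\|u^{(j)}\|_{2}^{2}$, which by Lemma \ref{lem:known facts about H^s}(ii) is $\lesssim\|(u,v)\|_{3}\|u^{(j)}\|_{2}^{2}$ and, after summation over $j$, feeds into the $K_{2}\|(u,v)\|_{3}\Phi_{\sigma,m}$ term; the remaining commutator terms $\binom{j}{\ell}\langle u^{(j)},u^{(\ell)}u^{(j+1-\ell)}\rangle_{2}$ with $1\le\ell\le j$ are estimated by the product inequalities of Lemma \ref{lem:known facts about H^s}(v),(vi) and the $\ell$- and $j$-summations performed with the convolution-type inequalities of \cite{HM}, contributing the $\Phi_{\sigma,m}^{1/2}\partial_{\sigma}\Phi_{\sigma,m}$ term (one factor of $\partial_{\sigma}\Phi_{\sigma,m}$ paying for the single extra $x$-derivative in the intermediate range of $\ell$) together with further $K_{2}\|(u,v)\|_{3}\Phi_{\sigma,m}$ pieces. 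For type (B) I would first use $\|\Lambda^{-2}\partial_{x}^{p}w\|_{s}\le\|w\|_{s}$ $(p=0,1,2)$ to trade the two-fold smoothing of $\Lambda^{-2}$ against two of the $j+1$ derivatives and then argue as in (A); since the terms with $\Lambda^{-2}\partial_{x}^{j+1}u$ and $\Lambda^{-2}\partial_{x}^{j+1}v$ are merely linear, the corresponding sums are bilinear and land in the $K_{1}\Phi_{\sigma,m}$ and $M_{1}\partial_{\sigma}\Phi_{\sigma,m}$ terms, the smoothing being exactly what prevents a stray $e^{\sigma}$ there, whereas $\Lambda^{-2}\partial_{x}^{j+1}(u^{2})$, $\Lambda^{-2}\partial_{x}^{j+1}(u_{x}^{2})$ and $\Lambda^{-2}\partial_{x}^{j+1}(v^{2})$ produce the $(M_{2}+M_{3}e^{2\sigma})\|(u,v)\|_{3}\partial_{\sigma}\Phi_{\sigma,m}$ and $K_{2}\|(u,v)\|_{3}\Phi_{\sigma,m}$ contributions, the power $e^{2\sigma}$ coming from the mismatch between the weight $e^{2(j-1)\sigma}/j!^{2}$ carried by $\Phi^{(1)}_{\sigma,m}$ and the shift of derivative index produced by these nonlinearities.

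The main obstacle is type (C), the unsmoothed coupling $-\sum_{j=0}^{m}\frac{e^{2j\sigma}}{j!^{2}}\langle v^{(j)},u^{(j+1)}\rangle_{2}$; this is where the \emph{asymmetry} of the Chen--Liu system — the second equation of \eqref{eq:2CHr-1} contains $u_{x}$ while the first contains no $v_{x}$ — makes the estimate genuinely delicate. A naive Cauchy--Schwarz in $j$ is fatal, since it leaves $\|u^{(j+1)}\|_{2}$ weighted by $e^{2j\sigma}/j!^{2}$, which exceeds the corresponding term of $\partial_{\sigma}\Phi_{\sigma,m}$ by a factor of order $j$. One is forced to integrate by parts in $x$ inside the $H^{2}$ pairing — expanding $\Lambda^{2}=1-\partial_{x}^{2}$ and redistributing derivatives between the two factors — so as to rebalance the orders while staying within the range of derivatives that $\Phi_{\sigma,m}$ actually controls (note that $\|v^{(m)}\|_{2}^{2}$ already dominates $\|v^{(m+1)}\|_{0}^{2}$ and $\|v^{(m+2)}\|_{0}^{2}$, and similarly for $u$ at the top index $m+1$), and to exploit the partial cancellation between this term and the smoothed $-\Lambda^{-2}\partial_{x}v$ term of $F_{1}$ — the same cancellation that makes the linearization of \eqref{eq:2CHr-1} conservative once $u$ is measured one Sobolev level above $v$. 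It is precisely the asymmetric choice of exponents ($e^{2(j-1)\sigma}$ in $\Phi^{(1)}_{\sigma,m}$ against $e^{2j\sigma}$ in $\Phi^{(2)}_{\sigma,m}$) and of the ranges of $j$ that makes this cancellation usable, and it is in carrying it out that the extra factor $e^{\sigma}$ multiplying $L_{2}$ appears. Once every elementary sum has been estimated in Sections \ref{sec:estimates1}--\ref{sec:estimates2}, adding them up and collecting terms of the same type yields \eqref{eq:liapnov F_mu}; the resulting constants $K_{1},K_{2},L_{1},L_{2},M_{1},M_{2},M_{3}$ depend only on $\alpha$, $\beta$ and the fixed constants $c_{s},d(s)$ of Lemma \ref{lem:known facts about H^s}, hence are independent of $u$, $v$ and $\sigma$.
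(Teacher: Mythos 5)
Your overall skeleton --- reduce via the identity \eqref{eq:frechet} to a finite list of elementary weighted sums, estimate each, and collect terms --- is exactly the structure of the paper's proof, which simply cites the eight estimates \eqref{eq:subsection1conclusion}--\eqref{eq:subsection8conclusion}. Your descriptions of types (A) and (B) are broadly consistent with what is actually done. The problem is your treatment of the one step you single out as the crux, the unsmoothed coupling $\sum_{j}\frac{e^{2j\sigma}}{j!^{2}}\langle v^{(j)},u^{(j+1)}\rangle_{2}$ (your type (C)).

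You correctly observe that a symmetric Cauchy--Schwarz loses a factor of order $j$, but you then assert that one is ``forced'' to integrate by parts and to exploit a cancellation with the $-\Lambda^{-2}\partial_{x}v$ term of $F_{1}$, and that this is where the factor $e^{\sigma}$ in $L_{2}$ originates. None of this is carried out, and none of it is needed. The estimate closes by the elementary weighted Young inequality
\begin{equation*}
2\,\|u^{(j+1)}\|_{2}\,\|v^{(j)}\|_{2}\le\frac{1}{j+1}\|u^{(j+1)}\|_{2}^{2}+(j+1)\|v^{(j)}\|_{2}^{2},
\end{equation*}
after which $\frac{e^{2j\sigma}}{j!^{2}}\cdot\frac{1}{j+1}\|u^{(j+1)}\|_{2}^{2}=\bigl(1+(k-1)\bigr)\frac{e^{2(k-1)\sigma}}{k!^{2}}\|u^{(k)}\|_{2}^{2}$ with $k=j+1$, which is precisely a term of $2\Phi_{\sigma,m}^{(1)}+\partial_{\sigma}\Phi_{\sigma,m}^{(1)}$ thanks to the shifted index range and shifted exponent built into $\Phi_{\sigma,m}^{(1)}$; the $v$-part $(j+1)\|v^{(j)}\|_{2}^{2}$ is likewise absorbed by $2\Phi_{\sigma,m}^{(2)}+\partial_{\sigma}\Phi_{\sigma,m}^{(2)}$. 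The asymmetry of $\Phi_{\sigma,m}$ is the whole fix; no cancellation between $F_{1}$ and $F_{2}$ enters, and this term contributes only to $K_{1}$ and $M_{1}$. Relatedly, you misattribute the exponential factors: the $e^{\sigma}$ multiplying $L_{2}$ comes from the trilinear convolution sums (Estimates 1, 3, 4, 6, 8 via Proposition \ref{prop:A and tilde A}), and the $e^{2\sigma}$ multiplying $M_{3}$ comes from the $(uv)_{x}$ term of $F_{2}$ (Estimate 8), not from the smoothed quadratic terms of $F_{1}$. Since the only step you discuss in analytic detail is described incorrectly and the rest is deferred, the proposal as written does not establish the proposition; you should replace the cancellation heuristic for type (C) by the weighted Young argument above.
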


\begin{proof}
The estimate follows from \eqref{eq:frechet} and the estimates \eqref{eq:subsection1conclusion},
\eqref{eq:subsection2conclusion}, \eqref{eq:subsection3conclusion},
\eqref{eq:subsection4conclusion}, \eqref{eq:subsection5conclusion},
\eqref{eq:subsection6conclusion}, \eqref{eq:subsection7conclusion},
\eqref{eq:subsection8conclusion} below.
\end{proof}

\subsection{Analyticity in the space variable\label{subsec:Analyticity x}}

In this subsection, we prove a part of Theorem \ref{thm:main}. We
assume that $(u_{0},v_{0})\in\oplus^{2}A(r_{0})$ is as in Theorem
\ref{thm:main}. Then we can apply Theorem \ref{thm:Chen Liu} with
arbitrarily large $s$. We will prove the analyticity of $u(t)$ and
$v(t)$ in $x$ for each fixed $t$.
\begin{prop}
\label{prop:u analytic in x}Let $(u_{0},v_{0})$ and $(u,v)$ be
as in Theorem \ref{thm:main}. Fix $\sigma_{0}<\log r_{0}$ and $T>0$.
Then there exists positive constants $K,L,M$ such that if we set
\begin{align*}
 & \Phi_{\sigma_{0},\infty}(u_{0},v_{0})=\lim_{m\to\infty}\Phi_{\sigma_{0},m}(u_{0},v_{0}),\\
 & \sigma(t)=\sigma_{0}-\frac{2L}{K}\Phi_{\sigma_{0},\infty}(u_{0},v_{0})^{1/2}(e^{Kt/2}-1)-Mt,
\end{align*}
then we have $\left(u(t),v(t)\right)\in\oplus^{2}A(e^{\sigma(t)})\subset\oplus^{2}A(e^{\sigma(T)})$
for $t\in[0,T]$.
\end{prop}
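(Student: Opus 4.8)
**The plan is to apply the Kato--Masuda theorem (Theorem \ref{thm:KatoMasuda}) with the Liapunov family $\{\Phi_{\sigma,m}\}$, passing to the limit $m\to\infty$ at the end.** First I would fix $m$ and set $X=\oplus^2 H^{m+2}$, $Z=\oplus^2 H^{m+4}$, so that $F=(F_1,F_2)$ is continuous from $Z$ to $X$ by Proposition \ref{prop:frechet}. The subset $\mathcal O$ should be chosen so that the solution $(u(t),v(t))$ stays inside it on $[0,T]$; since Theorem \ref{thm:Chen Liu} gives a global Sobolev solution with $\|(u(t),v(t))\|_3$ bounded on $[0,T]$ (by continuity on the compact interval), a natural choice is $\mathcal O=\{(u,v)\in Z: \|(u,v)\|_3 < N\}$ for $N$ larger than $\sup_{[0,T]}\|(u(t),v(t))\|_3$. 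The verification of hypotheses (a), (b) on the Fréchet differentiability of $\Phi_{\sigma,m}$ in $(\sigma,U)$ is routine from the explicit quadratic form of $\Phi_{\sigma,m}$ (each $\Psi_j$ is a continuous quadratic form on $H^{m+2}$, and the exponential coefficients are smooth in $\sigma$), so I would dispatch it quickly.

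**The core input is the main estimate, Proposition \ref{prop:main estimate}.** On $\mathcal O$ we have $\|(u,v)\|_3 < N$, so \eqref{eq:liapnov F_mu} reads
\begin{align*}
|\langle F(u,v),D\Phi_{\sigma,m}(u,v)\rangle| \le{}& (K_1+K_2 N)\,\Phi_{\sigma,m}(u,v)\\
&+(L_1+L_2 e^{\sigma})\,\Phi_{\sigma,m}(u,v)^{1/2}\partial_\sigma\Phi_{\sigma,m}(u,v)\\
&+\bigl[M_1+(M_2+M_3 e^{2\sigma})N\bigr]\,\partial_\sigma\Phi_{\sigma,m}(u,v),
\end{align*}
which has exactly the shape \eqref{eq:liapnov F} required in hypothesis (c), once I bound the $\sigma$-dependent factors: restricting attention to $\sigma\le\sigma_0<\log r_0$ (the regime we care about), $e^\sigma\le r_0$ and $e^{2\sigma}\le r_0^2$, so I may take $K=K_1+K_2N$, $L=L_1+L_2 r_0$, $M=M_1+(M_2+M_3 r_0^2)N$, all independent of $u,v,\sigma,m$. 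These are the constants named in the statement of Proposition \ref{prop:u analytic in x}.

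**Next I would run the Kato--Masuda conclusion for each fixed $m$.** With $s_0=\sigma_0$, the theorem gives $\Phi_{\sigma_m(t),m}(u(t),v(t))\le \Phi_{\sigma_0,m}(u_0,v_0)e^{Kt}$ on $[0,T]$, where $\sigma_m(t)=\sigma_0-\tfrac{2L}{K}\Phi_{\sigma_0,m}(u_0,v_0)^{1/2}(e^{Kt/2}-1)-Mt$. Since $\Phi_{\sigma_0,m}(u_0,v_0)\nearrow \Phi_{\sigma_0,\infty}(u_0,v_0)<\infty$ (finiteness because $u_0,v_0\in A(r_0)$ and $\sigma_0<\log r_0$, via Lemma \ref{lem:frechet}(ii)), we have $\sigma_m(t)\ge\sigma(t)$ for every $m$, where $\sigma(t)$ is the function in the statement; hence for every $m$,
\[
\Phi_{\sigma(t),m}(u(t),v(t))\le\Phi_{\sigma_m(t),m}(u(t),v(t))\le\Phi_{\sigma_0,\infty}(u_0,v_0)e^{Kt},
\]
using monotonicity of $\Phi_{\sigma,m}$ in $\sigma$. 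Letting $m\to\infty$ and invoking monotone convergence gives $\Phi_{\sigma(t),\infty}(u(t),v(t))\le\Phi_{\sigma_0,\infty}(u_0,v_0)e^{Kt}<\infty$, i.e. $\|u(t)\|_{\sigma(t),2}<\infty$ and $\|v(t)\|_{\sigma(t),2}<\infty$. By Lemma \ref{lem:frechet}(iii) (applied with any $\sigma<\sigma(t)$, then using that $\sigma(t)$ can be approached from below, or directly since the bound holds at $\sigma(t)$ itself one gets membership in $A(e^{\sigma'})$ for all $\sigma'<\sigma(t)$; a harmless shrinking of $r_0$ or of $\sigma_0$ absorbs the strict inequality), $u(t),v(t)\in A(e^{\sigma(t)})$, and since $\sigma$ is continuous and decreasing on $[0,T]$, $A(e^{\sigma(t)})\subset A(e^{\sigma(T)})$.

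**The main obstacle is not in this proposition itself but in establishing Proposition \ref{prop:main estimate}** — the eight estimates in Sections \ref{sec:estimates1}--\ref{sec:estimates2} — which is where the asymmetry of the system forces the shifted-index form of $\Phi^{(1)}_{\sigma,m}$. Granting that, the only delicate points here are the careful bookkeeping so that the constants $K,L,M$ come out independent of $m$ (they do, because the estimate \eqref{eq:liapnov F_mu} is $m$-independent and the $m\to\infty$ passage only uses monotone convergence), and the handling of the strict versus non-strict inequality $\sigma(t)$ vs.\ $\log r$ when quoting Lemma \ref{lem:frechet}(iii), which is resolved by an arbitrarily small decrease of $\sigma_0$ at the outset.
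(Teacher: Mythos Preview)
Your proposal is correct and follows essentially the same approach as the paper: apply Theorem~\ref{thm:KatoMasuda} for each fixed $m$ with $\mathcal O$ an $H^3$-ball containing the trajectory, choose $K,L,M$ by bounding $\|(u,v)\|_3$ and $e^{\sigma}$ uniformly, and then pass to the limit $m\to\infty$. The only cosmetic differences are that the paper takes $\bar\sigma=\log r_0$ rather than $\sigma_0$ when bounding $e^{\sigma}$, and it phrases the limit via Fatou's lemma rather than the monotonicity $\Phi_{\sigma(t),m}\le\Phi_{\sigma_m(t),m}$ plus monotone convergence; your concern about the strict inequality in Lemma~\ref{lem:frechet}(iii) is in fact a non-issue, since finiteness of $\|u(t)\|_{\sigma(t),2}$ together with Lemma~\ref{lem:frechet}(i) immediately gives finiteness for all $\sigma'<\sigma(t)$.
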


\begin{proof}
Assume $u_{0},v_{0}\in A(r_{0})$ and set $\bar{\sigma}=\log r_{0}$.
Theorem \ref{thm:Chen Liu} implies $u(t),v(t)\in H^{\infty}$. Set
\begin{align*}
\mu_{0} & =1+\max\left\{ \|(u,v)\|_{3};\,t\in[0,T]\right\} >0,\\
\mathcal{O} & =\left\{ (u,v)\in\oplus^{2}H^{m+4};\,\|(u,v)\|_{3}<\mu_{0}\right\} ,\\
K & =K_{1}+K_{2}\mu_{0},\\
L & =L_{1}+L_{2}e^{\bar{\sigma}},\\
M & =M_{1}+(M_{2}+M_{3}e^{2\bar{\sigma}})\mu_{0}.
\end{align*}
Then $\left(u(t),v(t)\right)\in\mathcal{O}$ for $t\in[0,T]$. Proposition
\ref{prop:main estimate} implies that 
\begin{align}
 & \left|\langle F(u,v),D\Phi_{\sigma,m}(u,v)\rangle\right|\label{eq:importantestimate}\\
 & \le K\Phi_{\sigma,m}(u,v)+L\Phi_{\sigma,m}(u,v)^{1/2}\partial_{\sigma}\Phi_{\sigma,m}(u,v)+M\partial_{\sigma}\Phi_{\sigma,m}(u,v)\nonumber 
\end{align}
holds for any $(u,v)\in\mathcal{O},\sigma\le\bar{\sigma}.$ This corresponds
to \eqref{eq:liapnov F} in Theorem \ref{thm:KatoMasuda}. Since $u_{0}\in A(r_{0})$,
we have $\|u_{0}\|_{\sigma_{0},2}<\infty$ by Proposition \ref{prop:families of norms}. 

Set 
\begin{align*}
\rho_{m}(t) & =\Phi_{\sigma_{0},m}(u_{0},v_{0})e^{Kt},\\
\sigma_{m}(t) & =\sigma_{0}-\int_{0}^{t}\left(L\rho_{m}(\tau)^{1/2}+M\right)\,d\tau\\
 & =\sigma_{0}-\frac{2L}{K}\Phi_{\sigma_{0},m}(u_{0},v_{0})^{1/2}(e^{Kt/2}-1)-Mt.
\end{align*}
These functions correspond to $r(t)$ and $s(t)$ in Theorem \ref{thm:KatoMasuda}.
We can apply Theorem \ref{thm:KatoMasuda} to obtain
\begin{equation}
\Phi_{\sigma_{m}(t),m}(u(t),v(t))\le\rho_{m}(t),\quad t\in[0,T].\label{eq:Liaponov conclusion}
\end{equation}
Set 
\begin{align*}
\rho(t) & =\lim_{m\to\infty}\rho_{m}(t),\\
\Phi_{\sigma_{0},\infty}(u_{0},v_{0}) & =\lim_{m\to\infty}\Phi_{\sigma_{0},m}(u_{0},v_{0})=\frac{1}{2}e^{-2\sigma_{0}}\left(\|u\|_{\sigma_{0},2}^{2}-\|u\|_{2}^{2}\right)+\frac{1}{2}\|v\|_{\sigma_{0},2}^{2}.\\
\sigma(t) & =\sigma_{0}-\frac{2L}{K}\Phi_{\sigma_{0},\infty}(u_{0},v_{0})^{1/2}(e^{Kt/2}-1)-Mt,\\
 & =\sigma_{0}-\frac{L}{K}\left[e^{-2\sigma_{0}}\left(\|u\|_{\sigma_{0},2}^{2}-\|u\|_{2}^{2}\right)+\|v\|_{\sigma_{0},2}^{2}\right]^{1/2}(e^{Kt/2}-1)-Mt.
\end{align*}
Notice that $\sigma(t)$ is a decreasing function in $t$. We have
$\rho_{m}(t)\le\rho_{m+1}(t)\le\rho(t)$ and $\sigma_{m}(t)\ge\sigma_{m+1}(t)\ge\sigma(t)$,
$\sigma_{m}(t)\to\sigma(t)\,(\text{as }m\to\infty)$. By Fatou's Lemma
and \eqref{eq:Liaponov conclusion}, we get
\begin{align*}
 & \|u(t)\|_{\sigma(t),2}^{2}+\|v(t)\|_{\sigma(t),2}^{2}\\
 & =\sum_{j=0}^{\infty}\frac{1}{j!^{2}}e^{2j\sigma(t)}\left(\|u^{(j)}(t)\|_{2}^{2}+\|v^{(j)}(t)\|_{2}^{2}\right)\\
 & \le\liminf_{m\to\infty}\sum_{j=0}^{m}\frac{1}{j!^{2}}e^{2j\sigma_{m}(t)}\left(\|u^{(j)}(t)\|_{2}^{2}+\|v^{(j)}(t)\|_{2}^{2}\right)\\
 & =\|u(t)\|_{2}^{2}+2\liminf_{m\to\infty}\left(e^{2\sigma_{m}(t)}\Phi_{\sigma_{m}(t),m}^{(1)}\left(u(t)\right)+\Phi_{\sigma_{m}(t),m}^{(2)}\left(v(t)\right)\right)\\
 & \le\|u(t)\|_{2}^{2}+2\liminf_{m\to\infty}\left(e^{2\sigma_{m}(t)}+1\right)\rho_{m}(t)\\
 & =\|u(t)\|_{2}^{2}+2\left(e^{2\sigma(t)}+1\right)\rho(t)<\infty.
\end{align*}
Therefore $u(t),v(t)\in A(e^{\sigma(t)})\subset A(e^{\sigma(T)})$
for $t\in[0,T]$.
\end{proof}
\begin{prop}
The mapping $[0,T]\to\oplus^{2}A(e^{2\pi\sigma(T)}),\,t\mapsto(u(t),v(t))$
is continuous.
\end{prop}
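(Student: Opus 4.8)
The plan is to combine the quantitative bound established in Proposition \ref{prop:u analytic in x} with the continuity of $(u,v)$ into the Sobolev spaces furnished by Theorem \ref{thm:Chen Liu}, and then invoke Lemma \ref{lem:KatoMasudaLem2.4} to upgrade $H^\infty$-continuity to continuity in the stronger Fr\'echet topology of $\oplus^2 A(e^{\sigma(T)})$. Throughout, write $R = e^{\sigma(T)}$ and fix $\sigma' < \sigma(T)$; by Proposition \ref{prop:families of norms} it suffices to show $t \mapsto (u(t),v(t))$ is continuous from $[0,T]$ into $\oplus^2 H^\infty$ (which is automatic from Theorem \ref{thm:Chen Liu}, since $s$ can be taken arbitrarily large and $(u,v) \in \mathcal{C}([0,T], H^s \times H^{s-1})$ for every $s$) together with a local boundedness statement of the form $\sup_{t \in [0,T]} \|u(t)\|_{\sigma(T),2} < \infty$ and likewise for $v$.

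First I would record the uniform bound. From the proof of Proposition \ref{prop:u analytic in x} we have, for every $t \in [0,T]$,
\[
\|u(t)\|_{\sigma(t),2}^2 + \|v(t)\|_{\sigma(t),2}^2 \le \|u(t)\|_2^2 + 2\bigl(e^{2\sigma(t)} + 1\bigr)\rho(t).
\]
Since $\sigma(\cdot)$ is decreasing, $\sigma(t) \ge \sigma(T)$ for $t \in [0,T]$, so $\|u(t)\|_{\sigma(T),2} \le \|u(t)\|_{\sigma(t),2}$ and similarly for $v$; and the right-hand side above is bounded on the compact interval $[0,T]$ because $\rho(t) = \Phi_{\sigma_0,\infty}(u_0,v_0)e^{Kt}$ is continuous and $\|u(t)\|_2^2 \le \|u(t)\|_3^2 \le \mu_0^2$. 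Hence there is a constant $B$ with $\|u(t)\|_{\sigma(T),2} \le B$ and $\|v(t)\|_{\sigma(T),2} \le B$ for all $t \in [0,T]$, i.e. the curves $\{u(t)\}$ and $\{v(t)\}$ are norm-bounded in $A(e^{\sigma(T)})$ in the sense of the $\|\cdot\|_{\sigma(T),2}$ norm.

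Next I would fix $t_* \in [0,T]$ and an arbitrary $\sigma' < \sigma(T)$, and set $f_n = u(t_n) - u(t_*)$ for any sequence $t_n \to t_*$ in $[0,T]$. Then $\|f_n\|_{\sigma(T),2} \le 2B$ is bounded, and by Theorem \ref{thm:Chen Liu} we have $u(t_n) \to u(t_*)$ in $H^s$ for every $s$, hence $f_n \to 0$ in $H^\infty$. Lemma \ref{lem:KatoMasudaLem2.4} (applied with $r$ chosen so that $\sigma(T) < \log r$, e.g. $r = e^{\sigma(T)}$ — more precisely, since $f_n \in A(e^{\sigma(T)})$ with $\|f_n\|_{\sigma(T),2}$ bounded and $\sigma(T) < \log(e^{\sigma(T)})$ fails marginally, one takes $\sigma$ slightly below $\sigma(T)$ and $\sigma'$ below that) then gives $\|f_n\|_{\sigma',2} \to 0$. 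The same argument applies to $v$. By Proposition \ref{prop:families of norms}, convergence in all the $\|\cdot\|_{\sigma',2}$ norms with $\sigma' < \sigma(T)$ is exactly convergence in the Fr\'echet topology of $A(e^{\sigma(T)})$, so $(u(t_n),v(t_n)) \to (u(t_*),v(t_*))$ in $\oplus^2 A(e^{\sigma(T)})$; since $t_n \to t_*$ was arbitrary, $t \mapsto (u(t),v(t))$ is continuous.

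The main obstacle is the borderline nature of the radius: Lemma \ref{lem:KatoMasudaLem2.4} and Lemma \ref{lem:frechet}(ii) require a strict inequality $\sigma < \log r$, so I must be slightly careful to organize the argument as a nested triple $\sigma' < \sigma < \sigma(T)$, using the uniform bound at level $\sigma(T)$ to deduce a uniform bound at level $\sigma$ (via Lemma \ref{lem:frechet}(i)), then applying Lemma \ref{lem:KatoMasudaLem2.4} at levels $\sigma, \sigma'$. A minor secondary point is to confirm that the value $\sigma(T)$ appearing in the target space is the correct one — the displayed statement writes $A(e^{2\pi\sigma(T)})$, which appears to be a typo for $A(e^{\sigma(T)})$ consistent with Proposition \ref{prop:u analytic in x}; the proof is identical either way since $\sigma(T)$ is just some fixed real number and continuity into the smaller of any two such spaces is what is proved. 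Everything else is routine once the bound from Proposition \ref{prop:u analytic in x} is in hand.
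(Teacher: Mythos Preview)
Your proof is correct and follows essentially the same route as the paper: establish a uniform bound on $\|u(t)\|_{\sigma(T),2}$ and $\|v(t)\|_{\sigma(T),2}$ from the estimate in Proposition~\ref{prop:u analytic in x}, use Theorem~\ref{thm:Chen Liu} for $H^\infty$-convergence, and then apply Lemma~\ref{lem:KatoMasudaLem2.4} together with Proposition~\ref{prop:families of norms} to conclude. You are in fact slightly more careful than the paper about the strict inequality $\sigma<\log r$ required in Lemma~\ref{lem:KatoMasudaLem2.4}, and your observation that $e^{2\pi\sigma(T)}$ is a typo for $e^{\sigma(T)}$ is correct.
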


\begin{proof}
Let $\left\{ t_{n}\right\} \subset[0,T]$ be a sequence converging
to $t_{\infty}\in[0,T]$. We have $(u(t_{n}),v(t_{n}))\to(u(t_{\infty}),v(t_{\infty}))$
in $\oplus^{2}H^{\infty}$. In particular, $\|u(t_{n})\|_{2}$ is
bounded. On the other hand, $\|u(t_{n})\|_{\sigma(T),2},\|v(t_{n})\|_{\sigma(T),2}\,(n\ge0)$
are bounded since
\begin{align*}
\|u(t)\|_{\sigma(T),2}^{2}+\|v(t)\|_{\sigma(T),2}^{2} & \le\|u(t)\|_{\sigma(t),2}^{2}+\|v(t)\|_{\sigma(t),2}^{2}\\
 & \le\|u(t)\|_{2}^{2}+2\left(e^{2\sigma(t)}+1\right)\rho(t)
\end{align*}
by the proof of Proposition \ref{prop:u analytic in x}. Lemma \ref{lem:KatoMasudaLem2.4}
implies that $(u(t_{n}),v(t_{n}))$ converges to $(u(t_{\infty}),v(t_{\infty}))$
with respect to $\|\cdot\|_{\sigma',2}\,(\sigma'<\sigma(T))$. By
Proposition \ref{prop:families of norms}, this means convergence
in $\oplus^{2}A(e^{\sigma(T)})$.
\end{proof}

\subsection{Analyticity in the space and time variables\label{subsec:Analyticity t, x}}

We continue the proof of Theorem \ref{thm:main}. In the previous
subsection, we have established the analyticity in the space variable.
Here, we will prove the analyticity in the space and time variables.
By convention, a real-analytic function on a closed interval is real-analytic
on some open neighborhood of the closed interval.
\begin{prop}
Under the condition of Theorem \ref{thm:main}, for any $T>0$ there
exists $\delta_{T}>0$ such that we have $(u,v)\in\mathcal{C}^{\omega}([0,T],\oplus^{2}A(\delta_{T}))$.
\end{prop}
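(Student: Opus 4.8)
The plan is to localize in time. I fix $s=2$ and apply the Cauchy--Kowalevsky-type Theorem~\ref{thm:CP local2} at each base point $t_{0}\in[0,T]$, with an analytic lifespan bounded below independently of $t_{0}$, and then glue the resulting local holomorphic solutions. By Proposition~\ref{prop:u analytic in x} the function $\sigma(t)$ is decreasing, so $(u(t_{0}),v(t_{0}))\in\oplus^{2}A(e^{\sigma(t_{0})})\subset\oplus^{2}A(e^{\sigma(T)})$ for every $t_{0}\in[0,T]$. Fix $\Delta\in\,]0,1]$ with $\Delta<e^{\sigma(T)}/e$. By Proposition~\ref{prop:embed} the inclusion $A(e^{\sigma(T)})\hookrightarrow E_{\Delta,s+1}$ is continuous, and the topology of $A(e^{\sigma(T)})$ is generated by the norms $\|\cdot\|_{\sigma',2}$ with $\sigma'<\sigma(T)$ (Proposition~\ref{prop:families of norms}); hence there is $C>0$ with $\|w\|_{(\Delta,s+1)}\le C\|w\|_{\sigma(T),2}$ for all $w\in A(e^{\sigma(T)})$. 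From the proof of Proposition~\ref{prop:u analytic in x}, $\sup_{t\in[0,T]}\bigl(\|u(t)\|_{\sigma(T),2}+\|v(t)\|_{\sigma(T),2}\bigr)<\infty$ (since $\sigma(t)\ge\sigma(T)$, this is controlled by $\|u(t)\|_{2}$ and $\rho(t)$, both bounded on $[0,T]$). Therefore
\[
B:=\sup_{t_{0}\in[0,T]}\|(u(t_{0}),v(t_{0}))\|_{(\Delta,s+1)}<\infty .
\]

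Next I would invoke Theorem~\ref{thm:CP local2} with the data $(u(t_{0}),v(t_{0}))$ prescribed at time $t_{0}$ — permissible because \eqref{eq:2CH CP} is autonomous — and with the parameter $R$ set equal to $B$. Inspecting that proof, the constants $L_{\Delta}$ and $M_{\Delta}$ are bounded above in terms of $B$ alone (they depend on the data only through $\|(u(t_{0}),v(t_{0}))\|_{(\Delta,s+1)}\le B$ and through $R=B$), so the lifespan $T_{\Delta}=R/(16L_{\Delta}R+8M_{\Delta})$ is bounded below by some $\tau_{0}>0$ independent of $t_{0}$. Thus, for every $t_{0}\in[0,T]$, the Cauchy problem \eqref{eq:2CH CP} with data $(u(t_{0}),v(t_{0}))$ at $t=t_{0}$ has, for each $d\in\,]0,1[$, a unique holomorphic solution $W_{t_{0}}$ on $D(t_{0},\tau_{0}(1-d))$ with values in $\oplus^{2}E_{\Delta d,s+1}$. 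Fixing $d=1/2$ and setting $\delta_{T}:=\Delta/2$, Proposition~\ref{prop:embed} makes $W_{t_{0}}$ holomorphic with values in $\oplus^{2}A(\delta_{T})$ on the disk $D(t_{0},\tau_{0}/2)$.

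Finally I would identify $W_{t_{0}}$ with $(u,v)$ on real times near $t_{0}$: the restriction of $W_{t_{0}}$ to $[t_{0},t_{0}+\tau_{0}/2)$ is a $\mathcal{C}^{1}$ solution of \eqref{eq:2CH CP} with values in $\oplus^{2}E_{\Delta/2,s+1}\subset\oplus^{2}H^{\infty}$ and with data $(u(t_{0}),v(t_{0}))$ at $t_{0}$, so by the uniqueness in Theorem~\ref{thm:Chen Liu} it equals $(u,v)$ there. Consequently any two of the $W_{t_{0}}$ coincide with $(u,v)$ on a nonempty real interval, hence agree on a connected open subset of $\mathbb{C}$ by the identity theorem, and all the $W_{t_{0}}$ patch to one holomorphic map $\widetilde{W}$ on the open neighborhood $\bigcup_{t_{0}\in[0,T]}D(t_{0},\tau_{0}/2)$ of $[0,T]$, with values in $\oplus^{2}A(\delta_{T})$ and $\widetilde{W}|_{[0,T]}=(u,v)$. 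This is precisely $(u,v)\in\mathcal{C}^{\omega}([0,T],\oplus^{2}A(\delta_{T}))$.

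The main obstacle is the first step — obtaining an analytic lifespan $\tau_{0}$ uniform over $t_{0}\in[0,T]$ — which boils down to the uniform bound $B$ on the $E_{\Delta,s+1}$-norms along the orbit; that bound rests on the global Kato--Masuda estimate of Subsection~\ref{subsec:Analyticity x} combined with the two-sided embedding between $A(r)$ and $E_{\delta,s+1}$ (Proposition~\ref{prop:embed}). Once $B$ is in hand, the uniformity of $\tau_{0}$ is bookkeeping inside the proof of Theorem~\ref{thm:CP local2}, and the gluing is routine; in particular no smallness assumption on $T$ is required.
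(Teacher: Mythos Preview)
Your argument is correct and takes a somewhat different route from the paper's. The paper uses a continuation/blow-up argument: it applies Theorem~\ref{thm:CP local2} once at $t=0$ to get analyticity on a short interval, sets $T^{*}$ equal to the supremum of times $T$ for which the conclusion holds with some $\delta_{T}$, and derives a contradiction from $T^{*}<\infty$ by applying Theorem~\ref{thm:CP local2} once more at $t=T^{*}$, where Proposition~\ref{prop:u analytic in x} guarantees that the data are still analytic. You instead extract the \emph{quantitative} content of Proposition~\ref{prop:u analytic in x}---the uniform bound $\sup_{[0,T]}\|(u,v)\|_{\sigma(T),2}<\infty$---which via Proposition~\ref{prop:embed} bounds the $E_{\Delta,s+1}$-norms uniformly along the orbit and hence bounds the Ovsyannikov lifespan in Theorem~\ref{thm:CP local2} below by some $\tau_{0}>0$ independent of the base point; a single covering of $[0,T]$ by disks of radius $\tau_{0}/2$ then finishes the job. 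Your approach is more constructive (it produces the explicit $\delta_{T}=\Delta/2$ with any $\Delta<e^{\sigma(T)}/e$) and avoids the supremum/contradiction machinery, while the paper's version needs only the qualitative analyticity of $(u(t),v(t))$ at each fixed $t$ and slightly less bookkeeping. One minor wording point: the continuous embedding in Proposition~\ref{prop:embed} strictly yields $\|w\|_{(\Delta,s+1)}\le C\|w\|_{\sigma',2}$ for some $\sigma'<\sigma(T)$ rather than for $\sigma'=\sigma(T)$, but since you only apply it to $(u(t),v(t))$, for which $\|\cdot\|_{\sigma(T),2}$ is finite by the Kato--Masuda estimate and dominates $\|\cdot\|_{\sigma',2}$, your conclusion $B<\infty$ is unaffected.
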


\begin{proof}
We have $(u_{0},v_{0})\in\oplus^{2}E_{\Delta,s+1}$ for any $\Delta<r_{0}/e$.
Let $s\ge2$ and assume $\Delta\le1$. By Theorem \ref{thm:CP local2},
there exists $T_{\Delta}>0$ such that the Cauchy problem \eqref{eq:2CH CP}
has a unique solution $(\tilde{u},\tilde{v})\in\mathcal{C}^{\omega}(|t|\le T_{\Delta}(1-d),\oplus^{2}E_{\Delta d,s+1})$
for $0<d<1$. We have $(\tilde{u},\tilde{v})=(u,v)$ by the local
uniqueness of \cite[Theorem 3.1]{ChenLiu}, where $(u,v)$ is the
solution in Theorem \ref{thm:Chen Liu}. Set $d=1/2,\,T'_{\Delta}=T_{\Delta}/2$.
Then $(\tilde{u},\tilde{v})=(u,v)\in\mathcal{C}^{\omega}(|t|\le T'_{\Delta},\oplus^{2}E_{\Delta/2,s+1})$.
By Proposition \ref{prop:embed}, a convergent series in $E_{\Delta/2,s+1}$
is convergent in $A(\Delta/2)$. We have $(u,v)\in\mathcal{C}^{\omega}(|t|\le T'_{\Delta},\oplus^{2}A(\Delta/2))$.

We have shown that $(u,v)$ is analytic in $t$ at least locally.
Our next step is to show that it is analytic in $t$ globally. Set
\begin{align*}
 & S=\left\{ T>0;\,(u,v)\in\mathcal{C}^{\omega}([0,T],\oplus^{2}A(\delta_{T}))\;\text{for some}\;\delta_{T}>0\right\} \ni T'_{\Delta},\\
 & T^{*}=\sup S\ge T'_{\Delta}.
\end{align*}
We prove $T^{*}=\infty$ by contradiction. Assume $T^{*}<\infty$.
By Propositions \ref{prop:u analytic in x} and \ref{prop:embed},
there exists $\delta^{*}\in]0,\Delta/2]$ such that 
\[
(u(T^{*}),v(T^{*}))\in\oplus^{2}E_{\delta',s+1}\,(0<\delta'<\delta^{*}).
\]
By Theorem \ref{thm:CP local2} (with $t$ replaced with $t-T^{*}$),
there exists $\varepsilon>0$ and $(\hat{u},\hat{v})\in\mathcal{C}^{\omega}([T^{*}-\varepsilon,T^{*}+\varepsilon],\oplus^{2}E_{\delta'/2,s+1})$
such that 
\begin{align*}
 & \hat{u}_{t}+\beta\hat{u}\hat{u}_{x}+(1-\partial_{x}^{2})^{-1}\partial_{x}\left[-\alpha\hat{u}+\dfrac{3-\beta}{2}\hat{u}^{2}+\dfrac{\beta}{2}\hat{u}_{x}^{2}+\hat{v}+\dfrac{1}{2}\hat{v}^{2}\right]=0,\\
 & \hat{v}_{t}+\hat{u}_{x}+(\hat{u}\hat{v})_{x}=0,\\
 & (\hat{u},\hat{v})|_{t=T^{*}}=(u(T^{*}),v(T^{*})).
\end{align*}
By the local uniqueness, we have $(\hat{u},\hat{v})=(u,v)$. Namely,
$(\hat{u},\hat{v})$ is an extension of $(u,v)$ up to $t\le T^{*}+\varepsilon$
(valued in $\oplus^{2}E_{\delta'/2,s+1}\subset\oplus^{2}A(\delta'/2)$).
Therefore $T^{*}+\varepsilon\in S$. This is a contradiction.

Finally we prove the analyticity in $(t,x)$.
\end{proof}
\begin{prop}
\label{prop:global analyticity}Under the condition of Theorem \ref{thm:main},
the Cauchy problem \eqref{eq:2CH CP} has a unique solution $(u,v)\in\oplus^{2}\mathcal{C}^{\omega}([0,\infty)\times\mathbb{R})$.
\end{prop}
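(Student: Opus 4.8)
The plan is to combine the two separate regularity results already in hand — analyticity in $x$ for each fixed $t$ (Proposition \ref{prop:u analytic in x}) and local-in-time analyticity with values in $\oplus^{2}A(\delta_{T})$ (the preceding proposition) — into joint real-analyticity of $(u,v)$ in $(t,x)$, and then to read off uniqueness from Theorem \ref{thm:Chen Liu}. First I would record that, for every $T>0$, the preceding proposition gives $(u,v)\in\mathcal{C}^{\omega}([0,T],\oplus^{2}A(\delta_{T}))$ for some $\delta_{T}>0$; since these time-analytic extensions coincide with the global Sobolev solution of Theorem \ref{thm:Chen Liu} by the local uniqueness of \cite[Theorem 3.1]{ChenLiu}, it suffices to prove $(u,v)\in\oplus^{2}\mathcal{C}^{\omega}([0,T]\times\mathbb{R})$ for each $T$ and then take the union over $T$.

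The key analytic input is a pointwise estimate on strips: if $f\in A(r)$, then for $0<r''<r'<r$ the holomorphic extension of $f$ to $S(r'')$ satisfies $\sup_{z\in S(r'')}|f(z)|\le c(r',r'')\,\|f\|_{L_{x,y}^{2}(S(r'))}$. This follows from the area mean-value property of holomorphic functions, writing $f(z_{0})$ as the average of $f$ over the disc of radius $r'-r''$ centred at $z_{0}\in S(r'')$ (which lies inside $S(r')$) and then applying Cauchy--Schwarz. Combined with Proposition \ref{prop:families of norms}, this says that the evaluation functionals $A(r)\ni f\mapsto f(z)$ are continuous, with bounds that are uniform for $z$ in any fixed substrip and invariant under real translation of $z$.

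Now fix $T>0$ and $t_{0}\in[0,T]$. Since $t\mapsto(u(t),v(t))$ is holomorphic from a neighbourhood of $t_{0}$ into $\oplus^{2}A(\delta_{T})$, I would write $u(t,\cdot)=\sum_{k\ge0}a_{k}\,(t-t_{0})^{k}$ with $a_{k}\in A(\delta_{T})$; the Cauchy estimates for Banach-space-valued holomorphic functions then yield $\rho>0$, $r'<\delta_{T}$ and $C>0$ with $\|a_{k}\|_{L_{x,y}^{2}(S(r'))}\le C\rho^{-k}$ for all $k$. By the estimate of the previous paragraph, $|a_{k}(z)|\le C'\rho^{-k}$ for $z\in S(r'')$ with any $r''<r'$, so the double series $\sum_{k\ge0}a_{k}(z)\,(w-t_{0})^{k}$ converges absolutely and uniformly on $\{\,|w-t_{0}|\le\rho/2\,\}\times S(r''/2)$ and defines there a holomorphic function of the two complex variables $(w,z)$. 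Its restriction to real arguments equals $u(t,x)$ near $\{t_{0}\}\times\mathbb{R}$ (both agree as convergent power series pointwise), so $u$ is real-analytic on a neighbourhood of $\{t_{0}\}\times\mathbb{R}$; the same argument applies to $v$. As $t_{0}\in[0,T]$ and $T>0$ are arbitrary, $(u,v)\in\oplus^{2}\mathcal{C}^{\omega}([0,\infty)\times\mathbb{R})$, and uniqueness is precisely the uniqueness asserted in Theorem \ref{thm:Chen Liu}.

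The main obstacle is the passage in the third paragraph from \emph{separately} (Banach-valued) analytic to \emph{jointly} analytic on the real domain: Banach-valued analyticity a priori controls only Sobolev or $L^{2}$-on-strip norms of the Taylor coefficients $a_{k}$, whereas convergence of the double series needs genuine pointwise bounds $|a_{k}(z)|$ on a fixed complex strip. Bridging this gap is exactly what the Cauchy/mean-value estimate does, and one must verify that the convergence radius $\rho$ and the strip width $r''$ can be taken uniform as $x$ runs over $\mathbb{R}$ — which holds automatically here because the strips $S(r)$ are invariant under real translation.
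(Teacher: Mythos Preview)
Your argument is correct and takes a genuinely different route from the paper's. Both start from the same input, namely $(u,v)\in\mathcal{C}^{\omega}([0,T],\oplus^{2}A(\delta_{T}))$ furnished by the preceding proposition. The paper then proceeds indirectly: from Cauchy's integral formula in the Fr\'echet space $A(\delta_{T})$ it extracts bounds of the form $\|\partial_{x}^{j}\partial_{t}^{k}u\|_{L^{2}(\mathbb{R}\times[0,T])}\le\sqrt{T}\,C^{j+k+1}(j+k)!$, feeds these into the binomial expansion of powers of the Laplacian $\Delta=\partial_{t}^{2}+\partial_{x}^{2}$, and invokes the Komatsu/Kotake--Narasimhan characterisation (growth of $\|\Delta^{\ell}u\|_{L^{2}}$ implies real-analyticity) to conclude. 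Your approach is more elementary and more direct: the area mean-value inequality converts the $L^{2}(S(r'))$ bounds on the Taylor coefficients $a_{k}$ into uniform sup-norm bounds on a substrip, so the double series is visibly a uniform limit of holomorphic functions of $(w,z)$, and no elliptic-regularity black box is needed. What the paper's route buys in exchange is an explicit $L^{2}$ control of all mixed derivatives $\partial_{x}^{j}\partial_{t}^{k}u$ over $\mathbb{R}\times[0,T]$, which is information your argument does not produce but which is not needed for the proposition as stated.
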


\begin{proof}
Let $T$ be fixed. For $r>0$ sufficiently small, we have 
\[
\partial_{t}^{k}u(t)=\frac{k!}{2\pi i}\int_{|\tau-t|=r}\frac{u(\tau)}{(\tau-t)^{k+1}}\,d\tau,\;t\in[0,T].
\]
The integral is performed in $A(\delta_{T})$ and converges with respect
to $\|\cdot\|_{(\sigma,0)}$ ($\sigma<\log\delta_{T}$). By Cauchy's
estimate, there exists $C_{0}>1/r$ such that 
\begin{align*}
\|\partial_{t}^{k}u(t)\|_{(\sigma,0)} & \le C_{0}k!r^{-k}<C_{0}^{k+1}k!.
\end{align*}
Therefore we have 
\begin{align*}
\|\partial_{x}^{j}\partial_{t}^{k}u(\cdot,t)\|_{0} & \le C_{0}^{k+1}e^{-j\sigma}j!k!
\end{align*}
and there exists $C>0$ such that 
\begin{equation}
\left\Vert \partial_{x}^{j}\partial_{t}^{k}u\right\Vert _{L^{2}(\mathbb{R}\times[0,T])}\le\sqrt{T}C^{j+k+1}(j+k)!.\label{eq:Komatsu}
\end{equation}
 Set $\Delta=\partial^{2}/\partial x^{2}+\partial^{2}/\partial t^{2}$.
The binomial expansion of $\Delta^{\ell}\,(\ell=0,1,2,\dots)$ and
\eqref{eq:Komatsu} yield
\begin{align*}
\left\Vert \Delta^{\ell}u\right\Vert _{L^{2}(\mathbb{R}\times[0,T])} & \le\sqrt{T}C^{2\ell+1}(2\ell)!\sum_{p=0}^{\ell}\binom{\ell}{p}\\
 & \le\sqrt{T}C^{2\ell+1}(2\ell)!2^{\ell}.
\end{align*}
This estimates implies the real-analyticity of $u$ due to \cite{Komatsu}
or \cite{Kotake}.
\end{proof}

\section{Proof of Proposition \ref{prop:main estimate}: Part 1\label{sec:estimates1}}

The goal of this section is to get an estimate of the first sum of
the right-hand side of \eqref{eq:frechet} in Proposition \ref{prop:frechet}.
We have to estimate the six quantities
\begin{align*}
 & \langle u^{(j)},\partial_{x}^{j}(uu_{x})\rangle_{2},\langle u^{(j)},\Lambda^{-2}\partial_{x}^{j+1}u\rangle_{2},\langle u^{(j)},\Lambda^{-2}\partial_{x}^{j+1}u^{2}\rangle_{2},\\
 & \langle u^{(j)},\Lambda^{-2}\partial_{x}^{j+1}u_{x}^{2}\rangle_{2},\langle u^{(j)},\Lambda^{-2}\partial_{x}^{j+1}v\rangle_{2},\langle u^{(j)},\Lambda^{-2}\partial_{x}^{j+1}v^{2}\rangle_{2}
\end{align*}
for $0<j=1,2,\dots,m+1$. 

\subsection{Estimate 1\label{subsec:Estimates1} }

We have $\langle u^{(j)},\partial_{x}^{j}(uu_{x})\rangle_{2}=P_{j}+Q_{j}$,
where 
\begin{align*}
P_{j} & =\langle u^{(j)},uu^{(j+1)}\rangle_{2},\\
Q_{j} & =\sum_{\ell=1}^{j}\binom{j}{\ell}\langle u^{(j)},u^{(\ell)}u^{(j-\ell+1)}\rangle_{2}\,(j\ge1).
\end{align*}
By \cite[(6.5)]{BHPglobal}, we have 
\[
|P_{j}|\le2^{4}\|u\|_{2}\|u^{(j)}\|_{2}^{2}.
\]
Therefore 
\begin{equation}
\left|\sum_{j=1}^{m+1}\frac{e^{2(j-1)\sigma}}{j!^{2}}P_{j}\right|\le2^{5}\|u\|_{2}\Phi_{\sigma,m}^{(1)}(u)\le2^{5}\|u\|_{2}\Phi_{\sigma,m}(u,v).\label{eq:P_j sum}
\end{equation}
On the other hand, by using Lemma \ref{lem:known facts about H^s}
(v) and splitting the sum into the terms for $\ell=1,j$ and those
in-between, we get 
\begin{align}
|Q_{j}|\le & 8\sum_{\ell=1}^{j}\binom{j}{\ell}\|u^{(j)}\|_{2}\left(\|u^{(\ell)}\|_{2}\|u^{(j-\ell+1)}\|_{1}+\|u^{(\ell)}\|_{1}\|u^{(j-\ell+1)}\|_{2}\right)\label{eq:Qj}\\
\le & 16(j+1)\|u^{(1)}\|_{2}\|u^{(j)}\|_{2}^{2}+q_{j},\nonumber \\
q_{j}=8 & \sum_{\ell=2}^{j-1}\binom{j}{\ell}\|u^{(j)}\|_{2}\left(\|u^{(\ell)}\|_{2}\|u^{(j-\ell)}\|_{2}+\|u^{(\ell-1)}\|_{2}\|u^{(j-\ell+1)}\|_{2}\right).\label{eq:qj}
\end{align}
We set $q_{j}=0$ if $j\le2$. As for the first term in the right-hand
side of \eqref{eq:Qj}, we have $16(j+1)=32+16(j-1)$ and 
\begin{align}
 & \sum_{j=1}^{m+1}\frac{e^{2(j-1)\sigma}}{j!^{2}}\cdot16(j+1)\|u^{(1)}\|_{2}\|u^{(j)}\|_{2}^{2}\label{eq:1010-1}\\
 & \le64\|u^{(1)}\|_{2}\Phi_{\sigma,m}^{(1)}(u)+16\|u^{(1)}\|_{2}\partial_{\sigma}\Phi_{\sigma,m}^{(1)}(u)\nonumber \\
 & \le64\|u\|_{3}\Phi_{\sigma,m}(u,v)+16\|u\|_{3}\partial_{\sigma}\Phi_{\sigma,m}(u,v).\nonumber 
\end{align}
Set $a_{k}=e^{k\sigma}\|u^{(k)}\|_{2}^{2}/k!$. Then, \eqref{eq:qj},
\eqref{eq:AB1}, \eqref{eq:AB2} and Proposition \ref{prop:A and tilde A}
imply
\begin{align}
 & \sum_{j=3}^{m+1}\frac{e^{2(j-1)\sigma}}{j!^{2}}q_{j}\label{eq:1010-2}\\
 & \le8e^{-2\sigma}\sum_{j=3}^{m+1}\sum_{\ell=2}^{j-1}a_{j}a_{\ell}a_{j-\ell}+8e^{-2\sigma}\sum_{j=3}^{m+1}\sum_{\ell=2}^{j-1}\frac{j-\ell+1}{\ell}a_{j}a_{\ell-1}a_{j-\ell+1}\nonumber \\
 & \le\frac{32\pi}{\sqrt{3}}e^{\sigma}\sqrt{\Phi_{\sigma,m}^{(1)}(u)}\,\partial_{\sigma}\Phi_{\sigma,m}^{(1)}(u)\le\frac{32\pi}{\sqrt{3}}e^{\sigma}\sqrt{\Phi_{\sigma,m}(u,v)}\,\partial_{\sigma}\Phi_{\sigma,m}(u,v).\nonumber 
\end{align}
By combining \eqref{eq:P_j sum}, \eqref{eq:Qj}, \eqref{eq:1010-1}
and \eqref{eq:1010-2}, we get 
\begin{align}
 & \left|\sum_{j=1}^{m+1}\frac{e^{2(j-1)\sigma}}{j!^{2}}\langle u^{(j)},\partial_{x}^{j}(uu_{x})\rangle_{2}\right|\label{eq:subsection1conclusion}\\
 & \le96\|u\|_{3}\Phi_{\sigma,m}(u,v)+\left(16\|u\|_{3}+\frac{32\pi}{\sqrt{3}}e^{\sigma}\sqrt{\Phi_{\sigma,m}(u,v)}\right)\partial_{\sigma}\Phi_{\sigma,m}(u,v).\nonumber 
\end{align}

\subsection{Estimate 2}

Since 
\begin{align*}
 & |\langle u^{(j)},\Lambda^{-2}\partial_{x}^{j+1}u\rangle_{2}|\le\|u^{(j)}\|_{2}\|\Lambda^{-2}\partial_{x}^{j+1}u\|_{2}\\
 & \le\|u^{(j)}\|_{2}\|\partial_{x}^{j+1}u\|_{0}\le\|u^{(j)}\|_{2}^{2},
\end{align*}
we have 
\begin{align}
 & \left|\sum_{j=1}^{m+1}\frac{e^{2(j-1)\sigma}}{j!^{2}}\langle u^{(j)},\partial_{x}^{j+1}\Lambda^{-2}u\rangle_{2}\right|\label{eq:subsection2conclusion}\\
 & \le\sum_{j=1}^{m+1}\frac{e^{2(j-1)\sigma}}{j!^{2}}\|u^{(j)}\|_{2}^{2}\le2\Phi_{\sigma,m}^{(1)}(u)\le2\Phi_{\sigma,m}(u,v).\nonumber 
\end{align}

\subsection{Estimate 3}

In Subsection \ref{subsec:Estimates1}, we considered $\langle u^{(j)},\partial_{x}^{j}(uu_{x})\rangle_{2}$,
which is $1/2$ times $\langle u^{(j)},\partial_{x}^{j+1}u^{2}\rangle_{2}$.
In the present subsection we consider $\langle u^{(j)},\partial_{x}^{j+1}\Lambda^{-2}u^{2}\rangle_{2}$.
We just neglect $\Lambda^{-2}\colon H^{2}\to H^{2}$ and follow \eqref{eq:subsection1conclusion}
to obtain 
\begin{align}
 & \left|\sum_{j=1}^{m+1}\frac{e^{2(j-1)\sigma}}{j!^{2}}\langle u^{(j)},\Lambda^{-2}\partial_{x}^{j+1}u^{2}\rangle_{2}\right|\label{eq:subsection3conclusion}\\
 & \le192\|u\|_{3}\Phi_{\sigma,m}(u,v)+\left(32\|u\|_{3}+\frac{64\pi}{\sqrt{3}}e^{\sigma}\sqrt{\Phi_{\sigma,m}(u,v)}\right)\partial_{\sigma}\Phi_{\sigma,m}(u,v).\nonumber 
\end{align}

\subsection{Estimate 4}

We have
\begin{align*}
\langle u^{(j)},\Lambda^{-2}\partial_{x}^{j+1}(u_{x}^{2})\rangle_{2} & =\langle u^{(j)},(\Lambda^{-2}\partial_{x}^{2})\partial_{x}^{j-1}(u_{x}^{2})\rangle_{2}\\
 & =\sum_{k=0}^{j-1}\binom{j-1}{k}\langle u^{(j)},(\Lambda^{-2}\partial_{x}^{2})(u^{(k+1)}u^{(j-k)})\rangle_{2}\\
 & =\sum_{\ell=1}^{j}\binom{j-1}{\ell-1}\langle u^{(j)},(\Lambda^{-2}\partial_{x}^{2})(u^{(\ell)}u^{(j-\ell+1)})\rangle_{2}.
\end{align*}
Since the norm of $\Lambda^{-2}\partial_{x}^{2}\colon H^{2}\to H^{2}$
is 1, this operator can be neglected in estimating $\langle u^{(j)},(\Lambda^{-2}\partial_{x}^{2})(u^{(\ell)}u^{(j-\ell+1)})\rangle_{2}$.
Moreover we have
\[
\binom{j-1}{\ell-1}\le\binom{j}{\ell}\,(j\ge1).
\]
Therefore, following the calculation about $Q_{j}$ in Subsection
\ref{subsec:Estimates1}, we get 
\begin{align}
 & \left|\sum_{j=1}^{m+1}\frac{e^{2j\sigma}}{j!^{2}}\langle u^{(j)},\Lambda^{-2}\partial_{x}^{j+1}(u_{x}^{2})\rangle_{2}\right|\label{eq:subsection4conclusion}\\
 & \le64\|u\|_{3}\Phi_{\sigma,m}(u,v)+\left(16\|u\|_{3}+\frac{32\pi}{\sqrt{3}}e^{\sigma}\sqrt{\Phi_{\sigma,m}(u,v)}\right)\partial_{\sigma}\Phi_{\sigma,m}(u,v).\nonumber 
\end{align}

\subsection{Estimate 5\label{subsec:Estimates5}}

Since
\begin{align*}
 & \left|\langle u^{(j)},\Lambda^{-2}\partial_{x}^{j+1}v\rangle_{2}\right|\le\|u^{(j)}\|_{2}\|\Lambda^{-2}\partial_{x}v^{(j)}\|_{2}\\
 & \le\|u^{(j)}\|_{2}\|v^{(j)}\|_{1}\le\|u^{(j)}\|_{2}\|v^{(j-1)}\|_{2}\le\frac{1}{2}\left(\|u^{(j)}\|_{2}^{2}+\|v^{(j-1)}\|_{2}^{2}\right),
\end{align*}
we have
\begin{align}
 & \left|\sum_{j=1}^{m+1}\frac{e^{2(j-1)\sigma}}{j!^{2}}\langle u^{(j)},\Lambda^{-2}\partial_{x}^{j+1}v\rangle_{2}\right|\le\frac{1}{2}\sum_{j=1}^{m+1}\frac{e^{2(j-1)\sigma}}{j!^{2}}\left(\|u^{(j)}\|_{2}^{2}+\|v^{(j-1)}\|_{2}^{2}\right)\label{eq:subsection5conclusion}\\
 & \le\frac{1}{2}\sum_{j=1}^{m+1}\frac{e^{2(j-1)\sigma}}{j!^{2}}\|u^{(j)}\|_{2}^{2}+\frac{1}{2}\sum_{k=0}^{m}\frac{e^{2k\sigma}}{k!^{2}}\|v^{(k)}\|_{2}^{2}=\Phi_{\sigma,m}(u,v).\nonumber 
\end{align}

\subsection{Estimate 6}

We consider $\langle u^{(j)},\Lambda^{-2}\partial_{x}^{j+1}v^{2}\rangle_{2}$.
When $j=1$, we have 
\begin{align*}
 & \left|\langle u_{x},\Lambda^{-2}\partial_{x}^{2}v^{2}\rangle_{2}\right|\le\|u_{x}\|_{2}\|\Lambda^{-2}\partial_{x}^{2}v^{2}\|_{2}\le\|u\|_{3}\|v^{2}\|_{2}\le8\|u\|_{3}\|v\|_{2}^{2}.
\end{align*}
We multiply it with $e^{2(1-1)\sigma}/1!^{2}=1$. The product is bounded
by 16$\|u\|_{3}\Phi_{\sigma,m}^{(2)}(v)$.

Next assume $j\ge2$. Since the norm of $\Lambda^{-2}\partial_{x}^{2}\colon H^{2}\to H^{2}$
is 1, we have 
\begin{align*}
 & \left|\langle u^{(j)},\Lambda^{-2}\partial_{x}^{j+1}v^{2}\rangle_{2}\right|\le||u^{(j)}\|_{2}\|\partial_{x}^{j-1}v^{2}\|_{2}\\
 & \le8||u^{(j)}\|_{2}\sum_{\ell=0}^{j-1}\binom{j-1}{\ell}\|v^{(\ell)}\|_{2}\|v^{(j-\ell-1)}\|_{2}.
\end{align*}
Separating the term for $\ell=0$ from the other terms, we obtain
\begin{align*}
 & \left|\sum_{j=2}^{m+1}\frac{e^{2(j-1)\sigma}}{j!^{2}}\langle u^{(j)},\Lambda^{-2}\partial_{x}^{j+1}v^{2}\rangle_{2}\right|\le S_{0}+S_{1},\\
 & S_{0}=8\|v\|_{2}\sum_{j=2}^{m+1}\frac{e^{2(j-1)\sigma}}{j!^{2}}||u^{(j)}\|_{2}\|v^{(j-1)}\|_{2},\\
 & S_{1}=8\sum_{j=2}^{m+1}\frac{e^{2(j-1)\sigma}}{j!^{2}}||u^{(j)}\|_{2}\sum_{\ell=1}^{j-1}\binom{j-1}{\ell}\|v^{(\ell)}\|_{2}\|v^{(j-\ell-1)}\|_{2}.
\end{align*}
We have
\begin{align*}
S_{0} & \le4\|v\|_{2}\sum_{j=2}^{m+1}\frac{e^{2(j-1)\sigma}}{j!^{2}}\left(||u^{(j)}\|_{2}^{2}+\|v^{(j-1)}\|_{2}^{2}\right)\\
 & \le4\|v\|_{2}\sum_{j=2}^{m+1}\frac{e^{2(j-1)\sigma}}{j!^{2}}||u^{(j)}\|_{2}^{2}+4\|v\|_{2}\sum_{k=1}^{m}\frac{e^{2k\sigma}}{k!^{2}}\|v^{(k)}\|_{2}^{2}\\
 & \le8\|v\|_{2}\Phi_{\sigma,m}^{(1)}(u)+8\|v\|_{2}\Phi_{\sigma,m}^{(2)}(v)=8\|v\|_{2}\Phi_{\sigma,m}(u,v).
\end{align*}
On the other hand, \eqref{eq:AB3} and Proposition \ref{prop:A and tilde A}
imply
\begin{align*}
S_{1} & =8e^{-\sigma}\sum_{j=2}^{m+1}\sum_{\ell=1}^{j-1}\frac{1}{j}a_{j}b_{\ell}b_{j-\ell-1}\\
 & \le\frac{16\pi}{\sqrt{6}}\sqrt{\Phi_{\sigma,m}^{(2)}(v)}\,\sqrt{\partial_{\sigma}\Phi_{\sigma,m}^{(1)}(u)\cdot\partial_{\sigma}\Phi_{\sigma,m}^{(2)}(v)}\\
 & \le\frac{16\pi}{\sqrt{6}}\sqrt{\Phi_{\sigma,m}(u,v)}\,\partial_{\sigma}\Phi_{\sigma,m}(u,v).
\end{align*}
Summing up, we obtain
\begin{align}
 & \left|\sum_{j=1}^{m+1}\frac{e^{2(j-1)\sigma}}{j!^{2}}\langle u^{(j)},\Lambda^{-2}\partial_{x}^{j+1}v^{2}\rangle_{2}\right|\label{eq:subsection6conclusion}\\
 & \le(16\|u\|_{3}+8\|v\|_{2})\Phi_{\sigma,m}(u,v)+\frac{16\pi}{\sqrt{6}}\sqrt{\Phi_{\sigma,m}(u,v)}\partial_{\sigma}\Phi_{\sigma,m}(u,v).\nonumber 
\end{align}

\section{Proof of Proposition \ref{prop:main estimate}: Part 2\label{sec:estimates2}}

The goal of this section is to get an estimate of the second sum of
the right-hand side of \eqref{eq:frechet} in Proposition \ref{prop:frechet}.
We have to estimate the two quantities 
\[
\langle v^{(j)},u^{(j+1)}\rangle_{2},\langle v^{(j)},(uv)^{(j+1)}\rangle_{2}
\]
for $j=0,1,\dots,m$. Notice that the range of $j$ is different from
that in the previous section.

\subsection{Estimate 7\label{subsec:Estimates7}}

We have 
\[
\langle v^{(j)},u^{(j+1)}\rangle_{2}=I_{0}+2I_{1}+I_{2},\quad I_{i}=\langle v^{(j+i)},u^{(j+i+1)}\rangle_{0},
\]
where $\langle\cdot,\cdot\rangle_{0}$ is the inner product of $H^{0}=L^{2}$.
For $i=0,1,2$, we have
\[
|I_{i}|\le\|u^{(j+i+1)}\|_{0}\|v^{(j+i)}\|_{0}\le\|u^{(j+1)}\|_{2}\|v^{(j)}\|_{2}.
\]
By using $2XY\le X^{2}/(j+1)+(j+1)Y^{2}$, we get
\begin{equation}
|I_{i}|\le\frac{1}{2}\left[\frac{1}{j+1}\|u^{(j+1)}\|_{2}^{2}+(j+1)\|v^{(j)}\|_{2}^{2}\right].\label{eq:Ii}
\end{equation}
for $i=0,1,2$. Therefore
\begin{equation}
\left|\sum_{j=0}^{m}\frac{e^{2j\sigma}}{j!^{2}}\langle v^{(j)},u^{(j+1)}\rangle_{2}\right|\le2\sum_{j=0}^{m}\frac{e^{2j\sigma}}{j!^{2}}\left[\frac{1}{j+1}\|u^{(j+1)}\|_{2}^{2}+(1+j)\|v^{(j)}\|_{2}^{2}\right].\label{eq:v j u j+1}
\end{equation}
Here the sum involving $u$ is 
\begin{align*}
 & \sum_{j=0}^{m}\frac{e^{2j\sigma}}{j!^{2}}\frac{1}{j+1}\|u^{(j+1)}\|_{2}^{2}=\sum_{j=0}^{m}\frac{e^{2j\sigma}}{(j+1)!^{2}}\|u^{(j+1)}\|_{2}^{2}+\sum_{j=0}^{m}\frac{e^{2j\sigma}j}{(j+1)!^{2}}\|u^{(j+1)}\|_{2}^{2}\\
 & =\sum_{k=1}^{m+1}\frac{e^{2(k-1)\sigma}}{k!^{2}}\|u^{(k)}\|_{2}^{2}+\sum_{k=1}^{m+1}\frac{e^{2(k-1)\sigma}(k-1)}{k!^{2}}\|u^{(k)}\|_{2}^{2}=2\Phi_{\sigma,m}^{(1)}(u)+\partial_{\sigma}\Phi_{\sigma,m}^{(1)}(u).
\end{align*}
The sum involving $v$ is 
\[
\sum_{j=0}^{m}\frac{e^{2j\sigma}}{j!^{2}}(1+j)\|v^{(j)}\|_{2}^{2}=2\Phi_{\sigma,m}^{(2)}(v)+\partial_{\sigma}\Phi_{\sigma,m}^{(2)}(v).
\]
 Hence we get 
\begin{align}
 & \left|\sum_{j=0}^{m}\frac{e^{2j\sigma}}{j!^{2}}\langle v^{(j)},u^{(j+1)}\rangle_{2}\right|\label{eq:subsection7conclusion}\\
 & \le4\Phi_{\sigma,m}^{(1)}(u)+2\partial_{\sigma}\Phi_{\sigma,m}^{(1)}(u)+4\Phi_{\sigma,m}^{(2)}(v)+2\partial_{\sigma}\Phi_{\sigma,m}^{(2)}(v)\nonumber \\
 & =4\Phi_{\sigma,m}(u,v)+2\partial_{\sigma}\Phi_{\sigma,m}(u,v).\nonumber 
\end{align}

\subsection{Estimate 8\label{subsec:Estimates8}}

We shall prove

\begin{align}
 & \left|\sum_{j=1}^{m}\frac{e^{2j\sigma}}{j!^{2}}\langle v^{(j)},(uv)^{(j+1)}\rangle_{2}\right|\label{eq:subsection8conclusion}\\
 & \le\left[(66+16e^{2\sigma})\|u\|_{3}+(18+8e^{2\sigma})\|v\|_{3}\right]\Phi_{\sigma,m}(u,v)\allowdisplaybreaks\nonumber \\
 & \quad+\frac{16\pi}{\sqrt{3}}(1+\sqrt{2})e^{\sigma}\sqrt{\Phi_{\sigma,m}(u,v)}\partial_{\sigma}\Phi_{\sigma,m}(u,v)\allowdisplaybreaks\nonumber \\
 & \quad+\left[8\|u\|_{3}+(4e^{2\sigma}+13)\|v\|_{3}\right]\partial_{\sigma}\Phi_{\sigma,m}(u,v).\nonumber 
\end{align}
This inequality is the result of \eqref{eq:1sttermconclusion}, \eqref{eq:2ndtermconclusion}
and \eqref{eq:3rdtermconclusion} below. 

Since $(uv)^{(j+1)}=(uv_{x}+u_{x}v)^{(j)}$, we have $\langle v^{(j)},(uv)^{(j+1)}\rangle_{2}=J_{1}+J_{2}+J_{3}$,
where
\begin{align*}
 & J_{1}=\langle v^{(j)},u^{(j+1)}v\rangle_{2},\\
 & J_{2}=\langle v^{(j)},uv^{(j+1)}\rangle_{2},\\
 & J_{3}=\sum_{\ell=1}^{j}\binom{j}{\ell}\langle v^{(j)},u^{(\ell)}v^{(j-\ell+1)}+u^{(j-\ell+1)}v^{(\ell)}\rangle_{2}.
\end{align*}
Notice that $J_{3}=0$ if $j=0$.

\subsubsection{First term}

We decompose $J_{1}$ into a sum of $L^{2}$ inner products by $J_{1}=J_{10}+2J_{11}+J_{12}$,
where $J_{1i}=\langle v^{(j+i)},\partial_{x}^{i}(u^{(j+1)}v)\rangle_{0}$.
We have 
\begin{align}
|J_{10}| & \le\|v^{(j)}\|_{0}\|u^{(j+1)}v\|_{0}\label{eq:J10}\\
 & \le\|v^{(j)}\|_{0}\cdot2\|v\|_{1}\|u^{(j+1)}\|_{0}\le\|v\|_{1}\cdot2\|u^{(j+1)}\|_{2}\|v^{(j)}\|_{2}\nonumber \\
 & \le\|v\|_{1}\left[\frac{1}{j+1}\|u^{(j+1)}\|_{2}^{2}+(1+j)\|v^{(j)}\|_{2}^{2}\right].\nonumber 
\end{align}
The right-hand side is similar to that of \eqref{eq:Ii}. As for $J_{11}$,
we have 
\begin{align}
|J_{11}| & \le\|v^{(j+1)}\|_{0}\left(\|u^{(j+1)}v_{x}\|_{0}+\|u^{(j+2)}v\|_{0}\right)\label{eq:J11}\\
 & \le\|v^{(j+1)}\|_{0}\left(2\|u^{(j+1)}\|_{0}\|v_{x}\|_{1}+2\|u^{(j+2)}\|_{0}\|v\|_{1}\right)\nonumber \\
 & \le4\|v^{(j)}\|_{2}\|u^{(j+1)}\|_{2}\|v\|_{2}\nonumber \\
 & \le2\|v\|_{2}\left[\frac{1}{j+1}\|u^{(j+1)}\|_{2}^{2}+(1+j)\|v^{(j)}\|_{2}^{2}\right].\nonumber 
\end{align}
We further decompose $J_{12}=\langle v^{(j+2)},\partial_{x}^{2}(u^{(j+1)}v)\rangle_{0}$
by $J_{12}=J_{120}+2J_{121}+J_{122}$, where 
\begin{align*}
 & J_{12i}=\langle v^{(j+2)},u^{(j+3-i)}v^{(i)}\rangle_{0}.
\end{align*}
We have 
\begin{align}
|J_{12i}| & \le\|v^{(j+2)}\|_{0}\|u^{(j+3-i)}v^{(i)}\|_{0}\le2\|v^{(j+2)}\|_{0}\|u^{(j+3-i)}\|_{0}\|v^{(i)}\|_{1}\nonumber \\
 & =2\|v^{(j)}\|_{2}\|u^{(j+1)}\|_{2}\|v\|_{3}\nonumber \\
 & \le\|v\|_{3}\left[\frac{1}{j+1}\|u^{(j+1)}\|_{2}^{2}+(1+j)\|v^{(j)}\|_{2}^{2}\right],\nonumber \\
|J_{12}| & \le4\|v\|_{3}\left[\frac{1}{j+1}\|u^{(j+1)}\|_{2}^{2}+(1+j)\|v^{(j)}\|_{2}^{2}\right].\label{eq:J12}
\end{align}
By combining \eqref{eq:J10}, \eqref{eq:J11} and \eqref{eq:J12},
we obtain
\[
|J_{1}|\le9\|v\|_{3}\left[\frac{1}{j+1}\|u^{(j+1)}\|_{2}^{2}+(1+j)\|v^{(j)}\|_{2}^{2}\right].
\]
Following \eqref{eq:v j u j+1} and \eqref{eq:subsection7conclusion},
we get
\begin{equation}
\left|\sum_{j=0}^{m}\frac{e^{2j\sigma}}{j!^{2}}J_{1}\right|\le9\|v\|_{3}\left[2\Phi_{\sigma,m}(u,v)+\partial_{\sigma}\Phi_{\sigma,m}(u,v)\right].\label{eq:1sttermconclusion}
\end{equation}

\subsubsection{Second term}

We decompose $J_{2}$ into a sum of $L^{2}$ inner products by $J_{2}=J_{20}+2J_{21}+J_{22}$,
where $J_{2i}=\langle v^{(j+i)},\partial_{x}^{i}(uv^{(j+1)})\rangle_{0}$.
We have
\begin{align*}
|J_{20}| & \le\|v^{(j)}\|_{0}\|uv^{(j+1)}\|_{0}\le2\|v^{(j)}\|_{0}\|u\|_{1}\|v^{(j+1)}\|_{0}\\
 & \le2\|u\|_{1}\|v^{(j)}\|_{2}^{2},\\
|J_{21}| & \le\|v^{(j+1)}\|_{0}\left(\|u_{x}v^{(j+1)}\|_{0}+\|uv^{(j+2)}\|_{0}\right)\\
 & \le\|v^{(j+1)}\|_{0}\left(2\|u_{x}\|_{1}\|v^{(j+1)}\|_{0}+2\|u\|_{1}\|v^{(j+2)}\|_{0}\right)\le4\|u\|_{2}\|v^{(j)}\|_{2}^{2}.
\end{align*}
Therefore
\begin{align*}
\left|\sum_{j=0}^{m}\frac{e^{2j\sigma}}{j!^{2}}J_{20}\right| & \le4\|u\|_{1}\Phi_{\sigma,m}^{(2)}(v)\le4\|u\|_{1}\Phi_{\sigma,m}(u,v),\\
\left|\sum_{j=0}^{m}\frac{e^{2j\sigma}}{j!^{2}}J_{21}\right| & \le8\|u\|_{2}\Phi_{\sigma,m}^{(2)}(v)\le8\|u\|_{2}\Phi_{\sigma,m}(u,v).
\end{align*}
We further decompose $J_{22}=\langle v^{(j+2)},\partial_{x}^{2}(uv^{(j+1)})\rangle_{0}$
by $J_{22}=J_{220}+2J_{221}+J_{222}$, where 
\[
J_{22i}=\langle v^{(j+2)},u^{(2-i)}v^{(j+i+1)}\rangle_{0}.
\]
For $i=0,1$, we have 
\begin{align*}
|J_{22i}| & \le\|v^{(j+2)}\|_{0}\|u^{(2-i)}v^{(j+i+1)}\|_{0}\le2\|v^{(j+2)}\|_{0}\|u^{(2-i)}\|_{1}\|v^{(j+i+1)}\|_{0}\\
 & \le2\|u\|_{3}\|v^{(j)}\|_{2}^{2}.
\end{align*}
We get an alternative expression of $J_{222}=\langle v^{(j+2)},uv^{(j+3)}\rangle_{0}$
by integration by parts. We have
\begin{align*}
J_{222} & =\int_{\mathbb{R}}\frac{1}{2}u\partial_{x}(v^{(j+2)})^{2}\,dx=-\frac{1}{2}\int_{\mathbb{R}}u_{x}(v^{(j+2)})^{2}\,dx=-\frac{1}{2}\langle u_{x}v^{(j+2)},v^{(j+2)}\rangle_{0}.
\end{align*}
Therefore
\begin{align*}
|J_{222}| & \le\frac{1}{2}\|u_{x}v^{(j+2)}\|_{0}\|v^{(j+2)}\|_{0}\le\|u_{x}\|_{1}\|v^{(j+2)}\|_{0}^{2}\le\|u\|_{2}\|v^{(j)}\|_{2}^{2}.
\end{align*}
Summing up, we get
\begin{align*}
\left|\sum_{j=0}^{m}\frac{e^{2j\sigma}}{j!^{2}}J_{22}\right| & \le\sum_{j=0}^{m}\frac{e^{2j\sigma}}{j!^{2}}\left(|J_{220}|+2|J_{221}|+|J_{222}|\right)\\
 & \le14\|u\|_{3}\Phi_{\sigma,m}^{(2)}(v)\le14\|u\|_{3}\Phi_{\sigma,m}(u,v).
\end{align*}
Finally we obtain
\begin{equation}
\left|\sum_{j=0}^{m}\frac{e^{2j\sigma}}{j!^{2}}J_{2}\right|\le34\|u\|_{3}\Phi_{\sigma,m}(u,v).\label{eq:2ndtermconclusion}
\end{equation}

\subsubsection{Third term}

Recall 
\[
J_{3}=\sum_{\ell=1}^{j}\binom{j}{\ell}\langle v^{(j)},u^{(\ell)}v^{(j-\ell+1)}+u^{(j-\ell+1)}v^{(\ell)}\rangle_{2}\;(1\le j\le m)
\]
and that $J_{3}=0$ if $j=0$. We assume $j\ge1$. We have
\begin{align*}
 & J_{3}=K_{j}+L_{j},\,K_{j}=K_{j1}+K_{j2},\,L_{j}=L_{j1}+L_{j2},\\
 & K_{j1}=j\langle v^{(j)},u_{x}v^{(j)}\rangle_{2},\;K_{j2}=\sum_{\ell=2}^{j}\binom{j}{\ell}\langle v^{(j)},u^{(\ell)}v^{(j-\ell+1)}\rangle_{2},\\
 & L_{j1}=j\langle v^{(j)},u^{(j)}v_{x}\rangle_{2},\;L_{j2}=\sum_{\ell=2}^{j}\binom{j}{\ell}\langle v^{(j)},u^{(j-\ell+1)}v^{(\ell)}\rangle_{2}.
\end{align*}
If $j=1$, then $K_{j2}=L_{j2}=0$.

For $1\le j\le m$, we have
\begin{align*}
|K_{j1}| & \le8j\|v^{(j)}\|_{2}\|u_{x}\|_{2}\|v^{(j)}\|_{2}\le8\|u\|_{3}\cdot j\|v^{(j)}\|_{2}^{2},\\
|L_{j1}| & \le8j\|v^{(j)}\|_{2}\|u^{(j)}\|_{2}\|v_{x}\|_{2}\le4\|v\|_{3}\left(j\|u^{(j)}\|_{2}^{2}+j\|v^{(j)}\|_{2}^{2}\right),
\end{align*}
and 
\begin{align}
\left|\sum_{j=1}^{m}\frac{e^{2\sigma j}}{j!^{2}}K_{j1}\right|\le & 8\|u\|_{3}\partial_{\sigma}\Phi_{\sigma,m}^{(2)}(v)\le8\|u\|_{3}\partial_{\sigma}\Phi_{\sigma,m}(u,v),\allowdisplaybreaks\label{eq:12201}\\
\left|\sum_{j=1}^{m}\frac{e^{2\sigma j}}{j!^{2}}L_{j1}\right|\le & 8e^{2\sigma}\|v\|_{3}\Phi_{\sigma,m}^{(1)}(u)+4e^{2\sigma}\|v\|_{3}\partial_{\sigma}\Phi_{\sigma,m}^{(1)}(u)\label{eq:12202}\\
 & \quad+4\|v\|_{3}\partial_{\sigma}\Phi_{\sigma,m}^{(2)}(v)\nonumber \\
\le & 8e^{2\sigma}\|v\|_{3}\Phi_{\sigma,m}(u,v)+(4e^{2\sigma}+4)\|v\|_{3}\partial_{\sigma}\Phi_{\sigma,m}(u,v).\nonumber 
\end{align}
For $j\ge2$, we have 
\begin{align}
|K_{j2}|\le & 8\|v^{(j)}\|_{2}\sum_{\ell=2}^{j}\binom{j}{\ell}\left(\|u^{(\ell)}\|_{2}\|v^{(j-\ell+1)}\|_{1}+\|u^{(\ell)}\|_{1}\|v^{(j-\ell+1)}\|_{2}\right)\allowdisplaybreaks\label{eq:12241}\\
\le & 8\sum_{\ell=2}^{j}\binom{j}{\ell}\|u^{(\ell)}\|_{2}\|v^{(j)}\|_{2}\|v^{(j-\ell)}\|_{2}\allowdisplaybreaks\nonumber \\
 & +8\sum_{\ell=2}^{j}\binom{j}{\ell}\|u^{(\ell-1)}\|_{2}\|v^{(j)}\|_{2}\|v^{(j-\ell+1)}\|_{2},\allowdisplaybreaks\nonumber \\
|L_{j2}|\le & 8\|v^{(j)}\|_{2}\sum_{\ell=2}^{j}\binom{j}{\ell}\left(\|u^{(j-\ell+1)}\|_{1}\|v^{(\ell)}\|_{2}+\|u^{(j-\ell+1)}\|_{2}\|v^{(\ell)}\|_{1}\right)\allowdisplaybreaks\label{eq:12242}\\
\le & 8\|u\|_{2}\|v^{(j)}\|_{2}^{2}+8\|u^{(1)}\|_{2}\|v^{(j)}\|_{2}\|v^{(j-1)}\|_{2}\allowdisplaybreaks\nonumber \\
 & +8\sum_{\ell=2}^{j-1}\binom{j}{\ell}\|u^{(j-\ell)}\|_{2}\|v^{(j)}\|_{2}\|v^{(\ell)}\|_{2}\allowdisplaybreaks\nonumber \\
 & +8\sum_{\ell=2}^{j-1}\binom{j}{\ell}\|u^{(j-\ell+1)}\|_{2}\|v^{(j)}\|_{2}\|v^{(\ell-1)}\|_{2}.\nonumber 
\end{align}
Notice that $\sum_{\ell=2}^{j-1}=0$ if $j\le2$ in \eqref{eq:12242}.
As for the first term in the estimate \eqref{eq:12242} of $L_{j2}$,
we have 
\begin{equation}
\sum_{j=2}^{m}\frac{e^{2j\sigma}}{j!^{2}}\|u\|_{2}\|v^{(j)}\|_{2}^{2}\le2\|u\|_{2}\Phi_{\sigma,m}^{(2)}(v).\label{eq:201912191}
\end{equation}
As for the second term, since 
\[
\|u^{(1)}\|_{2}\|v^{(j)}\|_{2}\|v^{(j-1)}\|_{2}\le\frac{\|u^{(1)}\|_{2}}{2}\left(\|v^{(j)}\|_{2}^{2}+\|v^{(j-1)}\|_{2}^{2}\right),
\]
and 
\[
\sum_{j=2}^{m}\frac{e^{2j\sigma}}{j!^{2}}\|u^{(1)}\|_{2}\|v^{(j-1)}\|_{2}^{2}\le e^{2\sigma}\|u\|_{3}\sum_{k=1}^{m-1}\frac{e^{2k\sigma}}{k!^{2}}\|v^{(k)}\|_{2}^{2}\le2e^{2\sigma}\|u\|_{3}\Phi_{\sigma,m}^{(2)}(v),
\]
we have 
\begin{equation}
\sum_{j=2}^{m}\frac{e^{2j\sigma}}{j!^{2}}\|u^{(1)}\|_{2}\|v^{(j)}\|_{2}\|v^{(j-1)}\|_{2}\le(1+e^{2\sigma})\|u\|_{3}\Phi_{\sigma,m}^{(2)}(v).\label{eq:201912192}
\end{equation}
Now we estimate the four sums in \eqref{eq:12241} and \eqref{eq:12242}.
Set $a_{k}=e^{k\sigma}\|u^{(k)}\|_{2}/k!\,(k=1,2,\dots,m+1)$, $b_{k}=e^{k\sigma}\|v^{(k)}\|_{2}/k!\,(k=0,1,\dots,m)$.
Then by Proposition \ref{prop:cauchyschwarz}, we get 
\begin{align*}
 & \sum_{j=2}^{m}\frac{e^{2j\sigma}}{j!^{2}}\sum_{\ell=2}^{j}\binom{j}{\ell}\|u^{(\ell)}\|_{2}\|v^{(j)}\|_{2}\|v^{(j-\ell)}\|_{2}=\sum_{j=2}^{m}\sum_{\ell=2}^{j}a_{\ell}b_{j}b_{j-\ell}\le\frac{\pi}{\sqrt{6}}\tilde{A}B\tilde{B},\allowdisplaybreaks\\
 & \sum_{j=2}^{m}\frac{e^{2j\sigma}}{j!^{2}}\sum_{\ell=2}^{j}\binom{j}{\ell}\|u^{(\ell-1)}\|_{2}\|v^{(j)}\|_{2}\|v^{(j-\ell+1)}\|_{2}\allowdisplaybreaks\\
 & =\sum_{j=2}^{m}\sum_{\ell=2}^{j}\frac{j-\ell+1}{\ell}a_{\ell-1}b_{j}b_{j-\ell+1}\le\frac{\pi}{\sqrt{6}}A\tilde{B}^{2},\allowdisplaybreaks\\
 & \sum_{j=3}^{m}\frac{e^{2j\sigma}}{j!^{2}}\sum_{\ell=2}^{j-1}\binom{j}{\ell}\|u^{(j-\ell)}\|_{2}\|v^{(j)}\|_{2}\|v^{(\ell)}\|_{2}=\sum_{j=0}^{m}\sum_{\ell=2}^{j-1}a_{j-\ell}b_{j}b_{\ell}\le\frac{\pi}{\sqrt{6}}A\tilde{B}^{2},\allowdisplaybreaks\\
 & \sum_{j=3}^{m}\frac{e^{2j\sigma}}{j!^{2}}\sum_{\ell=2}^{j-1}\binom{j}{\ell}\|u^{(j-\ell+1)}\|_{2}\|v^{(j)}\|_{2}\|v^{(\ell-1)}\|_{2}\allowdisplaybreaks\\
 & =\sum_{j=3}^{m}\sum_{\ell=2}^{j-1}\frac{j-\ell+1}{\ell}a_{j-\ell+1}b_{j}b_{\ell-1}\le\frac{\pi}{\sqrt{6}}\tilde{A}B\tilde{B}.
\end{align*}
Therefore by \eqref{eq:12241} , \eqref{eq:12242}, \eqref{eq:201912191},
\eqref{eq:201912192} and Proposition \ref{prop:A and tilde A}, 
\begin{align}
\left|\sum_{j=1}^{m}\frac{e^{2j\sigma}}{j!^{2}}K_{j2}\right|\le & \frac{8\pi}{\sqrt{6}}\left(\tilde{A}B\tilde{B}+A\tilde{B}^{2}\right)\allowdisplaybreaks\label{eq:12203}\\
\le & \frac{8\pi}{\sqrt{6}}(2+\sqrt{2})e^{\sigma}\sqrt{\Phi_{\sigma,m}(u,v)}\partial_{\sigma}\Phi_{\sigma,m}(u,v),\allowdisplaybreaks\nonumber \\
\left|\sum_{j=1}^{m}\frac{e^{2j\sigma}}{j!^{2}}L_{j2}\right|\le & (24+8e^{2\sigma})\|u\|_{3}\Phi_{\sigma,m}(u,v)\allowdisplaybreaks\label{eq:12204}\\
 & +\frac{8\pi}{\sqrt{6}}(2+\sqrt{2})e^{\sigma}\sqrt{\Phi_{\sigma,m}(u,v)}\partial_{\sigma}\Phi_{\sigma,m}(u,v).\nonumber 
\end{align}
Combining \eqref{eq:12201}, \eqref{eq:12202}, \eqref{eq:12203}
and \eqref{eq:12204}, we obtain
\begin{align}
\left|\sum_{j=1}^{m}\frac{e^{2j\sigma}}{j!^{2}}J_{3}\right|\le & \left[(24+8e^{2\sigma})\|u\|_{3}+8e^{2\sigma}\|v\|_{3}\right]\Phi_{\sigma,m}(u,v)\allowdisplaybreaks\label{eq:3rdtermconclusion}\\
 & +\frac{16\pi}{\sqrt{3}}(\sqrt{2}+1)e^{\sigma}\sqrt{\Phi_{\sigma,m}(u,v)}\partial_{\sigma}\Phi_{\sigma,m}(u,v)\allowdisplaybreaks\nonumber \\
 & +\left[8\|u\|_{3}+(4e^{2\sigma}+4)\|v\|_{3}\right]\partial_{\sigma}\Phi_{\sigma,m}(u,v).\nonumber 
\end{align}

\section*{Appendix}
\begin{prop}
\label{prop:cauchyschwarz}For non-negative numbers $a_{j}\,(j=1,\dots,m+1)$,
and $b_{j}\,(j=0,\dots,m)$, set $A=\left(\sum_{j=1}^{m+1}a_{j}^{2}\right)^{1/2}$,
$\tilde{A}=\left(\sum_{j=2}^{m+1}ja_{j}^{2}\right)^{1/2}$, $B=\left(\sum_{j=0}^{m}b_{j}^{2}\right)^{1/2}$,
$\tilde{B}=\left(\sum_{j=1}^{m}jb_{j}^{2}\right)^{1/2}$. Then we
have%
\begin{align}
 & \sum_{j=3}^{m+1}\sum_{\ell=2}^{j-1}a_{j}a_{\ell}a_{j-\ell}\le\frac{\pi}{\sqrt{6}}A\tilde{A}^{2},\allowdisplaybreaks\label{eq:AB1}\\
 & \sum_{j=3}^{m+1}\sum_{\ell=2}^{j-1}\frac{j-\ell+1}{\ell}a_{j}a_{\ell-1}a_{j-\ell+1}\le\frac{\pi}{\sqrt{6}}A\tilde{A}^{2},\allowdisplaybreaks\label{eq:AB2}\\
 & \sum_{j=2}^{m+1}\sum_{\ell=1}^{j-1}\frac{1}{j}a_{j}b_{\ell}b_{j-\ell-1}\le\frac{\pi}{\sqrt{6}}\tilde{A}B\tilde{B,}\allowdisplaybreaks\label{eq:AB3}\\
 & \sum_{j=2}^{m}\sum_{\ell=2}^{j}a_{\ell}b_{j}b_{j-\ell}\le\frac{\pi}{\sqrt{6}}\tilde{A}B\tilde{B,}\allowdisplaybreaks\label{eq:AB4}\\
 & \sum_{j=2}^{m}\sum_{\ell=2}^{j}\frac{j-\ell+1}{\ell}a_{\ell-1}b_{j}b_{j-\ell+1}\le\frac{\pi}{\sqrt{6}}A\tilde{B}^{2},\allowdisplaybreaks\label{eq:AB5}\\
 & \sum_{j=3}^{m}\sum_{\ell=2}^{j-1}a_{j-\ell}b_{j}b_{\ell}\le\frac{\pi}{\sqrt{6}}A\tilde{B}^{2},\allowdisplaybreaks\label{eq:AB6}\\
 & \sum_{j=3}^{m}\sum_{\ell=2}^{j-1}\frac{j-\ell+1}{\ell}a_{j-\ell+1}b_{j}b_{\ell-1}\le\frac{\pi}{\sqrt{6}}\tilde{A}B\tilde{B.}\label{eq:AB7}
\end{align}
\begin{proof}
We have 
\begin{align*}
 & \sum_{j=3}^{m+1}\sum_{\ell=2}^{j-1}a_{j}a_{\ell}a_{j-\ell}=\sum_{\ell=2}^{m}\sum_{j=\ell+1}^{m+1}a_{j}a_{\ell}a_{j-\ell}\le\sum_{\ell=2}^{m}\frac{\sqrt{\ell}}{\ell}a_{\ell}\sum_{j=\ell+1}^{m+1}\sqrt{j}a_{j}\cdot a_{j-\ell}\\
 & \le A\tilde{A}\sum_{\ell=2}^{m}\frac{1}{\ell}\cdot\sqrt{\ell}a_{\ell}\le\frac{\pi}{\sqrt{6}}A\tilde{A}^{2},\allowdisplaybreaks\\
 & \sum_{j=3}^{m+1}\sum_{\ell=2}^{j-1}\frac{j-\ell+1}{\ell}a_{j}a_{\ell-1}a_{j-\ell+1}=\sum_{\ell=2}^{m}\sum_{j=\ell+1}^{m+1}\frac{j-\ell+1}{\ell}a_{j}a_{\ell-1}a_{j-\ell+1}\\
 & \le\sum_{\ell=2}^{m}\frac{a_{\ell-1}}{\ell}\sum_{j=\ell+1}^{m}\sqrt{j}a_{j}\cdot\sqrt{j-\ell+1}a_{j-\ell+1}\le\tilde{A}^{2}\sum_{\ell=2}^{m}\frac{a_{\ell-1}}{\ell}\le\frac{\pi}{\sqrt{6}}A\tilde{A}^{2},\allowdisplaybreaks\\
 & \sum_{j=2}^{m+1}\sum_{\ell=1}^{j-1}\frac{1}{j}a_{j}b_{\ell}b_{j-\ell-1}\le\sum_{j=2}^{m+1}\sum_{\ell=1}^{j-1}\frac{\sqrt{\ell}\sqrt{j}}{\ell}a_{j}b_{\ell}b_{j-\ell-1}\\
 & =\sum_{\ell=1}^{m}\sum_{j=\ell+1}^{m+1}\frac{\sqrt{\ell}\sqrt{j}}{\ell}a_{j}b_{\ell}b_{j-\ell-1}\le\sum_{\ell=1}^{m}\frac{\sqrt{\ell}}{\ell}b_{\ell}\sum_{j=\ell+1}^{m+1}\sqrt{j}a_{j}\cdot b_{j-\ell-1}\le\frac{\pi}{\sqrt{6}}\tilde{A}B\tilde{B,}\allowdisplaybreaks\\
 & \sum_{j=2}^{m}\sum_{\ell=2}^{j}a_{\ell}b_{j}b_{j-\ell}\le\sum_{\ell=2}^{m}\frac{\sqrt{\ell}a_{\ell}}{\ell}\sum_{j=\ell}^{m}\sqrt{j}b_{j}b_{j-\ell}\le\frac{\pi}{\sqrt{6}}\tilde{A}B\tilde{B},\allowdisplaybreaks\\
 & \sum_{j=2}^{m}\sum_{\ell=2}^{j}\frac{j-\ell+1}{\ell}a_{\ell-1}b_{j}b_{j-\ell+1}\le\sum_{\ell=2}^{m}\frac{a_{\ell-1}}{\ell}\sum_{j=\ell}^{m}\sqrt{j}b_{j}\cdot\sqrt{j-\ell+1}b_{j-\ell+1}\le\frac{\pi}{\sqrt{6}}A\tilde{B}^{2},\allowdisplaybreaks\\
 & \sum_{j=3}^{m}\sum_{\ell=2}^{j-1}a_{j-\ell}b_{j}b_{\ell}\le\sum_{\ell=2}^{m-1}\frac{\sqrt{\ell}b_{\ell}}{\ell}\sum_{j=\ell+1}^{m}\sqrt{j}b_{j}\cdot a_{j-\ell}\le\frac{\pi}{\sqrt{6}}A\tilde{B}^{2},\allowdisplaybreaks\\
 & \sum_{j=3}^{m}\sum_{\ell=2}^{j-1}\frac{j-\ell+1}{\ell}a_{j-\ell+1}b_{j}b_{\ell-1}\le\sum_{\ell=2}^{m-1}\frac{b_{\ell-1}}{\ell}\sum_{j=\ell+1}^{m}\sqrt{j}b_{j}\cdot\sqrt{j-\ell+1}a_{j-\ell+1}\allowdisplaybreaks\\
 & \le\frac{\pi}{\sqrt{6}}\tilde{A}B\tilde{B}.\qedhere
\end{align*}
\end{proof}
\end{prop}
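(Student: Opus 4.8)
The plan is to prove all seven inequalities \eqref{eq:AB1}--\eqref{eq:AB7} by the same device, the one used for the analogous estimates in \cite{HM} and \cite{BHPglobal}: in each double sum reverse the order of summation so that the ``diagonal'' index $\ell$ becomes the outer variable, redistribute factors of $\sqrt{\cdot}$ (legitimate because $\ell<j$ on the summation domain) so that one factor becomes $\ell^{-1/2}(\cdot)$ or $\ell^{-1}(\cdot)$, and then apply the Cauchy--Schwarz inequality twice --- once to the inner sum over $j$ and once to the residual outer sum over $\ell$ --- invoking $\sum_{\ell\ge1}\ell^{-2}=\pi^{2}/6$, equivalently $(\sum_{\ell\ge1}\ell^{-2})^{1/2}=\pi/\sqrt6$, which is where the constant $\pi/\sqrt6$ comes from.

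For \eqref{eq:AB1} I would reverse the order of summation to obtain $\sum_{\ell=2}^{m}a_{\ell}\sum_{j=\ell+1}^{m+1}a_{j}a_{j-\ell}$, then, using $\ell<j$, multiply the summand by $\sqrt{j/\ell}\ge1$ and regroup it as $(\ell^{-1/2}a_{\ell})(\sqrt{j}\,a_{j})\,a_{j-\ell}$. Cauchy--Schwarz applied to the inner sum gives $\sum_{j}(\sqrt{j}\,a_{j})\,a_{j-\ell}\le\tilde A\,A$ (after reindexing, $\sum_{j}a_{j-\ell}^{2}\le A^{2}$ and $\sum_{j}ja_{j}^{2}\le\tilde A^{2}$), and what is left, $\tilde A\,A\sum_{\ell}\ell^{-1}(\sqrt{\ell}\,a_{\ell})$, is $\le\tilde A\,A\cdot(\pi/\sqrt6)\tilde A$, the asserted bound. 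Inequality \eqref{eq:AB2} is the same except for the weight $(j-\ell+1)/\ell$: I would split it as $\ell^{-1}\cdot(j-\ell+1)$, distribute one $\sqrt{j-\ell+1}$ onto $a_{j}$ and one onto $a_{j-\ell+1}$, and use $\sqrt{j-\ell+1}\le\sqrt{j}$; the inner Cauchy--Schwarz then produces $\tilde A^{2}$ and the outer sum $\sum_{\ell}\ell^{-1}a_{\ell-1}\le(\pi/\sqrt6)A$. The five mixed inequalities \eqref{eq:AB3}--\eqref{eq:AB7} have exactly this shape with some of the $a$'s replaced by $b$'s and with the role of $\tilde A$ sometimes played by $\tilde B$; for each of them I would locate the factor that carries the diagonal index $\ell$, peel off $\ell^{-1}$ from it (when it is to be paired directly against $\sum\ell^{-2}$) or $\ell^{-1/2}$ from it (when it should instead feed $\tilde A$ or $\tilde B$), sometimes after first bounding $1/j$ by the larger $\sqrt{j}/\sqrt{\ell}$ as in \eqref{eq:AB3}, and then run the two Cauchy--Schwarz steps.

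The only real obstacle is the bookkeeping of index ranges, which has to be checked separately for each of the seven sums: $\tilde A$ runs from $j=2$ whereas $\tilde B$ runs from $j=1$, and after swapping the order of summation one must verify that the outer variable $\ell$ ranges over a set on which $\ell^{-1}$ can be squared and summed, and, in the terms involving $(j-\ell+1)/\ell$, that one indeed has $2\le j-\ell+1\le j$ (resp.\ $1\le j-\ell+1\le j$) so that the relabelled sums fall inside the definitions of $\tilde A$ and $\tilde B$. None of this is deep, but it is the part one has to carry out carefully; once the endpoints match, two applications of Cauchy--Schwarz together with $\sum_{\ell\ge1}\ell^{-2}=\pi^{2}/6$ close every case.
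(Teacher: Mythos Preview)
Your proposal is correct and matches the paper's own proof essentially step for step: reverse the order of summation so that $\ell$ is the outer variable, insert the harmless factor $\sqrt{j/\ell}\ge 1$ (or use $j-\ell+1\le j$) to manufacture the weights $\sqrt{j}\,a_{j}$, $\sqrt{j}\,b_{j}$, etc., apply Cauchy--Schwarz to the inner sum, and then apply it once more to the outer sum against $\bigl(\sum_{\ell}\ell^{-2}\bigr)^{1/2}=\pi/\sqrt{6}$. The only thing left implicit in your write-up is the explicit verification of the index ranges in each of the seven cases, which the paper also leaves to the reader after exhibiting the one-line chain for each inequality.
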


\,
\begin{prop}
\label{prop:A and tilde A}If $a_{k}=e^{k\sigma}\|u^{(k)}\|_{2}/k!$
$(k=1,2,\dots,m+1)$, $b_{k}=e^{k\sigma}\|v^{(k)}\|_{2}/k!$ $(k=0,1,\dots,m)$,
and $A,\tilde{A,}B,\tilde{B}$ are defined as in Proposition \ref{prop:cauchyschwarz},
then 
\begin{align*}
 & e^{-2\sigma}A\tilde{A}^{2}\le2\sqrt{2}e^{\sigma}\sqrt{\Phi_{\sigma,m}^{(1)}(u)}\,\partial_{\sigma}\Phi_{\sigma,m}^{(1)}(u)\\
 & \tilde{A}B\tilde{B}\le2e^{\sigma}\sqrt{\Phi_{\sigma,m}^{(2)}(v)}\,\sqrt{\partial_{\sigma}\Phi_{\sigma,m}^{(1)}(u)\cdot\partial_{\sigma}\Phi_{\sigma,m}^{(2)}(v)},\\
 & A\tilde{B}^{2}=\sqrt{2}e^{\sigma}\sqrt{\Phi_{\sigma,m}^{(1)}(u)}\,\partial_{\sigma}\Phi_{\sigma,m}^{(2)}(v).
\end{align*}
\end{prop}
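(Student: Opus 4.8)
The plan is purely computational: express each of $A$, $\tilde A$, $B$, $\tilde B$ in terms of $\Phi_{\sigma,m}^{(1)}(u)$, $\Phi_{\sigma,m}^{(2)}(v)$ and their $\sigma$-derivatives, and then read off the three claims. Recall from the definitions that $\partial_\sigma\Phi_{\sigma,m}^{(1)}(u)=\sum_{j=2}^{m+1}(j-1)\,j!^{-2}e^{2(j-1)\sigma}\|u^{(j)}\|_2^2$ and $\partial_\sigma\Phi_{\sigma,m}^{(2)}(v)=\sum_{j=1}^{m}j\,j!^{-2}e^{2j\sigma}\|v^{(j)}\|_2^2$.

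First I would record the exact identities obtained by substituting $a_j=e^{j\sigma}\|u^{(j)}\|_2/j!$ and $b_j=e^{j\sigma}\|v^{(j)}\|_2/j!$. Pulling a factor $e^{2\sigma}$ out of $A^2=\sum_{j=1}^{m+1}a_j^2$ gives $A^2=e^{2\sigma}\sum_{j=1}^{m+1}j!^{-2}e^{2(j-1)\sigma}\|u^{(j)}\|_2^2=2e^{2\sigma}\Phi_{\sigma,m}^{(1)}(u)$, hence $A=\sqrt2\,e^{\sigma}\sqrt{\Phi_{\sigma,m}^{(1)}(u)}$. Similarly $B^2=\sum_{j=0}^{m}b_j^2=2\Phi_{\sigma,m}^{(2)}(v)$ and $\tilde B^2=\sum_{j=1}^{m}j\,b_j^2=\partial_\sigma\Phi_{\sigma,m}^{(2)}(v)$, so $B=\sqrt2\sqrt{\Phi_{\sigma,m}^{(2)}(v)}$ and $\tilde B=\sqrt{\partial_\sigma\Phi_{\sigma,m}^{(2)}(v)}$. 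The third assertion is then an identity: multiplying the formulas for $A$ and $\tilde B^2$ gives $A\tilde B^2=\sqrt2\,e^{\sigma}\sqrt{\Phi_{\sigma,m}^{(1)}(u)}\cdot\partial_\sigma\Phi_{\sigma,m}^{(2)}(v)$.

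The only place an inequality (as opposed to an identity) appears is in $\tilde A$. Here $\tilde A^2=\sum_{j=2}^{m+1}j\,a_j^2=e^{2\sigma}\sum_{j=2}^{m+1}j\,j!^{-2}e^{2(j-1)\sigma}\|u^{(j)}\|_2^2$, whose weight is $j$ rather than the $j-1$ that occurs in $\partial_\sigma\Phi_{\sigma,m}^{(1)}(u)$. The elementary bound $j\le2(j-1)$, valid for $j\ge2$, gives $\tilde A^2\le2e^{2\sigma}\partial_\sigma\Phi_{\sigma,m}^{(1)}(u)$, that is, $\tilde A\le\sqrt2\,e^{\sigma}\sqrt{\partial_\sigma\Phi_{\sigma,m}^{(1)}(u)}$. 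Combining this with the formula for $A$ yields $e^{-2\sigma}A\tilde A^2\le2\sqrt2\,e^{\sigma}\sqrt{\Phi_{\sigma,m}^{(1)}(u)}\,\partial_\sigma\Phi_{\sigma,m}^{(1)}(u)$, the first claim; multiplying the bound for $\tilde A$ by $B$ and $\tilde B$ yields $\tilde AB\tilde B\le2e^{\sigma}\sqrt{\Phi_{\sigma,m}^{(2)}(v)}\sqrt{\partial_\sigma\Phi_{\sigma,m}^{(1)}(u)\,\partial_\sigma\Phi_{\sigma,m}^{(2)}(v)}$, the second.

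There is no real obstacle: this is a bookkeeping lemma that repackages $A,\tilde A,B,\tilde B$ in the language of $\Phi_{\sigma,m}$ so that Proposition~\ref{prop:cauchyschwarz} can be fed into the estimates of Sections~\ref{sec:estimates1} and~\ref{sec:estimates2}. The single point deserving a moment's attention is the off-by-one between the weight $j$ in $\tilde A^2$ and the weight $j-1$ in $\partial_\sigma\Phi_{\sigma,m}^{(1)}$, which is absorbed by the factor $2$ coming from $j\le2(j-1)$; this is precisely why the constant in the first inequality is $2\sqrt2$ rather than $\sqrt2$, whereas the last relation, involving only $\tilde B$, remains an exact equality.
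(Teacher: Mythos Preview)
Your proof is correct and follows essentially the same approach as the paper: both compute $e^{-2\sigma}A^{2}=2\Phi_{\sigma,m}^{(1)}(u)$, $B^{2}=2\Phi_{\sigma,m}^{(2)}(v)$, $\tilde B^{2}=\partial_\sigma\Phi_{\sigma,m}^{(2)}(v)$ exactly, and bound $e^{-2\sigma}\tilde A^{2}\le 2\partial_\sigma\Phi_{\sigma,m}^{(1)}(u)$ via $j\le 2(j-1)$ for $j\ge 2$, then multiply. Your additional remark that the third relation is an exact equality while the first two are genuine inequalities (because of the off-by-one in the weight of $\tilde A^{2}$) is a helpful clarification not spelled out in the paper.
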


\begin{proof}
We have 
\begin{align*}
 & e^{-2\sigma}A^{2}=2\Phi_{\sigma,m}^{(1)}(u),\\
 & e^{-2\sigma}\tilde{A}^{2}=\sum_{k=2}^{m+1}\frac{ke^{2(k-1)\sigma}}{k!^{2}}\|u^{(k)}\|_{2}^{2}\le\sum_{k=2}^{m+1}\frac{2(k-1)e^{2(k-1)\sigma}}{k!^{2}}\|u^{(k)}\|_{2}^{2}=2\partial_{\sigma}\Phi_{\sigma,m}^{(1)}(u).
\end{align*}
 Moreover, we have
\[
B^{2}=2\Phi_{\sigma,m}^{(2)}(v),\;\tilde{B}^{2}=\partial_{\sigma}\Phi_{\sigma,m}^{(2)}(v).\qedhere
\]
\end{proof}

\end{document}